\def \S {\mathbf{S}}
\def \Y {\mathcal{Y}}
\def \R {\mathbb{R}}
\def \w {\mathbf{w}}
\def \v {\mathbf{v}}
\def \x {\mathbf{x}}
\def \E {\mathbb{E}}
\def \x {\mathbf{x}}
\def \p {\mathbf{p}}
\def \a {\mathbf{a}}
\def \e {\mathbf{e}}
\def \d {\mathbf{d}}
\def \1 {\mathbf{1}} 
\def \z {\mathbf{z}}
\def \y {\mathbf{y}}
\def \u {\mathbf{u}}
\def \I {\mathbf{I}}
\def \P {\mathcal{P}}
\def \Q {\mathcal{Q}}
\def \D {\mathcal{D}}
\newcommand{\required}[1]{\section*{\hfil #1\hfil}}
\newtheorem{thm}{Theorem}
\newtheorem{ass}{Assumption}
\begin{document}

\title{Randomized Stochastic Variance-Reduced  Methods for Multi-Task  Stochastic Bilevel Optimization}

\author{\name{Zhishuai Guo$^\dagger$}\email{zhishuai-guo@uiowa.edu}\\ 
\name{Quanqi Hu$^\dagger$}\email{quanqi-hu@uiowa.edu}\\ 
\name{Lijun Zhang$^\ddagger$}\email{zhanglj@lamda.nju.edu.cn}\\ 
 \name{Tianbao Yang$^\dagger$} \email{tianbao-yang@uiowa.edu}\\
\addr$^\dagger$Department of Management  Sciences,
   The University of Iowa, Iowa City, IA 52242 \\  
\addr$^\ddagger$ Nanjing University, Nanjing 210023, China\\
}

\maketitle

\begin{abstract} 
In this paper, we consider non-convex stochastic bilevel optimization (SBO) problems that have many applications in machine learning. Although numerous  studies have proposed stochastic algorithms for solving these problems, they are limited in two perspectives: (i) their sample complexities are high, which do not match the state-of-the-art result for non-convex stochastic optimization; (ii) their algorithms are tailored to problems with only one lower-level problem. When there are many lower-level problems, it could be prohibitive to process all these lower-level problems at each iteration. To address these limitations, this paper proposes fast randomized stochastic algorithms for non-convex SBO problems. First, we present a stochastic method for non-convex SBO with only one lower problem and establish its sample complexity of $O(1/\epsilon^3)$ for finding an $\epsilon$-stationary point under Lipschitz continuous conditions of stochastic oracles, matching the lower bound for stochastic smooth non-convex optimization. Second, we present a randomized stochastic method for non-convex SBO with $m>1$ lower level problems (multi-task SBO) by processing a constant number of lower problems at each iteration,  and establish its sample complexity no worse than  $O(m/\epsilon^3)$, which could be a better complexity than that of simply processing all $m$ lower problems at each iteration. Lastly, we  establish even faster convergence results for gradient-dominant functions.   To the best of our knowledge, this is the first work considering multi-task SBO  and developing state-of-the-art sample complexity results.
\end{abstract}

\section{Introduction}
A stochastic bilevel optimization (SBO) problem is formulated as: 
\begin{equation}\label{eqn:sbo}
\begin{aligned}
&\min_{\x\in\R^d} F(\x) = f(\x, \y^*(\x)) = \E_{\xi}[f(\x, \y^*(\x); \xi)],\quad\text{ (upper)}\\
&\y^*(\x) \in \arg\min_{\y\in \R^{d'}} g(\x, \y)  =\E_{\zeta}[g(\x, \y; \zeta)], \hspace*{0.12in}\quad\quad\text{ (lower)}
\end{aligned}
\end{equation}
where $f$ and $g$ are continuously differentiable functions, $\xi$ and $\zeta$ are random variables. This problem has many  applications in machine learning, e.g.,  meta-learning~\citep{DBLP:conf/nips/RajeswaranFKL19}, neural architecture search~\citep{liu2018darts}, reinforcement learning~\citep{journals/siamco/KondaB99}, hyperparameter optimization~\citep{pmlr-v80-franceschi18a}, etc. 

Besides the above problem,  we  consider a more general SBO problem with many lower problems in the following form: 
\begin{equation}\label{eqn:msbo}
\begin{aligned}
&\min_{\x\in \R^d} F(\x) = \frac{1}{m}\sum_{i=1}^mf_i(\x, \y_i^*(\x)) = \frac{1}{m}\sum_{i=1}^m\E_{\xi\sim \mathcal P_i}[f_i(\x, \y_i^*(\x); \xi)],\quad\text{ (upper)}\\
&\y_i^*(\x) \in \arg\min_{\y_i\in \R^{d_i}} g_i(\x, \y_i)  =\E_{\zeta\sim \Q_i}[g_i(\x, \y_i; \zeta)], i=1, \ldots, m, \hspace*{0.04in}\quad\quad\text{ (lower)}
\end{aligned}
\end{equation}
where $m\gg 1$. We refer to the above problem as \textbf{multi-task stochastic bilevel optimization}. One challenge of dealing with the problem~(\ref{eqn:msbo}) compared with the problem~(\ref{eqn:sbo}) is that processing $m$ lower problems at each iteration could be prohibitive when $m$ is large.

\begin{table*}[t] 
	\caption{Sample Complexity of Different Algorithms for solving~(\ref{eqn:msbo}), where $C(\p) = \min_{\p\in\Delta}\sum_{i=1}^m C_i /p_i$ with $C_i$ being a constant related to $f_i(\x, y)$ and $g_i(\x, \y)$, $\Delta=\{\p\in\R^m; \sum_ip_i=1, p_i\geq 0\}$. \# of Tasks per-iteartion denotes that how many lower problems are accessed per-iteration. 
}\label{tab:1} 
	\centering
	\label{tab:2}
	\scalebox{1}{\begin{tabular}{llccc}
			\toprule
			&batch size&\# of loops& Sample Complexity&	\# of Tasks per-iteration\\
		RSVRB& $O(1)$&Single&$O(C(\p)/(m\epsilon^3))$&$O(1)$\\
		STABLE~\cite{DBLP:journals/corr/abs-2102-04671}&$O(1)$&Single&$O(m/\epsilon^4)$&$O(m)$\\
		TTSA ~\cite{DBLP:journals/corr/abs-2007-05170}&  $O(1)$&Single&$O(m/\epsilon^5)$&$O(m)$\\
		StocBio~\cite{DBLP:journals/corr/abs-2010-07962}&$O(1/\epsilon^2)$& Double&$O(m/\epsilon^4)$&$O(m)$ \\
		BSA~\cite{99401}&$O(1)$& Double&$O(m/\epsilon^6)$ &$O(m)$\\
		\bottomrule
	\end{tabular}}
	\vspace*{-0.15in} 
\end{table*}

\paragraph{Related Work.} Although bilevel optimization has a long history in the literature~\citep{Colson07anoverview,doi:10.1080/10556780802102586,Kunisch13abilevel,pmlr-v119-liu20l,doi:10.1137/16M105592X,Shaban2019TruncatedBF}, non-asymptotic convergence was not established until recently~\cite{99401,DBLP:journals/corr/abs-2010-07962,DBLP:journals/corr/abs-2007-05170,DBLP:journals/corr/abs-2102-04671}. \cite{99401} proposes a double-loop stochastic algorithm, where the inner loop solves the lower level problem up to a certain accuracy level. For non-convex $f$ and strongly convex $g$, their algorithm's sample complexity is $O(1/\epsilon^6)$ for finding an $\epsilon$-stationary point of the objective function $F$, i.e., a solution $\x$ satisfying $\E[\|\nabla F(\x)\|]\leq \epsilon$. \cite{DBLP:journals/corr/abs-2010-07962} improves the sample complexity of a double-loop algorithm to $\widetilde O(1/\epsilon^4)$ by using a large mini-batch size at each iteration. \cite{DBLP:journals/corr/abs-2007-05170} proposes a single-loop two timescale algorithm TTSA, which suffers from a worse complexity of $O(1/\epsilon^5)$ for  finding an $\epsilon$-stationary point.  Recently, \cite{DBLP:journals/corr/abs-2102-04671} proposes  a single-loop single timescale algorithm (STABLE) based on a recursive variance-reduced estimator of the second-order Jacobian $\nabla_{xy}^2 g(\x, \y)$ and Hessian matrix $\nabla_{yy}^2\nabla g(\x, \y)$, which enjoys a sample complexity of $O(1/\epsilon^4)$ without using a large mini-batch size at each iteration. 
However, these  studies  focus  on the formulation~(\ref{eqn:sbo}) and do not explicitly consider the challenge for dealing with~(\ref{eqn:msbo}) with many lower level problems.  Hence, the existing results are limited in two perspectives: (i) their sample complexities are high, which do not match the state-of-the-art result for non-convex stochastic optimization, i.e., $O(1/\epsilon^3)$~\cite{DBLP:journals/corr/abs-1807-01695,cutkosky2019momentum}; (ii) their algorithms are tailored to problems with only one lower-level problem and are not appropriate for solving~(\ref{eqn:msbo}) with $m\gg 1$.

\textbf{Our Contributions.} In this paper, we further improve the sample complexity of optimization algorithms for solving SBO problems. First, we present a stochastic method for non-convex SBO with only one lower problem and establish a  sample complexity of $O(1/\epsilon^3)$ for finding an $\epsilon$-stationary point under Lipschitz continuous conditions of stochastic oracles, matching a lower bound for stochastic smooth non-convex optimization~\cite{arjevani2019lower}. Second, we present a randomized stochastic method (RSVRB) for non-convex multi-task SBO with $m>1$ lower level problems by processing only a constant number of lower problems at each iteration,  and establish its sample complexity no worse than  $O(m/\epsilon^3)$, which could have a better complexity than simply processing all $m$ lower problems at each iteration. Lastly, we establish even faster convergence rates in the order of $O(1/\epsilon)$ for gradient-dominant functions.   To the best of our knowledge, this is the first work considering SBO with many lower level problems and establishing state-of-the-art sample complexity. We present the comparison between our results and existing results in Table~\ref{tab:1}.


\paragraph{Applications \& Significance.} Although this paper focuses on theoretical developments,  the proposed algorithms have broad applications in machine learning. We name some of these applications below and discuss the significance of our results. 

{\bf Meta Learning.} An important application of SBO with many lower level problems is model-agnostic meta learning (MAML) with many tasks~\citep{DBLP:conf/nips/RajeswaranFKL19}, which can be formulated as following: 
\begin{align*}
&\min_{\x\in \R^d}\sum_{i=1}^mf_i(\x+ \y_i^*(\x))\\
&\y_i^*(\x)\in\arg\min_{\y_i} g_i(\x + \y_i) + \frac{\lambda}{2}\|\y_i\|^2, i = 1, \ldots, m,
\end{align*}
where $f_i$ corresponds to the (expectation or average) loss function for the $i$-th task, and  $\y_i^*(\x)$ denotes the personalized model increment for the $i$-th task by minimizing the lower problem. Existing algorithms~\cite{99401,DBLP:journals/corr/abs-2010-07962,DBLP:journals/corr/abs-2007-05170,DBLP:journals/corr/abs-2102-04671,DBLP:conf/nips/RajeswaranFKL19} for bilevel optimization do not consider the scenario that $m$ is very large. Hence, these algorithms require processing $m$ tasks at each iteration. In contrast, the proposed RSVRB only needs to process one task per-iteration and only requires a constant mini-batch size and enjoys a smaller sample complexity under appropriate conditions. This also improves existing stochastic algorithms and their theory for MAML~\cite{fallah2020convergence,DBLP:conf/nips/HuZCH20}, which require processing a large number of tasks or a large mini-batch size per-iteration and suffer from worse complexities with the best complexity of $O(1/\epsilon^5)$ under similar conditions as ours~\cite{DBLP:conf/nips/HuZCH20}. 

{\bf Optimizing a Sum of Stochastic Compositional Functions.} 
Existing literature has considered the following compositional optimization problem: 
\begin{align}\label{eqn:comp}
\min_{\x\in\R^d}f(g(\x)) =  \E_\xi[f(\E_{\zeta}[g(\x; \zeta)]; \xi)], 
\end{align}
which is a special case of SBO, i.e., 
\begin{align*}
&\min_{\x\in\R^d}f(\y^*(\x)) = \E[f(\y^*(\x); \xi)]\\
& \y^*(\x)=\arg\min  - \y^{\top}\E_{\zeta}[g(\x; \zeta)] + \frac{1}{2}\|\y\|^2.
\end{align*}
Although the two-level stochastic compositional optimization problem~(\ref{eqn:comp}) has been studied extensively in the literature~\cite{wang2017stochastic,ghadimi2020single,zhang2019stochastic, DBLP:conf/aaai/HuoGLH18,zhou2019momentum,DBLP:journals/corr/abs-1809-02505}, the following extension with many components has been less studied: 
\begin{align}\label{eqn:mcomp}
\min_{\x\in\R^d}\sum_{i=1}^mf_i(g_i(\x)) =  \sum_{i=1}^m\E_{\xi\sim\P_i}[f_i(\E_{\zeta\sim \Q_i}[g_i(\x; \zeta)]; \xi)],
\end{align}
which is a special case of SBO with many lower-level problems: 
\begin{align*}
&\min_{\x\in\R^d}\sum_{i=1}^mf_i(\y_i^*(\x)) =\sum_{i=1}^m \E_{\xi\sim \P_i}[f_i(\y_i^*(\x); \xi)]\\
& \y_i^*(\x)=\arg\min  - \y_i^{\top}\E_{\zeta\sim \Q_i}[g_i(\x; \zeta)];+ \frac{1}{2}\|\y_i\|^2, i=1, \ldots, m. 
\end{align*}
The problem~(\ref{eqn:mcomp}) has important applications in machine learning. We give one example, which is the optimization of area under precision-recall curve (AUPRC). Recently, \cite{DBLP:journals/corr/abs-2104-08736} proposes to optimize a surrogate loss of averaged precision for AUPRC optimization, which can be formulated as 
\begin{align}\label{eqn:auprc}
\min_{\x\in\R^d}\sum_{b_i=1}\frac{-\E_{\a_j, b_j}[\ell(\x; \a_i, \a_j)\I(b_j=1)]}{\E_{\a_j, b_j}[\ell(\x; \a_i, \a_j)]} ,
\end{align}
where $\x$ denotes a model parameter, $(\a_i, b_i)$ denote an input data and its corresponding label $b_i\in\{1, -1\}$, and $\ell(\w; \a_i, \a_j)$ denotes a pairwise loss function. Define $f_i (\mathbf g) = \frac{-g_{i,1}(\x)}{g_{i,2}(\x)}$, $g_{i,1}(\x) = \E_{\a_j, b_j}[\ell(\x; \a_i, \a_j)\I(b_j=1)]$ and $g_{i,2}(\x) = \E_{\a_j, b_j}[\ell(\x; \a_i, \a_j)]$, the above problem reduces to an instance of~(\ref{eqn:mcomp}) and hence an instance of~(\ref{eqn:msbo}). \cite{DBLP:journals/corr/abs-2104-08736} has proposed a stochastic algorithm for solving~(\ref{eqn:auprc}), whose sample complexity is $O(m/\epsilon^5)$. Under a Lipschitz continuity condition of $\ell(\x; \a, b)$ and other appropriate conditions, the proposed  RSVRB method has a better sample complexity of $O(m/\epsilon^3)$.

{\bf Structured Non-Convex Strongly-Concave Min-Max Optimization.} 
The proposed RSVRB can be applicable to the following structured min-max problem: 
\begin{align}\label{eqn:minmax}
\min_{\x\in\R^d}\max_{\y_i\in\Omega_i \atop i=1, \ldots, m}\sum_{i=1}^mf_i(\x, \y_i) \Longleftrightarrow \min_{\x\in\R^d}\sum_{i=1}^mf_i(\x, \y_i(\x^*)),\: s.t.\: \y_i^*(\x) = \arg\min_{\y_i\in\Omega_i} - f_i(\x, \y_i), i\in[m].
\end{align}
For the problem~(\ref{eqn:minmax}) with $m=1$, our algorithm SVRB reduces to a stochastic variance-reduced algorithm for solving non-convex strongly-concave min-max problem proposed in~\cite{DBLP:journals/corr/abs-2008-08170}. However, when $m$ is large, our algorithm RSVRB could be more efficient. One application of~(\ref{eqn:minmax}) with large $m$ is multi-task deep AUC maximization where each $i$ corresponds to a binary classification task since AUC maximization for one task can be written as a non-convex strongly-concave min-max optimization problem with one dual variable~\cite{liu2019stochastic,DBLP:journals/corr/abs-2012-03173}. 

\section{Preliminaries}
\paragraph{Notations.} We use $\|\cdot\|$ to denote the $\ell_2$ norm of a vector or a matrix. The $\ell_2$ norm of a matrix is known as the spectral norm. Let $\|\cdot\|_F$ denote the Frobenius norm of a matrix. In the following, an Euclidean norm refers to $\ell_2$ norm of a vector or Frobenius norm of a matrix. Let $\Pi_{\Omega}[\cdot]$ denotes Euclidean projection onto a convex set $\Omega$ for a vector and $\S^l_{\lambda}[X]$ denotes a projection onto the set $\{X\in \R^{d\times d}: X\succeq \lambda I\}$.  We use the notation $\S_C^u[X]$ denote a projection onto the set $\{X\in \R^{d\times d}: \|X\|\leq C\}$. Both $\S^l_{\lambda}[X]$ and $\S^u_{C}[X]$ can be implemented by using SVD and thresholding the singular values. For an Euclidean ball $\Omega_C=\{\u\in\R^d, \|\u\|\leq C\}$, we abuse the notation $\Pi_{C} = \Pi_{\Omega_C}$.

	\setlength{\textfloatsep}{10pt}

Let us first present some preliminaries for solving SBO with only one lower-level problem. Recall the problem:
\begin{equation}
\begin{aligned}
&\min_{\x} F(\x) = f(\x, \y^*(\x)) =\E_\xi[ f(\x, \y^*(\x); \xi)],\\
&\y^*(\x) \in \arg\min g(\x, \y) = \E_\zeta[g(\x, \y; \zeta)].
\end{aligned}
\end{equation}

We make the following assumptions about the above problem. 
\begin{ass}\label{ass:1}
For any $\x$, $g(\x, \y)$ is $L_g$-smooth and $\lambda$-strongly convex function, i.e., $L_g I\succeq\nabla_{yy}^2 g(\x, \y)\succeq \lambda I$. 
\end{ass}
\begin{ass}\label{ass:2}
For $f, g$ we assume the following conditions hold 
\begin{itemize}
\item (i) $\nabla_x f(\x, \y; \xi)$ is $L_{fx}$-Lipschitz continuous, $\nabla_y f(\x, \y; \xi)$ is $L_{fy}$-Lipschitz continuous, $\nabla_y g(\x, \y; \zeta)$ is $L_{gy}$-Lipschitz continuous, $\nabla_{xy}^2 g(\x, \y; \zeta)$ is $L_{gxy}$-Lipschitz continuous, $\nabla_{yy}^2 g(\x, \y; \zeta)$ is $L_{gyy}$-Lipschitz continuous, all respect to $(\x, \y)$. 
\item (ii)  $\nabla_x f(\x, \y; \xi)$, $\nabla_y f(\x, \y; \xi)$, $\nabla_y g(\x, \y; \zeta)$, $\nabla_{xy}^2 g(\x, \y; \zeta)$ and $\nabla_{yy}^2 g(\x, \y; \zeta)$  have a variance bounded by $\sigma^2$. 
\item (iii)  $\|\nabla_y f(\x, \y)\|^2]\leq C_{fy}^2$, $\|\nabla_{xy}^2 g(\x, \y)\|^2]\leq C_{gxy}^2$.
\item (iv) $F(\x_0) - F(\x_*) \leq \Delta$, where $\x_* = \arg\min\limits_{\x} F(\x)$. 
\end{itemize}
\end{ass}
{\bf Remark:} Assumption~\ref{ass:1} is made in many existing works for SBO~\cite{99401,DBLP:journals/corr/abs-2010-07962,DBLP:journals/corr/abs-2007-05170,DBLP:journals/corr/abs-2102-04671}. Assumptions~\ref{ass:2} (ii)$\sim$(iv) are also standard in the literature~\cite{99401,DBLP:journals/corr/abs-2010-07962,DBLP:journals/corr/abs-2007-05170}.  In terms of Assumption~\ref{ass:2} (i) compared with~\cite{DBLP:journals/corr/abs-2102-04671}, we assume stronger conditions that $\nabla_x f(\x, \cdot, \xi)$ for any fixed $\x$ and $\nabla_y g(\cdot, \cdot, \zeta)$ are Lipschitz continuous; in contrast, they assume $\nabla_x f(\x, \cdot)$ for any fixed $\x$ and $\nabla_y g(\cdot, \cdot)$ are Lipschitz continuous. However, we have weaker assumptions $\|\nabla_y f(\x, \y)\|^2]\leq C_{fy}^2$, $\|\nabla_{xy}^2 g(\x, \y)\|^2]\leq C_{gxy}^2$. In contrast, they assume $\E\|\nabla_y f(\x, \y; \xi)\|^p]\leq C_{fy}^p$, and $\E\|\nabla_x f(\x, \y; \xi)\|^p]\leq C_{fx}^p$ for both $p=2, 4$,  and $\E\|\nabla_{xy}^2 g(\x, \y)\|^2\leq C_{gxy}^2$ and $\E\|\nabla_{yy}^2 g(\x, \y)\|^2\leq C_{gyy}^2$.  It is also notable that \citep{99401,DBLP:journals/corr/abs-2007-05170} make an additional (implicit) assumption that $L_{gy} I\succeq \nabla_{yy}^2 g(\x, \y; \zeta)\succeq \lambda I$ (cf. the proof of Lemma 3.2 in~\citep{99401}). Notice that this is a much stonger assumption, which usually does not hold in practice. 

In order to understand the proposed algorithms, we present following proposition about the gradient of $F(\x)$, which is a standard result in the literature of bilevel optimization~\citep{99401}.  
\begin{proposition}\label{prop:1}
Under Assumption~\ref{ass:1}, we have
\begin{align*}
\nabla F(\x) &  = \nabla_x f(\x, y^*(\x)) + \nabla y^*(\x)^{\top}\nabla_y f(\x, \y^*(\x))\\
& =  \nabla_x f(\x, \y^*(\x))  - \nabla_{xy}^2 g(\x, \y^*(\x))[\nabla_{yy}^2g(\x, \y^*(\x))]^{-1}\nabla_y f(\x, \y^*(\x)).
\end{align*}
\end{proposition}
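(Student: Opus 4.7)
The plan is to derive the gradient expression in two stages: first apply the chain rule to $F(\x) = f(\x, \y^*(\x))$, then use the implicit function theorem on the lower-level optimality condition to express $\nabla \y^*(\x)$ in closed form.

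First I would observe that Assumption~\ref{ass:1} makes $\y^*(\x)$ a well-defined single-valued map: $\lambda$-strong convexity of $g(\x, \cdot)$ guarantees a unique minimizer for each $\x$. Moreover, since $\nabla_{yy}^2 g(\x, \y) \succeq \lambda I$ is uniformly invertible and $g$ is twice continuously differentiable by Assumption~\ref{ass:2}(i), the implicit function theorem applied to the first-order optimality condition
$$\nabla_y g(\x, \y^*(\x)) = 0$$
yields continuous differentiability of $\y^*(\x)$ on $\R^d$.

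Differentiating the identity $\nabla_y g(\x, \y^*(\x)) = 0$ with respect to $\x$ via the chain rule gives
$$\nabla_{yx}^2 g(\x, \y^*(\x)) + \nabla_{yy}^2 g(\x, \y^*(\x)) \,\nabla \y^*(\x) = 0,$$
and inverting yields $\nabla \y^*(\x) = -[\nabla_{yy}^2 g(\x, \y^*(\x))]^{-1} \nabla_{yx}^2 g(\x, \y^*(\x))$. Taking the transpose and using that $\nabla_{yy}^2 g$ is symmetric and $\nabla_{yx}^2 g = (\nabla_{xy}^2 g)^\top$, I obtain
$$\nabla \y^*(\x)^\top = -\nabla_{xy}^2 g(\x, \y^*(\x)) \,[\nabla_{yy}^2 g(\x, \y^*(\x))]^{-1}.$$

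Finally, applying the chain rule directly to $F(\x) = f(\x, \y^*(\x))$ produces
$$\nabla F(\x) = \nabla_x f(\x, \y^*(\x)) + \nabla \y^*(\x)^\top \nabla_y f(\x, \y^*(\x)),$$
which is the first stated equality; substituting the closed-form expression for $\nabla \y^*(\x)^\top$ derived above gives the second. The only nontrivial point is justifying applicability of the implicit function theorem, which is immediate from strong convexity; the rest is routine multivariable calculus, so I do not anticipate a real obstacle beyond keeping the Jacobian orientations and transposes straight.
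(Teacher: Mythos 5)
Your proof is correct and is exactly the standard implicit-differentiation argument (differentiate $\nabla_y g(\x,\y^*(\x))=0$, invert the strongly-convex Hessian, then chain rule) that the paper itself relies on by citing the bilevel-optimization literature rather than reproving. The only cosmetic remark is that the transposes and the symmetry of $\nabla^2_{yy}g$ are handled properly, so there is no gap to fill.
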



\section{SBO with One Lower-level Problem}

{\bf Motivation of the Proposed Algorithm.} A naive approach to approximate the gradient $\nabla F(\x)$ at any iterate $\x_t$ is to compute $\y^*(\x_t)$ and then approximate each term in the R.H.S of the equation in Proposition~\ref{prop:1} by their (mini-batch) stochastic gradients, i.e., $\nabla_x f(\x_t, \y^*(\x_t); \xi_1)  - \nabla_{xy}^2 g(\x_t, \y^*(\x_t); \xi_{21})[\nabla_{yy}^2g(\x_t, \y^*(\x_t); \xi_{22})]^{-1}\nabla_y f(\x_t, \y^*(\x_t); \xi_1)$, where $\xi_1, \xi_{21}, \xi_{22}$ are independent stochastic random variables.  There are two deficiencies of this naive approach: (i) Due to the inverse function, the expectation of above stochastic variable is not equal to $\nabla F(\x_t)$. In order to control the error of the stochastic gradient estimator, a straightforward approach is to use a mini-batch stochastic gradient estimator for $\nabla_{yy}^2g(\x_t, \y^*(\x_t))$ with a large mini-batch size. A better way is to use a variance-reduced estimator for $\nabla_{yy}^2g(\x_t, \y^*(\x_t))$, which has been studied extensively in the literature of stochastic optimization and stochastic compositional optimization~\citep{DBLP:journals/corr/abs-2006-10138,cutkosky2019momentum,chen2020solving,DBLP:journals/mp/WangFL17,DBLP:journals/siamjo/GhadimiRW20}.  The inverse function $[\nabla_{yy}^2g(\x, \y^*(\x))]^{-1}$ in $\nabla F(\x)$ makes it similar to a gradient of a compositional function. Hence, an estimator of $\nabla_{yy}^2g(\x_t, \y^*(\x_t))$ with variance-reduced property is important to the convergence of an algorithm without using a large mini-batch size. (ii) Computing $\y^*(\x_t)$ exactly at each iteration is computationally expensive and is also not necessary. This is because at the beginning $\x_t$ is far from good, hence computing an approximate solution of $\y^*(\x_t)$ is enough for the algorithm to make the progress.  

Based on the above motivation, at each iteration $t$, we use $\y_t$ to approximate $\y^*(\x_t)$, and use variance-reduced stochastic gradient estimators to estimate each term in the following: 
\begin{align*}
\nabla F(\x_t, \y_t) = \nabla_x f(\x_t, \y_t) - \nabla^2_{xy}g(\x_t, \y_t)[\nabla_{yy}^2g(\x_t, \y_t)]^{-1}\nabla_y f(\x_t, \y_t).
\end{align*}
To this end, we will maintain four quantities $\u_{t+1}, \v_{t+1}, V_{t+1}, H_{t+1}$ for estimating $ \nabla_x f(\x_t, \y_t)$, $\nabla_y f(\x_t, \y_t)$, $ \nabla^2_{xy}g(\x_t, \y_t)$, $\nabla_{yy}^2g(\x_t, \y_t)$, respectively, and estimate $\nabla F(\x_t, \y_t)$ by  
\begin{align*}
\z_{t+1} = \u_{t+1} -V_{t+1}[H_{t+1}]^{-1}\v_{t+1}.
\end{align*}
Then we can update $\x_{t+1}$ by $\x_{t+1} = \x_t - \eta_{t} \z_{t+1}$. 
\begin{algorithm}[t]
\caption {Stochastic Variance-Reduced Bilevel Method: SVRB}\label{alg:1}
\begin{algorithmic}[1]
\STATE{Initialization: $\x_0\in \R^d, \y_0 \in\R^{d'}, \u_0, \v_0, V_0, H_0, \w_0$}
\FOR{$t=1,2, ..., T$}
\STATE {$\u_{t+1} = (1-\beta_{fx,t})(\u_t - \nabla_x f(\x_{t-1}, \y_{t-1}; \xi_t))  + \nabla_x f(\x_t, \y_t; \xi_t) $}
\STATE {$\v_{t+1} = \Pi_{C_{fy}}[(1-\beta_{fy,t})(\v_t - \nabla_y f(\x_{t-1}, \y_{t-1}; \xi_t))+  \nabla_y f(\x_t, \y_t; \xi_t)]$}
\STATE $V_{t+1} = \S^u_{C_{gxy}}[(1-\beta_{gxy,t})(V_t - \nabla_{xy}^2g(\x_{t-1}, \y_{t-1}; \zeta_t) +\nabla_{xy}^2g(\x_{t}, \y_{t}; \zeta_t) ]$
\STATE $H_{t+1} = \S^l_{\lambda}[(1-\beta_{gyy,t})(H_t - \nabla_{yy}^2g(\x_{t-1}, \y_{t-1}; \zeta_t) +\nabla_{yy}^2g(\x_{t}, \y_{t}; \zeta_t) ]$
\STATE {$\w_{t+1} = (1-\beta_{gy,t})(\w_t - \nabla_y g(\x_{t-1}, \y_{t-1}; \zeta_t))+  \nabla_y g(\x_t, \y_t; \zeta_t)$}
\STATE $\z_{t+1} = \u_{t+1} - V_{t+1}[H_{t+1}]^{-1}\v_{t+1}$
\STATE$\x_{t+1} = \x_{t} - \eta_t\gamma\z_{t+1}$
\STATE {$\y_{t+1} = \y_t - \tau_t\tau \w_{t+1}$} 
\ENDFOR 
\STATE{\textbf{return} $(\x_{\Tilde{t}}, y_{\Tilde{t}})$ for a randomly selected $\Tilde{t}\in\{0, 1, \ldots, T\}$.}
\end{algorithmic}
\label{alg:main}
\end{algorithm}

Next, we consider the update for $\y_{t+1}$. This update is for approximating $\y^*(\x_{t+1})$. A simple approach is to simply use the stochastic gradient of $g(\x, \y)$ at previous solutions $(\x_t, \y_t)$. However, in order to match the variance-reduced property of $\z_{t+1}$, we will also use a variance-reduced gradient estimator for $\nabla g_y(\x, \y)$ to update $\y_{t+1}$, i.e., $\y_{t+1} = \y_t - \tau_t \w_{t+1}$, where $\w_{t+1}$ is a variance-reduced estimator of $\nabla g_y(\x_t, \y_t)$. 

Based on the motivation described above, we present  the detailed steps in Algorithm~\ref{alg:1}. The algorithm is referred to as stochastic variance-reduced bilevel (SVRB) method. The key to the algorithm lies at using the STROM technique to compute the variance reduced estimators, which is developed by~\citep{cutkosky2019momentum} with the idea originated from  several earlier variance reduced estimators (SPIDER~\citep{DBLP:journals/corr/abs-1807-01695}, SARAH~\citep{DBLP:conf/icml/NguyenLST17}) for stochastic non-convex minimization. Another important feature of SVRB is the projection of $\v_{t+1}, V_{t+1}, H_{t+1}$ onto an appropriate domain to ensure appropriate boundness. 

{\bf Differences between SVRB and STABLE.} The key differences between SVRB and STABLE lie at (i) SVRB  uses a variance-reduced estimator  $\u_{t+1}, \v_{t+1}, \w_{t+1}$ with the STORM technique to approximate $\nabla_x f(\x_t, \y_t), \nabla_y f(\x_t, \y_t), \nabla_y g(\x_t, \y_t)$; in contrast STABLE uses unbiased estimators for these gradients; (ii) SVRB does a simple update for $\y_{t+1}$ using the variance-reduced estimator $\w_{t+1}$; in contrast STABLE uses  a step that includes  $H_{t+1}^{-1}V_{t+1}^{\top}(\x_{t+1} - \x_t)$ in the update of $\y_{t+1}$. These differences make the analysis of SVRB different from that of STABLE, especially on bounding $\|\y_t - \y^*(\x_t)\|^2$. Finally, we note that the $H_{t+1}^{-1}\v_{t+1}$ in Step 4 of Algorithm~\ref{alg:1} can be efficiently computed by 
the conjugate gradient (CG) algorithm that only involves Hessian-vector product~\cite{rajeswaran2019meta}. 

The main convergence result of SVRB is presented in the following theorem. 
\begin{thm}\label{thm:1}
Suppose Assumption~\ref{ass:1} and Assumption~\ref{ass:2} hold.  
With $\tau\leq 1/(3L_g)$, $\gamma\leq \min\{\frac{\sqrt{\tau\lambda}}{8\sqrt{C}L_y}, \frac{1}{16(L_{gy}^2 + L_{fx}^2 + L^2_{fy} + L^2_{gxy} + L^2_{gyy})}\}$, $\eta_t=\tau_t=c/(c_0 + t)^{1/3}$, 
$\beta_{gy, t} = (\frac{1}{7L_Fc^3}+\frac{8C\gamma\tau_t\tau}{\lambda\eta_t}) \eta_t^2$, $\beta_{fx, t+1} =  (\frac{1}{7L_Fc^3}  + C_1\gamma)\eta_t^2$, $\beta_{fy, t+1} = (\frac{1}{7L_Fc^3}  + C_4\gamma)\eta_t^2$,  $\beta_{gxy, t+1} = (\frac{1}{7L_Fc^3}  + C_2\gamma)\eta_t^2$, and $\beta_{gyy, t+1} = (\frac{1}{7L_Fc^3}  + C_3\gamma)\eta_t^2$, where
$c_0\geq\max\{2, (4 L_F c)^3, \left(\frac{c^2\lambda}{16C\gamma\tau}\right)^{3/2}, \left(\frac{2}{7L_Fc}\right)^{3/2}, (2(C_1+C_2+C_3+C_4) \gamma c^2)^{3/2}\}$, 
$C=\max\{\frac{4C_0}{\tau\lambda},\frac{4(L^2_{gy}+L^2_{fx}+L^2_{fy}+L^2_{gxy}+L^2_{gyy})}{\gamma}\}$ and 
$C_0, C_1, C_2, C_3, C_4$ 
are constants specified in the proof, we have 
\begin{align*} 
\E\bigg[\frac{1}{T+1}\sum_{t=0}^T\|\nabla F(\x_t)\|^2\bigg] \leq \widetilde O\left(\frac{1}{T^{2/3}}\right).
\end{align*}
\end{thm}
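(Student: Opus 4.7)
The plan is to build a Lyapunov function of the form
\begin{equation*}
\Phi_t = F(\x_t) - F(\x_*) + \alpha_0 \|\y_t - \y^*(\x_t)\|^2 + \alpha_1 \Delta^u_t + \alpha_2 \Delta^v_t + \alpha_3 \Delta^V_t + \alpha_4 \Delta^H_t + \alpha_5 \Delta^w_t,
\end{equation*}
where the $\Delta^\cdot_t$ denote the squared errors of the STORM estimators $\u_t, \v_t, V_t, H_t, \w_t$ against their respective population gradients at $(\x_{t-1}, \y_{t-1})$, and then show a per-step inequality
$\E[\Phi_{t+1} - \Phi_t] \leq -c\,\eta_t\gamma\,\E\|\nabla F(\x_t)\|^2 + \text{(higher-order noise)}$.
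Telescoping and using $\eta_t = c/(c_0+t)^{1/3}$ will yield the stated $\widetilde O(T^{-2/3})$ rate.

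First I would establish five independent STORM recursions of the canonical form
\begin{equation*}
\E\|\u_{t+1} - \nabla_x f(\x_t,\y_t)\|^2 \leq (1-\beta_{fx,t})^2\E\|\u_t - \nabla_x f(\x_{t-1},\y_{t-1})\|^2 + 2\beta_{fx,t}^2\sigma^2 + 2L_{fx}^2(\|\x_t-\x_{t-1}\|^2 + \|\y_t-\y_{t-1}\|^2),
\end{equation*}
and analogously for $\v, V, H, \w$ using the Lipschitz constants in Assumption~\ref{ass:2}(i) and variance bound in (ii); the projections $\Pi_{C_{fy}}, \S^u_{C_{gxy}}, \S^l_{\lambda}$ do not increase distance to targets (which themselves lie in those sets by Assumption~\ref{ass:2}(iii) and Assumption~\ref{ass:1}), so they are harmless. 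Second, using Proposition~\ref{prop:1} together with the identity
$\z_{t+1} - \nabla F(\x_t) = (\u_{t+1} - \nabla_x f(\x_t,\y_t)) + \nabla_{xy}^2 g(\x_t,\y^*(\x_t))[\nabla_{yy}^2 g(\x_t,\y^*(\x_t))]^{-1}\nabla_y f(\x_t,\y^*(\x_t)) - V_{t+1} H_{t+1}^{-1}\v_{t+1}$,
I would split the second cluster by adding and subtracting the exact quantities at $(\x_t,\y_t)$ and then at the estimator triple, and use boundedness of $V_{t+1}, H_{t+1}^{-1}, \v_{t+1}$ (from the projections and $H_{t+1}\succeq \lambda I$) plus smoothness of $f,g$ to get
\begin{equation*}
\|\z_{t+1}-\nabla F(\x_t)\|^2 \leq c_u\Delta^u_{t+1} + c_v\Delta^v_{t+1} + c_V\Delta^V_{t+1} + c_H\Delta^H_{t+1} + c_y\|\y_t - \y^*(\x_t)\|^2.
\end{equation*}

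Third, for the tracking error I would use $\lambda$-strong convexity and $L_g$-smoothness of $g(\x,\cdot)$ together with $L_y$-Lipschitzness of $\y^*(\cdot)$ (implied by Assumption~\ref{ass:1}) to show, via the standard one-step contraction for (variance-reduced) gradient descent on a strongly convex objective,
\begin{equation*}
\E\|\y_{t+1} - \y^*(\x_{t+1})\|^2 \leq (1-\tau\tau_t\lambda)\E\|\y_t - \y^*(\x_t)\|^2 + C\,\eta_t^2\gamma^2\E\|\z_{t+1}\|^2/(\tau\tau_t\lambda) + C'\tau_t^2\Delta^w_{t+1},
\end{equation*}
which is where the coupling between the $\x$ and $\y$ dynamics enters. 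Fourth, I would combine the descent inequality $F(\x_{t+1}) \leq F(\x_t) - \eta_t\gamma\langle\nabla F(\x_t),\z_{t+1}\rangle + \tfrac{L_F}{2}\eta_t^2\gamma^2\|\z_{t+1}\|^2$ (where $L_F$-smoothness of $F$ follows from Assumption~\ref{ass:1}–\ref{ass:2} by a standard bilevel argument) with Young's inequality $\langle \nabla F(\x_t),\z_{t+1}\rangle \geq \tfrac12\|\nabla F(\x_t)\|^2 - \tfrac12\|\z_{t+1}-\nabla F(\x_t)\|^2$ to expose a $-\tfrac{\eta_t\gamma}{2}\|\nabla F(\x_t)\|^2$ term and a manageable $\tfrac{\eta_t\gamma}{2}\|\z_{t+1}-\nabla F(\x_t)\|^2$ term.

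The main obstacle will be choosing the constants $\alpha_0,\ldots,\alpha_5$ and verifying the prescribed $\beta_{\cdot,t} = \Theta(\eta_t^2)$ so that in the telescoped sum the $(1-\beta)^2\Delta_t^\cdot$ terms pay for the $\Delta_{t+1}^\cdot$ contributions appearing in the gradient-error bound, the $\beta^2\sigma^2$ noise contributes at most $\widetilde O(\sum_t \eta_t^3) = \widetilde O(1)$, and the $L^2(\|\x_t-\x_{t-1}\|^2 + \|\y_t-\y_{t-1}\|^2) = O(\eta_{t-1}^2\gamma^2\|\z_t\|^2 + \tau_{t-1}^2\tau^2\|\w_t\|^2)$ perturbations in the STORM recursions are absorbed by the negative $\|\nabla F(\x_t)\|^2$ and $\|\y_t-\y^*(\x_t)\|^2$ drifts; this is exactly what fixes the constants $C_0,\ldots,C_4$ in the theorem. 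Finally, summing from $t=0$ to $T$, dividing by $\sum_t \eta_t\gamma = \Theta(T^{2/3})$, and recognizing that the residual noise contributes only logarithmic factors through $\sum_t \eta_t^3/\eta_t = \sum_t\eta_t^2 = \widetilde O(T^{1/3})$, yields
$\E[\tfrac{1}{T+1}\sum_{t=0}^T\|\nabla F(\x_t)\|^2] = \widetilde O(T^{-2/3})$, completing the proof.
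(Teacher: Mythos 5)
Your ingredients are the same as the paper's: the descent inequality exposing $-\tfrac{\gamma\eta_t}{2}\|\nabla F(\x_t)\|^2$ and $+\tfrac{\gamma\eta_t}{2}\|\z_{t+1}-\nabla F(\x_t)\|^2$, the decomposition of the latter into the four estimator errors plus $\|\y_t-\y^*(\x_t)\|^2$ using the projections and $H_{t+1}\succeq\lambda I$, the strongly-convex one-step contraction for the tracking error (note its $\w$-error coefficient is $\tfrac{8\tau_t\tau}{\lambda}$, linear in $\tau_t$, not $\tau_t^2$ as you wrote, which is why $\beta_{gy,t}$ in the theorem carries the extra $\tfrac{8C\gamma\tau_t\tau}{\lambda\eta_t}\eta_t^2$ piece), and the STORM recursions for $\u,\v,V,H,\w$. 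The gap is in how you close the argument: you propose a Lyapunov function with \emph{constant} weights $\alpha_1,\dots,\alpha_5$ on the STORM errors and claim a per-step inequality $\E[\Phi_{t+1}-\Phi_t]\le -c\,\eta_t\gamma\,\E\|\nabla F(\x_t)\|^2+\text{noise}$. This cannot hold as stated: each estimator error $\Delta_t$ enters the descent side with coefficient $\Theta(\gamma\eta_t)$, while with $\beta_{\cdot,t}=\Theta(\eta_t^2)$ the STORM recursion supplies a negative drift of only $\alpha_i\beta_t\Delta_t=\Theta(\alpha_i\eta_t^2)\Delta_t$ per step; since $\eta_t\to 0$, no fixed $\alpha_i$ can make $\alpha_i\eta_t^2\gtrsim\gamma\eta_t$ for all $t$, and summing the unweighted recursions only controls $\sum_t\eta_t^2\E[\Delta_t]$ rather than the needed $\sum_t\eta_t\E[\Delta_t]$.

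The paper resolves exactly this with a time-varying potential $\delta_{*,t}/\eta_{t-1}$ (its Lemma 5, in the spirit of the STORM analysis): telescoping gives a coefficient $\tfrac{1}{\eta_t}-\tfrac{1}{\eta_{t-1}}-\tfrac{\beta_{*,t+1}}{\eta_t}\le\tfrac{1}{7L_Fc^3}\eta_t-\tfrac{\beta_{*,t+1}}{\eta_t}=-C_i\gamma\eta_t$ on each $\delta_{*,t}$, which is precisely why every $\beta$ in the theorem contains the extra $\tfrac{1}{7L_Fc^3}\eta_t^2$ term and why $c_0\ge\max\{2,(4L_Fc)^3,\dots\}$ is imposed; the constant weight is kept only on $\|\y_t-\y^*(\x_t)\|^2$, whose contraction factor $\tau_t\tau\lambda/4$ is already $\Theta(\eta_t)$. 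The $\tfrac{2L^2}{\eta_t}\|\x_{t+1}-\x_t\|^2$ and $\tfrac{2L^2}{\eta_t}\|\y_{t+1}-\y_t\|^2$ perturbations produced by this weighting are then absorbed by the retained $-\tfrac{\gamma\eta_t}{4}\|\z_{t+1}\|^2$ term (via the bound on $\gamma$) and by the negative $\|\y_t-\y_{t+1}\|^2$ term of the tracking lemma (via the lower bound on $C$), which is where the specific constants in the theorem come from. Your noise bookkeeping also reflects this confusion: with the $1/\eta_t$ weights the injected noise is $\sum_t\beta_t^2\sigma^2/\eta_t=O(\sum_t\eta_t^3)=O(\log T)$, which after dividing by $\eta_T(T+1)=\Theta(T^{2/3})$ gives $\widetilde O(T^{-2/3})$; your alternative tally $\sum_t\eta_t^2=\widetilde O(T^{1/3})$ would only yield $O(T^{-1/3})$ and contradicts your earlier (correct) estimate. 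With the weighted potential substituted for your constant-coefficient one, the rest of your outline goes through as in the paper.
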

\vspace*{-0.1in}
{\bf Remark:} The above theorem implies that SVRB enjoys a sample complexity of $\widetilde O(1/\epsilon^3)$ for finding an epsilon-stationary point. The logarithmic factor can be removed by setting $\eta_t$ as a fixed value at the level of $\epsilon$. It is also notable that the step sizes for $\x_t$ and $\y_t$ are at the same level, which is referred to as single time-scale. The proof is included in the Appendix~\ref{sec:thm1}.

\section{SBO with Many Lower-level Problems}
In this section, we present an randomized algorithm by extending the algorithm in last section  for tackling SBO with many lower-level problems, i.e., 
\begin{align*}
&\min_{\x} F(\x) = \frac{1}{m}\sum_{i=1}^m \underbrace{f_i(\x, \y^*_i(\x))}\limits_{F_i(\x)}=  \E_{\xi\sim \mathcal P_i}[f_i(\x, \y^*_i(\x); \xi)],\\
&\y_i^*(\x) \in \arg\min_{\y_i\in\R^{d_i}} g_i(\x, \y_i)= \E_{\zeta\sim \Q_i}[g_i(\x, \y_i;  \zeta)], i=1, \ldots, m.
\end{align*}
Regarding the above problem, we make the following assumptions. 
\begin{ass}\label{ass:3}
For any $\x$, $g_i(\x, \y_i)$ is  $L_g$-smooth and $\lambda$-strongly convex function, i.e., $L_gI\succeq \nabla_{yy}^2 g_i(\x,  \y_i)\succeq \lambda I$. 
\end{ass}
\begin{ass}\label{ass:4}
For $f, g$ we assume the following conditions hold 
\begin{itemize}
\item $\nabla_x f_i(\x, \y_i; \xi)$ is $L_{fx,i}$-Lipschitz continuous, $\nabla_y f_i(\x, \y_i; \xi)$ is $L_{fy,i}$-Lipschitz continuous, $\nabla_y g_i(\x, \y_i; \zeta)$ is $L_{gy,i}$-Lipschitz continuous, $\nabla_{xy}^2 g_i(\x, \y_i; \zeta)$ is $L_{gxy,i}$-Lipschitz continuous, $\nabla_{yy}^2 g_i(\x, \y_i; \zeta)$ is $L_{gyy,i}$-Lipschitz continuous, all respect to $(\x, \y_i)$. 
\item $\E[\|\nabla_x f_i(\x, \y_i; \xi)\|^2]\leq C_{fx,i}^2$, $\E[\|\nabla_y f_i(\x, \y_i; \xi)\|^2]\leq C_{fy,i}^2$,  $\E[\|\nabla_{xy}^2 g_i(\x, \y_i; \zeta)\|^2]\leq C_{gxy,i}^2$, $\E[\|\nabla_{yy}^2 g_i(\x, \y_i; \zeta)\|^2]\leq C_{gyy,i}^2$. 
\item Assume  $\y^*_i(\x)$ is bounded such that $\y_i^*(\x)\in\Y_i = \{\y_i\in\R^{d_i'}: \|\y_i\|\leq R_i\}$, and $\E[\|\nabla_{y} g_i(\x, \y_i; \zeta)\|^2]\leq C_{gy,i}^2$ for $\y_i\in \Y_i$.
\item $F(\x_0) - F(\x_*) \leq \Delta$, where $\x_* = \arg\min\limits_{\x} F(\x)$.
\end{itemize}
\end{ass}
{\bf Remark:} Note that the boundness assumption is similar to~\cite{DBLP:journals/corr/abs-2102-04671} except for $\E[\|\nabla_{y} g_i(\x, \y_i \zeta)\|^2]$. In order to ensure it bounded, we have an additional assumption that the optimal solutions to the lower problems are bounded and use this to ensure the boundness condition on $\E[\|\nabla_{y} g_i(\x, \y_i \zeta)\|^2]$ holds. 

The following proposition states the gradient of $F(\x)$. 
\begin{proposition}
Under Assumption~\ref{ass:3}, we have
\begin{align*}
\nabla F(\x) &  =\frac{1}{m}\sum_{i=1}^m \left[\nabla_x f_i(\x, \y_i^*(\x)) + \nabla y_i^*(\x)^{\top}\nabla_y f_i(\x, \y_i^*(\x))\right]\\
& =\frac{1}{m}\sum_{i=1}^m \left[\nabla_x f_i(\x, \y_i^*(\x))  - \nabla_{xy}^2 g_i(\x, \y_i^*(\x))[\nabla_{yy}^2g_i(\x, \y_i^*(\x))]^{-1}\nabla_y f_i(\x, \y_i^*(\x))\right].
\end{align*}
\end{proposition}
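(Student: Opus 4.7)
The plan is to reduce this to the single-task identity in Proposition~\ref{prop:1} and then average. Since the outer objective decomposes as $F(\x) = \frac{1}{m}\sum_{i=1}^m F_i(\x)$ with $F_i(\x) = f_i(\x, \y_i^*(\x))$, the gradient also decomposes as $\nabla F(\x) = \frac{1}{m}\sum_{i=1}^m \nabla F_i(\x)$ provided each $F_i$ is differentiable. So the work is entirely at the level of a single index $i$, and the same two-step argument that produces Proposition~\ref{prop:1} applies verbatim to each summand.

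First I would establish the chain-rule form. Assumption~\ref{ass:3} guarantees that $g_i(\x,\cdot)$ is $\lambda$-strongly convex, so $\y_i^*(\x)$ is the unique minimizer of $g_i(\x,\cdot)$ and is characterized by the first-order optimality condition $\nabla_y g_i(\x, \y_i^*(\x)) = 0$. Combined with the smoothness of $\nabla_y g_i$ from Assumption~\ref{ass:4} and the fact that $\nabla_{yy}^2 g_i(\x,\y_i^*(\x))$ is invertible (since it is at least $\lambda I$), the implicit function theorem applies and $\y_i^*(\x)$ is continuously differentiable in $\x$. The standard chain rule then yields
\begin{align*}
\nabla F_i(\x) = \nabla_x f_i(\x, \y_i^*(\x)) + \nabla \y_i^*(\x)^{\top}\nabla_y f_i(\x, \y_i^*(\x)),
\end{align*}
which is the first claimed identity once averaged over $i$.

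Next I would compute $\nabla \y_i^*(\x)$ explicitly by differentiating the optimality condition $\nabla_y g_i(\x, \y_i^*(\x)) \equiv 0$ in $\x$. Using the chain rule on this vector-valued identity gives
\begin{align*}
\bigl(\nabla_{xy}^2 g_i(\x,\y_i^*(\x))\bigr)^{\top} + \nabla_{yy}^2 g_i(\x,\y_i^*(\x))\, \nabla \y_i^*(\x) = 0,
\end{align*}
and inverting $\nabla_{yy}^2 g_i$ (valid by strong convexity) yields $\nabla \y_i^*(\x) = -[\nabla_{yy}^2 g_i(\x,\y_i^*(\x))]^{-1}\bigl(\nabla_{xy}^2 g_i(\x,\y_i^*(\x))\bigr)^{\top}$. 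Taking the transpose and using the symmetry of $\nabla_{yy}^2 g_i$, substitution into the chain-rule form gives the second equality, and averaging over $i$ concludes the proof.

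There is no genuine obstacle here; the statement is a direct $m$-task lifting of Proposition~\ref{prop:1}. The only thing requiring mild care is bookkeeping of shapes and transposes of $\nabla_{xy}^2 g_i$ (which is $d\times d_i$) versus $\nabla \y_i^*(\x)$ (which is $d_i\times d$), so that the product $\nabla_{xy}^2 g_i[\nabla_{yy}^2 g_i]^{-1}\nabla_y f_i$ lands in $\R^d$ as required. Once the conventions are fixed this is routine, and the differentiability hypotheses needed to apply the implicit function theorem are supplied by Assumptions~\ref{ass:3}--\ref{ass:4}.
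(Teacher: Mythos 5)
Your proof is correct and is exactly the standard argument the paper relies on: the paper states this proposition without proof (citing \citep{99401} for the single-task Proposition~\ref{prop:1}), and the cited derivation is precisely your route of differentiating the optimality condition $\nabla_y g_i(\x,\y_i^*(\x))=0$ via the implicit function theorem, substituting $\nabla \y_i^*(\x) = -[\nabla_{yy}^2 g_i]^{-1}(\nabla_{xy}^2 g_i)^{\top}$ into the chain rule, and averaging over $i$. No gaps; the shape/transpose bookkeeping you flag is handled correctly.
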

An extra challenge for computing the gradient of $F(\x)$ is processing $m$ lower problems (sampling data and computing stochastic gradients). When $m$ is large, the cost of processing all $m$ lower problems would be prohibitive. Below, we will present an efficient algorithm for tackling bilevel optimization with many lower level problems.

\vspace*{-0.1in}
\subsection{Algorithm and Convergence}
{\bf Motivation.} From the above proposition, one might consider randomly sampling a lower problem $i_t$ and then using the gradient estimator $\z_{i,t+1}$ of $\nabla F_i(\x_t)$ from the last section for the $i$-th lower problem. However, the variance of $\z_{i,t+1}$ compared with $\nabla F(\x_t)$ cannot be well controlled to ensure fast convergence. To address this issue, we use another layer of variance-reduced update (STORM) to approximate $\nabla F(\x_t)$.

\begin{algorithm}[t]
\caption {Randomized Stochastic Variance-Reduced Bilevel Method: RSVRB}\label{alg:2}
\begin{algorithmic}[1]
\STATE{Initialization: $\x_0\in \R^d, \y_0 \in\R^{d'}, \u_0, \v_0, V_0, H_0, \w_0, \d_0, \z_0$}
\FOR{$t=1,2, ..., T$}
\STATE Sample one lower problem $i_t$ following probability $\p=(p_1, \ldots, p_m)$
\STATE { $\u_{i, t+1} =\left\{\begin{array}{ll} \Pi_{C_{fx,i}}[(1-\beta_{fx,t})(\u_{i,t} - \frac{\nabla_x f_i(\x_{t-1}, \y_{i,t-1}; \xi_{i,t})}{p_i})  + \frac{\nabla_x f_i(\x_t, \y_{i,t}; \xi_{i,t})}{p_i}] & \text{ if } i = i_t\\
(1-\beta_{fx,t})\u_{i,t}\text{ (delay update until sampled)} & \text{o.w.}  \end{array}\right.$}
\STATE { $\v_{i,t+1} = \left\{\begin{array}{ll}\Pi_{C_{fy,i}}[(1-\beta_{fy,t})(\v_{i,t} - \frac{\nabla_y f(\x_{t-1}, \y_{i,t-1}; \xi_{i,t})}{p_i})+  \frac{\nabla_y f(\x_t, \y_{i,t}; \xi_{i,t})}{p_i}]& \text{ if }i=i_t \\(1-\beta_{fy,t})\v_{i,t} \text{ (delay update until sampled)} &\text{ o.w. } \end{array}\right.$}
\STATE $V_{i,t+1} =  \left\{\begin{array}{ll}\S^u_{C_{gxy,i}}[(1-\beta_{gxy,t})(V_{i,t} - \frac{\nabla_{xy}^2g_i(\x_{t-1}, \y_{i,t-1}; \zeta_{i,t})}{p_i}) + \frac{\nabla_{xy}^2g_i(\x_{t}, \y_{i,t}; \zeta_{i,t})}{p_i} ] & \text{ if } i=i_t\\ (1-\beta_{gxy,t})V_{i,t} \text{ (delay update until sampled)} & \text{o.w.} \end{array}\right.$
\STATE $H_{i,t+1} =   \left\{\begin{array}{ll}\S^l_{\lambda}[(1-\beta_{gyy,t})(H_{i,t} - \frac{\nabla_{yy}^2g_i(\x_{t-1}, \y_{i,t-1}; \zeta_{i,t})}{p_i})+ \frac{\nabla_{yy}^2g_i(\x_{t}, \y_{i,t}; \zeta_{i,t})}{p_i} ] & \text{  if }i_t = i\\ \S^l_{\lambda}[(1-\beta_{gyy,t})H_{i,t}] \text{ (delay update until sampled)} &\text{ o.w.}\end{array}\right.$
\STATE { $\w_{i,t+1} =  \left\{\begin{array}{ll}(1-\beta_{gy,t})(\w_{i,t} - \frac{\nabla_y g_i(\x_{t-1}, \y_{i,t-1}; \xi_{i,t})}{p_i})+ \frac{ \nabla_y g_i(\x_t, \y_{i,t}; \xi_{i,t})}{p_i} & \text{  if }i = i_t\\ (1-\beta_{gy,t})\w_{i,t} \text{ (delay update until sampled)}  & \text{ o.w. }\end{array}\right.$}
\STATE $\z_{i,t+1}=  \left\{\begin{array}{ll} \u_{i,t+1} - V_{i,t+1}[H_{i,t+1}]^{-1}\v_{i,t+1} &\text{  if } i=i_t\\   \u_{i,t+1} - V_{i,t+1}[H_{i,t+1}]^{-1}\v_{i,t+1} \text{ (delay update until sampled)} & \text{ o.w.}  \end{array}\right.$
\STATE Sample $j_t\in\{1,\ldots, m\}$ randomly
\STATE If $i_{t-1}\neq j_t$, Evaluate $\u_{j_t, t}, \v_{j_t, t}, V_{j_t, t}, H_{j_t, t}$ and $\z_{j_t, t}$
\STATE If $i_{t}\neq j_t$, Evaluate $\u_{j_t, t+1}, \v_{j_t, t+1}, V_{j_t, t+1}, H_{j_t, t+1}$ and $\z_{j_t, t+1}$
\STATE $\d_{t+1} = (1-\beta_t)(\d_t - \z_{j_t, t}) + \z_{j_t, t+1} $
\STATE$\x_{t+1} = \x_{t} - \eta_t\gamma\d_{t+1}$
\STATE {$\y_{i, t+1} = \left\{\begin{array}{ll}(1-\tau_t)\y_{i,t} + \tau_t \Pi_{\Y_i}[ \y_{i,t} - \tau \w_{i,t+1}]& \text{  if } i=i_t\\ (1-\tau_t)\y_{i,t} + \tau_t \Pi_{\Y_i}[ \y_{i,t} - \tau \w_{i,t+1}] \text{ (delay update until sampled)}  & \text{ o.w. } \end{array}\right.$} 
\ENDFOR 
\STATE{\textbf{return} $(\x_{\Tilde{t}}, \y_{\Tilde{t}}, \u_{\Tilde{t}}, \v_{\Tilde{t}}, V_{\Tilde{t}}, H_{\Tilde{t}}, \w_{\Tilde{t}}, \d_{\Tilde{t}}, \z_{\Tilde{t}})$ for a randomly selected ${\Tilde{t}}\in\{0, 1, \ldots, T\}$.}
\end{algorithmic}
\end{algorithm}

Based on the above motivation, we maintain  a stochastic gradient estimator $\d_{t+1}$ and update it by 
\begin{align*}
\d_{t+1} = (1-\beta)(\d_t - \z_{j_t,t})  + \z_{j_t, t+1},
\end{align*}
where $j_t$ is a sampled lower problem at iteration $t$ for updating $\d_{t+1}$. At each iteration, we only need to make one non-trivial update of $\z_{i_t}$ for a sampled $i_t$. The detailed steps are presented in Algorithm~\ref{alg:2}. 

{\bf Explanation of  Algorithm~\ref{alg:2}.}  We let $\y_t = (\y_{1,t}, \ldots, \y_{m,t})$, $\u_t = (\u_{1,t}, \ldots, \u_{m,t})$, $\v_t = (\v_{1,t}, \ldots, \v_{m,t})$, $V_t = (V_{1,t}, \ldots, V_{m,t})$, $H_t = (H_{1,t}, \ldots, H_{m,t}), \z_t = (\z_{1,t}, \ldots, \z_{m,t})$.  Steps 4 - 9 of Algorithm~\ref{alg:2} conduct updates for $\u_t, \v_t, V_t, H_t, \w_t$. Taking $\u_t$ as an example. Its update is 
\begin{align*}
\u_{t+1} = \Pi_{C_{fx,i}}[(1-\beta_{fx, t})(\u_t - \widetilde\nabla_x f(\x_{t-1}, \y_{t-1}; \xi_{i,t}, i_t)) +  \widetilde\nabla_x f(\x_{t}, \y_{t}; \xi_{i,t}, i_t)],
\end{align*}
where $\widetilde\nabla_x f(\x_{t}, \y_{t}; \xi_{i_t,t}, i_t) = (0, \ldots, \nabla_x f_i(\x_t, \y_{i,t}; \xi_{i_t,t})/p_{i_t}, \ldots, 0)$ and $\widetilde\nabla_x f(\x_{t-1}, \y_{i,t-1}; \xi_{i_t,t}, i_t) = (0, \ldots, \nabla_x f_i(\x_{t-1}, \y_{i,t-1}; \xi_{i_t,t})/p_{i_t}, \ldots, 0)$ satisfying
\begin{align*}
&\E_{\xi, i_t}[\widetilde\nabla_x f(\x_{t}, \y_{t}; \xi_{i_t,t}, i_t)] = (\nabla_x f_1(\x_{t}, \y_{1,t}), \ldots, \nabla_x f_m(\x_t, \y_{m,t})),\\
&\E_{\xi, i_t}[\widetilde\nabla_x f(\x_{t-1}, \y_{t-1}; \xi_{i_t,t}, i_t)] = (\nabla_x f_1(\x_{t-1}, \y_{1,t-1}), \ldots, \nabla_x f_m(\x_{t-1}, \y_{m,t-1})). 
\end{align*}
This step is applying the STORM technique for updating $\u_{t+1}$ based on randomized stochastic unbiased estimators, where only one coordinate of the unbiased estimators is non-zero. We refer to this step as randomized-coordinate STORM  (RC-STORM) update.

The RC-STORM update is the key to achieving efficiency, i.e., it only processes (non-trivially) one coordinate of  $\u_{t+1}$  for a sampled lower problem $i_t$ explicitly, where $i_t$ is sampled following a distribution $\Pr(i_t= i) = p_i$. 
For other coordinates $i\neq i_t$,  $\u_{i,t+1}$ are updated trivially, 
which are scaled by $(1-\beta_{fx, t})$. This scaling can be delayed  until it is sampled in the future since it is not used in updating $\x_{t+1}$  presumably unless $j_t = i$. In this case, there will be one  more coordinate of $\u_t$ that will be evaluated at most twice for the current and the last iteration. Overall, at each iteration, we only sample data from one lower problem to update the corresponding $\u_{i,t+1}$ and evaluate trivially another coordinate at most twice without sampling data from the corresponding lower problem. Similar updates are applied to $\v_{t+1}, \w_{t+1}, V_{t+1}, H_{t+1}$.

Next, we present the convergence of Algorithm~\ref{alg:2} below. Define $C_{fx}(\p) = \sum_{i=1}^m C^2_{fx, i}/p_i$ and $L_{fx}(\p) = \sum_{i=1}^mL^2_{fx,i}/p_i$, and similarly for other quantities. Let $C(\p) = C_{fx}(\p)+C_{fy}(\p)+C_{gy}(\p)+C_{gxy}(\p)+C_{gyy}(\p)$ and $L(\p) = L_{fx}(\p) + L_{fy}(\p) + L_{gy}(\p) + L_{gxy}(\p) + L_{gyy}(\p)$. 
\begin{thm}\label{thm:2}
Suppose Assumptions~\ref{ass:3} and~\ref{ass:4} hold. 
With $\tau\leq 1/(3L_g)$, $\gamma \leq \min\{\frac{\sqrt{\tau\lambda m}}{8\sqrt{C}L_y}, \frac{m}{24L(\p)}\}$, $\eta_t=\tau_t=\eta= \min \{\frac{1}{\sqrt{3(C_1+C_2+C_3+C_4+C_{\max}') \gamma}}, \frac{1}{2L_F  \gamma},\frac{\epsilon}{2}\sqrt{\frac{m}{\gamma C(\p) C_5}}\}$, $C\geq\max\{\frac{6L(\p)}{\gamma \tau},12C_0 \tau \lambda\}$, $\beta_{fx, t+1} =  3C_1 \gamma \eta_t^2$, 
$\beta_{fy, t+1} =  3C_4 \gamma \eta_t^2$, 
$\beta_{gxy, t+1} =  3C_2 \gamma \eta_t^2$, 
$\beta_{gyy, t+1} =  3C_3 \gamma \eta_t^2$, 
and $\beta_{t+1} = 2\gamma C'_{max} \eta_t^2$, where $C_1, C_2, C_3, C_4, C_5, C_{max}'$ are appropriate constants  specified in the proof, and by using a large mini-batch size of $O(1/m\epsilon)$ at the initial iteration for computing $\u_0, \v_0, H_0, V_0, \w_0$ and computing an accurate solution $\y_0$ such that $\|\y_0 - \y^*(\x_0)\|\leq O(L(\p))$. Then, in order to have $\E\bigg[\frac{1}{T+1}\sum_{t=0}^T\|\nabla F(\x_t)\|^2]\bigg]\leq \epsilon^2$, we need a total sample complexity
$T =  O(C(\p)/(m\epsilon^3))$. 
\end{thm}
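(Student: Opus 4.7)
The plan is to mirror the Lyapunov-type argument used for SVRB in Theorem~\ref{thm:1}, but with an additional layer that tracks the outer STORM estimator $\d_t$ on top of the per-task STORM estimators, and that accounts for the $1/p_i$ inflation of variance from the randomized-coordinate sampling of $i_t$. First I would invoke smoothness of $F$ (which follows from Assumption~\ref{ass:3}--\ref{ass:4} and Proposition~2 by a standard argument giving an effective constant $L_F$) to write the descent inequality
\begin{equation*}
F(\x_{t+1}) \le F(\x_t) - \eta_t\gamma \langle \nabla F(\x_t),\d_{t+1}\rangle + \tfrac{L_F}{2}\eta_t^2\gamma^2\|\d_{t+1}\|^2,
\end{equation*}
and then expand $\langle \nabla F(\x_t),\d_{t+1}\rangle = \tfrac12\|\nabla F(\x_t)\|^2+\tfrac12\|\d_{t+1}\|^2-\tfrac12\|\nabla F(\x_t)-\d_{t+1}\|^2$ so that everything reduces to controlling the gradient estimation error $\|\nabla F(\x_t)-\d_{t+1}\|^2$.

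The estimation error splits into three pieces: (i) the outer STORM gap $\|\d_{t+1}-\bar{\z}_{t+1}\|^2$ where $\bar{\z}_{t+1}=\frac{1}{m}\sum_i \z_{i,t+1}$; (ii) the inner composite error $\frac{1}{m}\sum_i\|\z_{i,t+1}-\nabla F_i(\x_t,\y_{i,t})\|^2$, which is in turn bounded by the STORM errors of $\u_i,\v_i,V_i,H_i$ using the boundedness from the projections $\Pi_{C_{fx,i}},\Pi_{C_{fy,i}},\S^u_{C_{gxy,i}},\S^l_{\lambda}$ together with the identity for $\nabla F_i(\x,\y)$ from Proposition~2; and (iii) the bias $\frac{1}{m}\sum_i\|\nabla F_i(\x_t,\y_{i,t})-\nabla F_i(\x_t)\|^2$, which is $O(\frac{1}{m}\sum_i\|\y_{i,t}-\y_i^*(\x_t)\|^2)$ by smoothness of $f_i$ and boundedness of the second-order terms. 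For each per-task STORM estimator I would derive the standard one-step recursion, but note that because the unbiased stochastic surrogate scales by $1/p_{i_t}$ on the sampled coordinate and is zero elsewhere, the per-iteration variance for the whole vector $(\u_{1,t+1},\ldots,\u_{m,t+1})$ carries a factor $\sum_i C_{fx,i}^2/p_i = C_{fx}(\p)$ (and analogously for $\v,V,H,\w$), which is precisely how $C(\p)$ enters the final complexity. The drift terms $\|\x_t-\x_{t-1}\|^2$ and $\|\y_{i,t}-\y_{i,t-1}\|^2$ appearing in the recursion are $O(\eta_{t-1}^2\gamma^2)$ and $O(\tau_{t-1}^2\tau^2)$ respectively, controlled by the Lipschitz continuity constants summed with weights $1/p_i$, giving $L(\p)$.

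For the outer STORM update $\d_{t+1}=(1-\beta_t)(\d_t-\z_{j_t,t})+\z_{j_t,t+1}$ with uniformly sampled $j_t$, a STORM recursion yields
\begin{equation*}
\E\|\d_{t+1}-\bar{\z}_{t+1}\|^2 \le (1-\beta_t)^2\E\|\d_t-\bar{\z}_t\|^2 + 2\beta_t^2 \sigma_{\bar\z}^2 + 2(1-\beta_t)^2 \E\|\bar{\z}_{t+1}-\bar{\z}_t\|^2,
\end{equation*}
where $\sigma_{\bar\z}^2=\E\|\z_{j_t,t+1}-\bar\z_{t+1}\|^2$ is controlled using the uniform boundedness from the projections (this is where Assumption~\ref{ass:4} and the bounded set $\Y_i$ are critical), and the last drift term cascades back into the per-task recursions because only the $i_t$-th coordinate of $\bar\z$ actually moves. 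For the lower-level tracking, since $\y_i^*(\x)$ is $L_y$-Lipschitz in $\x$ (with $L_y$ determined by $\lambda,L_g,C_{gxy}$), the one-step inequality
\begin{equation*}
\|\y_{i,t+1}-\y_i^*(\x_{t+1})\|^2 \le (1-\tau_t\tau\lambda/2)\|\y_{i,t}-\y_i^*(\x_t)\|^2 + O(\tau_t\tau)\|\w_{i,t+1}-\nabla_y g_i(\x_t,\y_{i,t})\|^2 + O(L_y^2/\tau_t\tau\lambda)\|\x_{t+1}-\x_t\|^2
\end{equation*}
holds coordinate-wise; the key bookkeeping observation is that even though $\y_{i,t}$ is only actively updated when $i=i_t$, the expected decrease still averages correctly in the Lyapunov function thanks to the $p_i$-weighted scaling and the lazy-update mechanism in Steps 15 of Algorithm~\ref{alg:2}.

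I would then assemble a single Lyapunov function of the form
\begin{equation*}
\Phi_t = F(\x_t)-F_* + C\cdot\tfrac{1}{m}\sum_i \|\y_{i,t}-\y_i^*(\x_t)\|^2 + \tfrac{C'}{m}\sum_i\bigl(\|\u_{i,t}-\cdots\|^2 + \|\v_{i,t}-\cdots\|^2 + \|V_{i,t}-\cdots\|^2 + \|H_{i,t}-\cdots\|^2 + \|\w_{i,t}-\cdots\|^2\bigr) + C''\|\d_t-\bar\z_t\|^2,
\end{equation*}
with the target inequality $\E[\Phi_{t+1}-\Phi_t] \le -c\eta_t\E\|\nabla F(\x_t)\|^2 + O(\gamma C(\p)\eta_t^3/m)$. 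Here the negative term is harvested from the descent inequality and the $(1-\tau_t\tau\lambda/2)$ contraction on the $\y$-gap and the $(1-\beta_{\cdot,t})^2$ contractions on the STORM estimators, while the positive residual comes from the $\beta_{\cdot,t}^2 \sigma^2/p_i$ variance terms. Telescoping from $t=0$ to $T$, using the warm start that makes $\Phi_0 = O(1)$ (this is why the theorem requires the initial batch of size $O(1/(m\epsilon))$ and an accurate $\y_0$), and solving $\epsilon^2 \ge \frac{\Phi_0}{\eta T} + \frac{\gamma C(\p) \eta^2}{m}$ in $\eta,T$ yields the stated complexity $T=O(C(\p)/(m\epsilon^3))$.

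The main obstacle will be the bookkeeping in step~(ii) combined with the outer STORM: the drift $\|\bar\z_{t+1}-\bar\z_t\|^2$ in the outer recursion couples back to both $\|\x_{t+1}-\x_t\|^2$ (through the inner $\u,v,V,H$ updates) and the per-task tracking errors with $1/p_i$ factors, so obtaining clean constants requires either choosing $\beta_t$ of matching order for all six STORM updates (as the theorem statement prescribes) or absorbing cross-terms via the large constant $C$ in the $\y$-gap term of $\Phi_t$. Once the parameters $\beta_{\cdot,t}=\Theta(\gamma\eta_t^2)$, $\eta_t=\Theta(\epsilon\sqrt{m/(\gamma C(\p))})$ are chosen to balance bias and variance, everything reduces to the clean Lyapunov inequality above.
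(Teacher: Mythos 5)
Your proposal follows essentially the same route as the paper's proof: the descent lemma, the decomposition of $\|\d_{t+1}-\nabla F(\x_t)\|^2$ into the outer STORM gap, the per-task STORM errors, and the $\y$-tracking error (the paper's Lemmas~\ref{lem:1}, \ref{lem:2-2}, \ref{lem:11}, \ref{lem:12}, \ref{lem:3}), the $1/p_i$-inflated variance and Lipschitz constants giving $C(\p)$ and $L(\p)$, and the weighted telescoping with warm start and the balance $\epsilon^2 \gtrsim \Phi_0/(\eta T)+\gamma C(\p)\eta^2/m$. The plan is correct and matches the paper's argument; it only leaves the constant-level bookkeeping (which the paper carries out explicitly) unexecuted.
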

{\bf Remark:} The above theorem implies that in order to find an epsilon stationary point, we need $ O(C(\p)/(m\epsilon^3))$ sample complexity, which is no worse than $O(m/\epsilon^3)$, and could be in a lower order if we could optimize over the sampling probability $\p$. The proof is included in the Appendix~\ref{sec:thm2}. 
\vspace{-0.1in}
\section{Faster Convergence for Gradient-Dominant Functions}
In this section, we present faster algorithms and their convergence for problems satisfying a stronger condition, namely the gradient dominant condition  or Polyak-\L ojasiewicz (PL) condition, i.e., 
\begin{align*}
\mu(F(\x) - \min_{\x'} F(\x'))\leq \|\nabla F(\x)\|^2. 
\end{align*}
To leverage the above condition for achieving a faster rate, we employ the restarting trick with a stagewise decreasing step size strategy. The procedure is described in~Algorithm \ref{alg:RECOVER}.

\begin{thm}\label{thm:main}
Suppose Assumptions~\ref{ass:3} and~\ref{ass:4} hold and the PL condition holds. Define a constant $\epsilon_1=\frac{5\eta_0^2\gamma C(\p)C_5}{C_6 m\mu}$ and $\epsilon_k=\epsilon_1/2^{k-1}$. By setting $\eta_1=\min\{\frac{1}{\sqrt{3(C_1+C_2+C_3+C_4+C'_{max})}},\frac{1}{2L_F\gamma}\}$, $T_1=O(\max\{\frac{1}{\mu\eta_1\gamma},\frac{1}{\mu\eta_1^2\gamma}\})$, and for $k\geq2$, $\eta_k = \sqrt{\frac{C_6m\mu\epsilon_k}{5\gamma C(\p)C_5}}$, $T_k = O(\max \{\frac{1}{\mu \eta_k \gamma}, \frac{1}{ \eta_k^2 \gamma}\})$, where and $C_0\sim C_5$, $\gamma$, $C_{max}'$ are as used in Theorem \ref{thm:2} and  $C_6=\min\{2,C_0,C_1,C_2,C_3,C_4\}$, then after  $K=O(\log(\epsilon_1/\epsilon))$ stages, the output of RE-RSVRG satisfies  $\E[F(\x_K) - F(\x_*)]\leq \epsilon$. 
\end{thm}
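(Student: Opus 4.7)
The proof plan is to invoke the single-stage analysis behind Theorem~\ref{thm:2} inductively across the $K$ stages of RE-RSVRB, using the PL inequality as the bridge to convert gradient-norm control into function-value control and thereby boost the $\widetilde O(1/\epsilon^3)$ bound into a linear-in-$\log(1/\epsilon)$ rate on the function gap.

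First I would extract from the proof of Theorem~\ref{thm:2} a per-stage bound of the form
\begin{equation*}
\E\bigg[\frac{1}{T_k+1}\sum_{t=0}^{T_k}\|\nabla F(\x^{(k)}_t)\|^2\bigg] \;\le\; \frac{C_6^{-1}\bigl(F(\x^{(k)}_0)-F(\x_*)+\Phi^{(k)}_0\bigr)}{T_k\,\eta_k\,\gamma} \;+\; C_5\,\eta_k^2\,\frac{C(\p)}{m},
\end{equation*}
where $\Phi^{(k)}_0$ is a nonnegative residual collecting the initial variance-reduced-estimator errors $\|\u-\nabla_x f\|^2,\|\v-\nabla_y f\|^2,\|V-\nabla^2_{xy}g\|^2,\|H-\nabla^2_{yy}g\|^2,\|\w-\nabla_y g\|^2,\|\d-\nabla F\|^2$ and $\|\y_0-\y^*(\x_0)\|^2$ at the start of stage $k$. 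This is exactly the telescoping Lyapunov inequality established inside the proof of Theorem~\ref{thm:2}, just written with generic initial values instead of at $t=0$. The random output $\x^{(k+1)}_0$ then satisfies $\E\|\nabla F(\x^{(k+1)}_0)\|^2$ equal to the left-hand side above.

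Next I would use PL in two places. On the right, the induction hypothesis $\E[F(\x^{(k)}_0)-F(\x_*)]\le\epsilon_k$ together with a parallel induction $\E[\Phi^{(k)}_0]\le O(\epsilon_k)$ controls the numerator; on the left, PL gives $\mu\,\E[F(\x^{(k+1)}_0)-F(\x_*)]\le\E\|\nabla F(\x^{(k+1)}_0)\|^2$. Plugging in $\eta_k=\sqrt{C_6 m\mu\epsilon_k/(5\gamma C(\p)C_5)}$ makes the stochastic term $C_5\eta_k^2 C(\p)/m$ no larger than $\mu\epsilon_k/5$, and choosing $T_k=\Theta(\max\{1/(\mu\eta_k\gamma),\,1/(\eta_k^2\gamma)\})$ makes the optimization term $\epsilon_k/(C_6 T_k\eta_k\gamma)$ no larger than $\mu\epsilon_k/5$ (the two branches of the max handle the $1/(\mu\eta_k\gamma)$ scaling from $F(\x)-F_*$ and the $\eta_k^2$ scaling carried by $\Phi^{(k)}_0$, respectively). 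Together these give $\E[F(\x^{(k+1)}_0)-F(\x_*)]\le\epsilon_k/2=\epsilon_{k+1}$, closing the outer induction. After $K=\lceil\log_2(\epsilon_1/\epsilon)\rceil$ stages one has $\E[F(\x_K)-F(\x_*)]\le\epsilon$; summing $T_k$ across stages is dominated by $T_K=O(1/(\eta_K^2\gamma))=O(C(\p)/(m\mu\epsilon))$, matching the advertised complexity.

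The main obstacle is the residual $\Phi^{(k)}_0$: Theorem~\ref{thm:2} was stated with a warm-start mini-batch of size $O(1/(m\epsilon))$, and paying that cost $K$ times would destroy the geometric improvement. To avoid it I would \emph{not} reinitialize the estimators $\u,\v,V,H,\w,\d,\y$ across stages but carry them forward, and then prove the companion induction that $\E[\Phi^{(k+1)}_0]$ is itself $O(\epsilon_{k+1})$. The necessary variance-recursion inequalities (one per STORM / RC-STORM stream and one for $\|\y_t-\y^*(\x_t)\|^2$) are already present in the proof of Theorem~\ref{thm:2}; combined with the $\eta_k^2 C(\p)/m$ noise floor they show each stream's mean-square error decays at the same $\mu\epsilon_k$ scale as the function gap, so $\Phi^{(k)}_0$ can be absorbed into the same inductive quantity (up to a constant hidden in $C_6$). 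This coupled induction is the delicate step; once it is in place the stage count $K=O(\log(\epsilon_1/\epsilon))$ and the per-stage budget $T_k$ combine to give the stated result.
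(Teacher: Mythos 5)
Your proposal is correct and follows essentially the same route as the paper: it invokes the one-stage inequality from Theorem~\ref{thm:2}'s proof with generic initial estimator errors, carries the estimator states $\u,\v,V,H,\w,\d,\y$ forward across stages, and runs a coupled induction in which the function gap is at scale $\epsilon_k$ and the estimator errors at scale $\mu\epsilon_k$, with $\eta_k\propto\sqrt{\mu\epsilon_k}$ killing the noise floor, $T_k=\Theta(\max\{1/(\mu\eta_k\gamma),1/(\eta_k^2\gamma)\})$ killing the two initialization terms, and PL converting the sampled iterate's gradient bound into $\E[F-F_*]\le\epsilon_k/2$. The "delicate step" you flag is handled in the paper exactly as you anticipate: the left-hand side of the one-stage bound already contains the estimator-error sums with coefficients at least $C_6\gamma\eta$, so the randomly sampled stage output simultaneously controls the gradient and all residuals, closing the parallel induction.
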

{\bf Remark:} It follows that the total sample complexity is $\sum_{k=1}^{K} T_k = O(\frac{C(\p)}{\mu^{3/2} m \epsilon^{1/2}} + \frac{C(\p)}{\mu m \epsilon})$. Similar to remark for Theorem \ref{thm:2}, the sample complexity is no worse than $O(\frac{m}{\mu^{3/2} \epsilon^{1/2}} + \frac{m}{\mu \epsilon})$. If $\mu\geq \epsilon$, the dependence on $\mu$ and $\epsilon$ matches the optimal rate of $O(\frac{1}{\mu \epsilon})$ to minimize a strongly convex problem, which is a stronger condition than PL condition. 
In contrast, to get $\E\|\x-\x_*\|^2\leq \epsilon$, the STABLE algorithm \cite{DBLP:journals/corr/abs-2102-04671} takes a complexity of $O(\frac{m}{\mu^4 \epsilon})$, TTSA  \cite{{DBLP:journals/corr/abs-2007-05170}} takes  $\widetilde{O}(\frac{m}{\mu^3\epsilon^{3/2}})$, BSA \cite{99401} takes $O(\frac{m}{\mu^5\epsilon^2})$, all  under the strong convexity, while ours only takes $O( \frac{m}{\mu^2 \epsilon})$ to achieve this under the PL condition.  

\begin{algorithm}[t]
    \centering
    \caption{RE-RSVRB($\w_0, \epsilon_{0} ,c$)}
    \label{alg:RECOVER}
    \begin{algorithmic}[1]
  \STATE Let $\Theta_0 = (\x_{0}, \y_{0}, \u_{0}, \v_{0}, V_{0}, H_{0}, \w_{0}, \d_{0}, \z_{0})$
       \FOR {$k = 1,\ldots,K$} 
        \STATE \hspace*{-0.1in}
        $\Theta_k$ = RSVRB$(\Theta_{k-1}, \eta_k, \tau_k, T_k)$
       \STATE \hspace*{-0.1in}change $\eta_k, \tau_k,  T_k$ according to Theorem~\ref{thm:main}
        \ENDFOR
        \STATE \textbf{Return:} $\x_K$
    \end{algorithmic}
\end{algorithm}

\section{Proof Sketch}
Here we provide a proof sketch of the proposed algorithms and highlight the key differences from \cite{DBLP:journals/corr/abs-2102-04671}. We start with Lemma \ref{lem:01} which follows standard analysis for non-convex optimization. 

\begin{lemma}\label{lem:01}
Let $\x_{t+1} = \x_t - \eta_t\gamma\d_{t+1}$. For $\gamma\eta_tL_F\leq 1/2$, we have
\begin{align*}
F(\x_{t+1})\leq F(\x_t) +   \frac{ \gamma\eta_t}{2}\|\nabla F(\x_t) - \d_{t+1}\|^2- \frac{\gamma\eta_t}{2}\|\nabla F(\x_t)\|^2  - \frac{\gamma\eta_t}{4}\|\d_{t+1}\|^2.
\end{align*}
\end{lemma}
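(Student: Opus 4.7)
The plan is to apply the standard descent-lemma argument for smooth non-convex optimization with a biased/inexact direction $\d_{t+1}$, where the smoothness constant $L_F$ is the one for the hyper-objective $F$ that follows from Assumptions~\ref{ass:1}--\ref{ass:2} (a result proved in prior bilevel-optimization work such as~\cite{99401,DBLP:journals/corr/abs-2102-04671} and which I would invoke rather than re-derive here).

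First, I would apply $L_F$-smoothness of $F$ to the update $\x_{t+1}=\x_t-\eta_t\gamma\d_{t+1}$, giving
\[
F(\x_{t+1})\le F(\x_t)-\eta_t\gamma\langle\nabla F(\x_t),\d_{t+1}\rangle+\tfrac{L_F\eta_t^2\gamma^2}{2}\|\d_{t+1}\|^2.
\]
Next I would rewrite the inner product via the polarization-style identity
\[
-\langle\nabla F(\x_t),\d_{t+1}\rangle=\tfrac{1}{2}\|\nabla F(\x_t)-\d_{t+1}\|^2-\tfrac{1}{2}\|\nabla F(\x_t)\|^2-\tfrac{1}{2}\|\d_{t+1}\|^2,
\]
which cleanly separates the error term $\|\nabla F(\x_t)-\d_{t+1}\|^2$ (later to be controlled by the variance-reduction analysis of $\u,\v,V,H$) from the usual $-\|\nabla F\|^2$ descent term and the $\|\d_{t+1}\|^2$ term.

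Substituting back and collecting, the coefficient in front of $\|\d_{t+1}\|^2$ becomes $-\tfrac{\eta_t\gamma}{2}+\tfrac{L_F\eta_t^2\gamma^2}{2}=-\tfrac{\eta_t\gamma}{2}(1-L_F\eta_t\gamma)$. The step-size condition $L_F\eta_t\gamma\le 1/2$ then yields $1-L_F\eta_t\gamma\ge 1/2$, so this quantity is at most $-\tfrac{\eta_t\gamma}{4}\|\d_{t+1}\|^2$, producing exactly the claimed inequality.

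There is really no hard step here: the only nontrivial ingredient is the $L_F$-smoothness of $F$, which in the bilevel setting is nonstandard to derive (it involves implicit differentiation together with the strong convexity of $g$ and Lipschitzness of $\nabla_x f,\nabla_y f,\nabla_{xy}^2 g,\nabla_{yy}^2 g$), but is by now a standard fact under Assumptions~\ref{ass:1}--\ref{ass:2} that I would simply cite. Everything else is the textbook "descent lemma with inexact direction" manipulation and the algebraic simplification enabled by $\gamma\eta_tL_F\le 1/2$.
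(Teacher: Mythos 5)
Your proposal is correct and follows essentially the same route as the paper's proof: apply the $L_F$-smoothness descent inequality (with $L_F$ imported from the standard bilevel smoothness lemma of~\citep{99401}), expand the inner product via the identity $-\langle\nabla F(\x_t),\d_{t+1}\rangle=\tfrac{1}{2}\|\nabla F(\x_t)-\d_{t+1}\|^2-\tfrac{1}{2}\|\nabla F(\x_t)\|^2-\tfrac{1}{2}\|\d_{t+1}\|^2$, and absorb the quadratic term using $\gamma\eta_t L_F\leq 1/2$. No gaps.
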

The problem then reduces to bounding the error for the stochastic estimator $\d_{t+1}$. Then, we decompose this error into several terms and bound them separately. 
\begin{lemma}\label{lem:02}
For all $t\geq 0$, we have
\begin{align*}
&\|\d_{t+1} - \nabla F(\x_t)\|^2\leq 2\|\d_{t+1} - \frac{1}{m}\sum_{i=1}^m\z_{i,t+1}\|^2+\frac{1}{m}\sum_{i=1}^m4C_0\|\y_{i,t} - \y^*(\x_{t})\|^2 \\
&+  \frac{1}{m}\sum_{i=1}^m(4C_1\|\u_{i,t+1} -  \nabla_x f_i(\x_t, \y_{i,t}) \|^2 + 4C_2\|V_{i,t+1} - \nabla^2_{xy}g_i(\x_t, \y_{i,t})\|^2 ) \\
& +\frac{1}{m}\sum_{i=1}^m (4C_3\|H_{i,t+1} -   \nabla_{yy}^2g_i(\x_t, \y_{i,t})\|^2 +4C_4\|\v_{i,t+1} -\nabla_y f_i(\x_t, \y_{i,t}) \|^2)
\end{align*}
where $C_0, C_1, C_2, C_3, C_4$ are constants.
\end{lemma}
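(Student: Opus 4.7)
The plan is to derive this bound by a chain of triangle-inequality splits, anchoring everything to the intermediate ``oracle'' estimator $\bar{\z}_i(\x_t,\y_{i,t}) := \nabla_x f_i(\x_t,\y_{i,t}) - \nabla_{xy}^2 g_i(\x_t,\y_{i,t})[\nabla_{yy}^2 g_i(\x_t,\y_{i,t})]^{-1}\nabla_y f_i(\x_t,\y_{i,t})$, which is what $\z_{i,t+1}$ is trying to estimate if its pieces were exact and $\y_{i,t}$ were the current inner iterate. First I would apply $\|a+b\|^2\leq 2\|a\|^2+2\|b\|^2$ to get
\begin{equation*}
\|\d_{t+1}-\nabla F(\x_t)\|^2 \leq 2\bigl\|\d_{t+1} - \tfrac{1}{m}\textstyle\sum_i \z_{i,t+1}\bigr\|^2 + 2\bigl\|\tfrac{1}{m}\textstyle\sum_i \z_{i,t+1}- \nabla F(\x_t)\bigr\|^2,
\end{equation*}
and then apply Jensen's inequality to the second term so that the task reduces to per-task bounds on $\|\z_{i,t+1}-\nabla F_i(\x_t)\|^2$, where $\nabla F_i(\x_t)$ is the $i$-th summand given by Proposition 2.

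Next, I would split each per-task error by inserting $\bar{\z}_i(\x_t,\y_{i,t})$:
\begin{equation*}
\|\z_{i,t+1}-\nabla F_i(\x_t)\|^2 \leq 2\|\z_{i,t+1}-\bar{\z}_i(\x_t,\y_{i,t})\|^2 + 2\|\bar{\z}_i(\x_t,\y_{i,t})-\nabla F_i(\x_t)\|^2.
\end{equation*}
The second term is a pure ``inner iterate vs.\ inner optimum'' discrepancy, and I would bound it by $L_y^2\|\y_{i,t}-\y_i^*(\x_t)\|^2$ using the Lipschitz continuity of $\nabla_x f_i$, $\nabla_y f_i$, $\nabla_{xy}^2 g_i$ (Assumption 4), the boundedness $\|\nabla_y f_i\|\leq C_{fy,i}$, $\|\nabla_{xy}^2 g_i\|\leq C_{gxy,i}$, and the standard identity $X^{-1}-Y^{-1}=X^{-1}(Y-X)Y^{-1}$ combined with $\|[\nabla_{yy}^2 g_i]^{-1}\|\leq 1/\lambda$ from strong convexity (Assumption 3). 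This gives the $C_0\|\y_{i,t}-\y_i^*(\x_t)\|^2$ term after absorbing $L_y^2$ into $C_0$.

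For the first term $\|\z_{i,t+1}-\bar{\z}_i(\x_t,\y_{i,t})\|^2$, I would use the algebraic identity
\begin{equation*}
V_{i,t+1}H_{i,t+1}^{-1}\v_{i,t+1} - \nabla_{xy}^2 g_i\,[\nabla_{yy}^2 g_i]^{-1}\nabla_y f_i
\end{equation*}
(with $g_i$ evaluated at $(\x_t,\y_{i,t})$) decomposes telescopically into three pieces, one depending linearly on $V_{i,t+1}-\nabla_{xy}^2 g_i$, one on $H_{i,t+1}-\nabla_{yy}^2 g_i$ (via $H_{i,t+1}^{-1}-[\nabla_{yy}^2 g_i]^{-1}$), and one on $\v_{i,t+1}-\nabla_y f_i$. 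Each of these picks up a constant multiplicative factor coming from the spectral projections in Algorithm 2: $\|H_{i,t+1}^{-1}\|\leq 1/\lambda$ (from the $\S^l_\lambda$ projection), $\|V_{i,t+1}\|\leq C_{gxy,i}$ (from $\S^u_{C_{gxy,i}}$), and $\|\v_{i,t+1}\|\leq C_{fy,i}$ (from $\Pi_{C_{fy,i}}$). Combined with the $u$-error $\|\u_{i,t+1}-\nabla_x f_i(\x_t,\y_{i,t})\|^2$, one more application of $\|\sum_{k=1}^4 a_k\|^2\leq 4\sum_k \|a_k\|^2$ distributes the four summands with constants that I would absorb into $C_1,\ldots,C_4$.

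The main obstacle here is the bookkeeping in the algebraic decomposition of $V H^{-1}\v$, in particular handling the inverse term $H_{i,t+1}^{-1}-[\nabla_{yy}^2 g_i(\x_t,\y_{i,t})]^{-1}$ cleanly; this is where the projection step $\S^l_\lambda$ is indispensable, since without a uniform spectral lower bound on $H_{i,t+1}$ the inverse cannot be controlled. Collecting the factors of $2$ from each Young's inequality step, the factor of $1/m$ from Jensen, and the constants inherited from the Lipschitz constants and spectral norm bounds, yields precisely the stated decomposition, with the five constants $C_0,C_1,C_2,C_3,C_4$ chosen to dominate the residual scalars.
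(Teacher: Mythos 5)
Your proposal is correct and follows essentially the same route as the paper: the outer Young-plus-Jensen reduction to per-task errors $\|\z_{i,t+1}-\nabla F_i(\x_t)\|^2$, the split through the surrogate gradient evaluated at $(\x_t,\y_{i,t})$ bounded by $C_0\|\y_{i,t}-\y_i^*(\x_t)\|^2$ via Lipschitzness, boundedness, the inverse identity and strong convexity, and the telescoping decomposition of $V_{i,t+1}H_{i,t+1}^{-1}\v_{i,t+1}$ controlled by the projection-induced bounds $\|H_{i,t+1}^{-1}\|\le 1/\lambda$, $\|V_{i,t+1}\|\le C_{gxy,i}$, $\|\v_{i,t+1}\|\le C_{fy,i}$. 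The only cosmetic differences are your labeling of the inner-iterate discrepancy constant as $L_y^2$ (it is a composite constant, not the Lipschitz constant of $\y^*$) and a slightly different grouping of the Young's-inequality splits, which only changes the absolute constants.
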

Next, we will bound each term in the RHS in the above inequality separately. For the last four terms, we use the analysis of STORM to bound the error. For the first term, we use Lemma~\ref{lem:11} and Lemma~\ref{lem:12}. For bounding these terms, we will have a term in the upper bound that is proportional to $\|\x_{t+1} - \x_t\|^2$ and $\|\y_{t+1} - \y_t\|^2$, where $\y_t= (\y_{1,t}, \ldots, \y_{m,t})$. In~\citep{DBLP:journals/corr/abs-2102-04671}, the authors bound these two terms by their step sizes in the order of $\epsilon^2$, which is  in the order of the required accuracy bound for $\|\nabla F(\x)\|^2$.  However, the differences of our algorithms are that (i) the gradient estimators are not necessarily bounded (Algorithm~\ref{alg:1}) due to weaker boundness assumption in Assumption~\ref{ass:2}; (ii) even with similar boundness assumption in Assumption~\ref{ass:4}, our step size is in the order of $\epsilon$ (using fixed step size), which is one order larger than the required accuracy bound. Hence, using the step size to bound $\|\x_{t+1} - \x_t\|^2$ and $\|\y_{t+1} - \y_t\|^2$ as in~\citep{DBLP:journals/corr/abs-2102-04671} will not yield an improved sample complexity. To this end, we will use the following lemma to bound $\|\y_t - \y^*(\x_t)\|^2$ due to~\citep{DBLP:journals/corr/abs-2008-08170}. 

\begin{lemma}\label{lem:03}
Let $\y_{t+1} =(1-\tau_t)\y_t +  \tau_t\Pi_{\Y}[\y_t - \tau\w_{t+1}]$ with $\tau\leq 2/(3L_g)$ and $g(\x, \y)$ satisfy Assumption~\ref{ass:1}, we have
\begin{align*}
&\|\y_{t+1} - \y^*(\x_{t+1})\|^2 \leq  (1 - \frac{\tau_t\tau \lambda}{4})\|\y_t - \y^*(\x_t)\|^2 + \frac{8\tau_t\tau}{\lambda}\|\nabla_y g(\x_t, \y_t) -\w_{t+1}\|^2 \\
& + \frac{8L_y^2\gamma^2\eta_t^2}{\tau_t\tau\lambda}\|\z_{t+1}\|^2  -  \frac{2\tau}{\tau_t}(1+ \frac{\tau_t\tau \lambda}{4})(\frac{1}{2\tau} - \frac{3L_g}{4} )\|\y_t- \y_{t+1}\|^2.
\end{align*}
\end{lemma}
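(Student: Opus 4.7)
The plan is to control the one-step change in $\|\y_t - \y^*(\x_t)\|^2$ in two conceptual pieces: first absorb the drift of the moving target $\y^*(\x_t)\to \y^*(\x_{t+1})$ induced by the step on $\x$, then analyze the projected stochastic-gradient step toward the shifted optimum $\y^*(\x_t)$ using the strong convexity of the lower objective. For the drift piece I would apply Young's inequality with parameter $\tau_t\tau\lambda/4$,
\[
\|\y_{t+1} - \y^*(\x_{t+1})\|^2 \leq \bigl(1 + \tfrac{\tau_t\tau\lambda}{4}\bigr)\|\y_{t+1} - \y^*(\x_t)\|^2 + \bigl(1 + \tfrac{4}{\tau_t\tau\lambda}\bigr)\|\y^*(\x_t) - \y^*(\x_{t+1})\|^2,
\]
and then invoke the $L_y$-Lipschitz continuity of $\y^*(\cdot)$ (a standard consequence of Assumption~\ref{ass:1} via the implicit function theorem) together with the update $\x_{t+1}-\x_t=-\eta_t\gamma\z_{t+1}$; bounding the mild factor $1+4/(\tau_t\tau\lambda)$ by $8/(\tau_t\tau\lambda)$ produces exactly the $\frac{8L_y^2\gamma^2\eta_t^2}{\tau_t\tau\lambda}\|\z_{t+1}\|^2$ term.

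For $\|\y_{t+1}-\y^*(\x_t)\|^2$, I would exploit the convex-combination structure $\y_{t+1}=(1-\tau_t)\y_t+\tau_t\tilde\y_{t+1}$ with $\tilde\y_{t+1}=\Pi_\Y[\y_t-\tau\w_{t+1}]$ through the identity
\[
\|\y_{t+1}-\y^*(\x_t)\|^2 = (1-\tau_t)\|\y_t-\y^*(\x_t)\|^2 + \tau_t\|\tilde\y_{t+1}-\y^*(\x_t)\|^2 - \tau_t(1-\tau_t)\|\y_t-\tilde\y_{t+1}\|^2,
\]
and bound the middle term via the projection optimality inequality at $\y^*(\x_t)\in\Y$,
\[
\|\tilde\y_{t+1}-\y^*(\x_t)\|^2 \leq \|\y_t-\y^*(\x_t)\|^2 - \|\y_t-\tilde\y_{t+1}\|^2 - 2\tau\langle\w_{t+1},\tilde\y_{t+1}-\y^*(\x_t)\rangle.
\]
I would then split $\w_{t+1} = \nabla_y g(\x_t,\y_t) + (\w_{t+1}-\nabla_y g(\x_t,\y_t))$. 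The deterministic part $-2\tau\langle \nabla_y g(\x_t,\y_t), \tilde\y_{t+1}-\y^*(\x_t)\rangle$ is controlled by writing $\tilde\y_{t+1}-\y^*(\x_t) = (\tilde\y_{t+1}-\y_t) + (\y_t-\y^*(\x_t))$: $L_g$-smoothness together with $g(\x_t,\tilde\y_{t+1})\geq g(\x_t,\y^*(\x_t))$ lower bounds the first inner product by $-\frac{L_g}{2}\|\tilde\y_{t+1}-\y_t\|^2$, and $\lambda$-strong convexity lower bounds the second by $\frac{\lambda}{2}\|\y_t-\y^*(\x_t)\|^2$. For the stochastic part I would apply Young's inequality of the form $-2\tau\langle\w_{t+1}-\nabla_y g,\tilde\y_{t+1}-\y^*(\x_t)\rangle \leq \frac{4\tau}{\lambda}\|\w_{t+1}-\nabla_y g\|^2 + \frac{\tau\lambda}{4}\|\tilde\y_{t+1}-\y^*(\x_t)\|^2$ and absorb the quadratic on the right back into the left-hand side.

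Combining the pieces, the convex-combination step turns the $(1-\tau\lambda)$ contraction on $\|\y_t-\y^*(\x_t)\|^2$ into $(1-\tau_t\tau\lambda/2)$, and multiplying by the outer Young factor $(1+\tau_t\tau\lambda/4)$ while discarding the second-order $O((\tau_t\tau\lambda)^2)$ term yields the advertised $(1-\tau_t\tau\lambda/4)$ rate; the noise contribution compounds to $\frac{4\tau_t\tau}{\lambda}(1+\tau_t\tau\lambda/4)\|\w_{t+1}-\nabla_y g\|^2\leq \frac{8\tau_t\tau}{\lambda}\|\w_{t+1}-\nabla_y g\|^2$ using $\tau_t\tau\lambda\leq 4$. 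The residual $-\tau_t(1-\tau_t)\|\y_t-\tilde\y_{t+1}\|^2$ from the convex-combination identity combines with the positive $\tau_t\tau L_g\|\tilde\y_{t+1}-\y_t\|^2$ contribution from smoothness and the projection term $-\tau_t\|\y_t-\tilde\y_{t+1}\|^2$ to produce, after rewriting $\|\y_t-\y_{t+1}\|^2 = \tau_t^2\|\y_t-\tilde\y_{t+1}\|^2$ and multiplying by the same outer Young factor $(1+\tau_t\tau\lambda/4)$, exactly the negative contribution $-\frac{2\tau}{\tau_t}(1+\frac{\tau_t\tau\lambda}{4})(\frac{1}{2\tau}-\frac{3L_g}{4})\|\y_t-\y_{t+1}\|^2$; the stepsize condition $\tau\leq 2/(3L_g)$ is precisely what keeps this coefficient nonnegative. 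I expect the main obstacle to be the careful bookkeeping across the two nested Young's inequalities (one for the drift of $\y^*$, one for the stochastic noise) so that every constant lands in the precise form stated, rather than any single nontrivial estimate.
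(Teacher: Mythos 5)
Your proposal is correct and follows essentially the same route as the paper's proof: the same outer Young's inequality with parameter $\tau_t\tau\lambda/4$ combined with the $L_y$-Lipschitzness of $\y^*(\cdot)$ for the drift term, and the same combination of the projection optimality condition, $L_g$-smoothness, $\lambda$-strong convexity, the optimality $g(\x_t,\widetilde\y_{t+1})\geq g(\x_t,\y^*(\x_t))$, and a Young's inequality on the noise for the inner step. The only cosmetic differences are that you use the exact convex-combination identity where the paper expands the square and bounds the cross term (algebraically equivalent), and you charge the noise Young term to $\|\widetilde\y_{t+1}-\y^*(\x_t)\|^2$ and absorb it into the left-hand side, whereas the paper splits $\y^*(\x_t)-\widetilde\y_{t+1}$ and charges the two pieces to $\|\y_t-\y^*(\x_t)\|^2$ and $\|\widetilde\y_{t+1}-\y_t\|^2$; both yield the stated constants.
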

It is notable that the upper bound in this lemma has a negative term $\|\y_t - \y_{t+1}\|^2$, which will cancel the $\|\y_t- \y_{t+1}\|^2$ from bounding the first and the last four terms in the RHS of the inequality in Lemma~\ref{lem:02}. Finally, with these lemmas, we can prove Theorem~\ref{thm:2} with appropriate parameter setting.

\section{Conclusions}
In this paper, we have proposed a randomized stochastic algorithm for solving SBO with many lower level problems. We used a recursive variance reduction method for estimating all first-order and second-order moments. We achieved the state-of-the-art sample complexity for solving a class of SBO problems. One drawback of the proposed algorithms is that they need to compute the projection of a matrix onto a positive-definite matrix space with a minimum eigen-value, which could have a cubic time complexity in the worst case.  For future work, we will consider reducing the worst-case  per-iteration costs further. 

\bibliography{ref,example_paper,all}

\newpage
\required{ \hspace*{0.8in}\Large Appendix}
\appendix

\section{Convergence Analysis of Theorem~\ref{thm:1}}\label{sec:thm1}
In this section, we present the convergence analysis of Theorem~\ref{thm:1}. To this end, we will first present several technical lemmas. 

\begin{lemma}\label{lem:1}
Let $\x_{t+1} = \x_t - \eta_t\gamma\z_{t+1}$. For $\gamma\eta_tL_F\leq 1/2$, we have
\begin{align*}
F(\x_{t+1})\leq F(\x_t) +   \frac{ \gamma\eta_t}{2}\|\nabla F(\x_t) - \z_{t+1}\|^2- \frac{\gamma\eta_t}{2}\|\nabla F(\x_t)\|^2  - \frac{\gamma\eta_t}{4}\|\z_{t+1}\|^2.
\end{align*}
\end{lemma}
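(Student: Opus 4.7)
The plan is to carry out the standard descent-lemma argument from smooth non-convex optimization applied to the (biased) stochastic estimator $\z_{t+1}$. The single ingredient I need beyond elementary manipulation is $L_F$-smoothness of the implicit objective $F(\x)=f(\x,\y^*(\x))$, which is not listed as an assumption but follows from Assumption~\ref{ass:1}, Assumption~\ref{ass:2}, and Proposition~\ref{prop:1} by differentiating $\y^*(\x)$ via the implicit function theorem and bounding $\nabla^2 F$ in terms of $L_{fx},L_{fy},L_{gy},L_{gxy},L_{gyy},\lambda,C_{fy},C_{gxy}$. I would take this as the standard bilevel-smoothness fact (as in Ghadimi--Wang) and let $L_F$ denote the resulting modulus.

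First I would invoke the smoothness inequality
\begin{align*}
F(\x_{t+1}) \leq F(\x_t) + \langle \nabla F(\x_t),\, \x_{t+1}-\x_t\rangle + \tfrac{L_F}{2}\|\x_{t+1}-\x_t\|^2,
\end{align*}
and substitute $\x_{t+1}-\x_t = -\eta_t\gamma\z_{t+1}$ to obtain
\begin{align*}
F(\x_{t+1}) \leq F(\x_t) - \eta_t\gamma\langle \nabla F(\x_t),\,\z_{t+1}\rangle + \tfrac{L_F\eta_t^2\gamma^2}{2}\|\z_{t+1}\|^2.
\end{align*}

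Next I would apply the polarization identity
\begin{align*}
-\langle \nabla F(\x_t),\,\z_{t+1}\rangle = \tfrac{1}{2}\|\nabla F(\x_t)-\z_{t+1}\|^2 - \tfrac{1}{2}\|\nabla F(\x_t)\|^2 - \tfrac{1}{2}\|\z_{t+1}\|^2,
\end{align*}
which converts the cross term into exactly the squared-norm terms appearing in the target inequality, while generating an extra negative $-\tfrac{\eta_t\gamma}{2}\|\z_{t+1}\|^2$. Finally I would absorb the quadratic from the smoothness step: under the stated condition $\gamma\eta_t L_F\leq 1/2$, the coefficient on $\|\z_{t+1}\|^2$ is $-\tfrac{\eta_t\gamma}{2}+\tfrac{L_F\eta_t^2\gamma^2}{2}\leq -\tfrac{\eta_t\gamma}{2}+\tfrac{\eta_t\gamma}{4} = -\tfrac{\eta_t\gamma}{4}$, which yields the claimed bound.

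There is no real obstacle: every step is a one-line manipulation and the lemma is purely deterministic (no expectation is taken, so no concentration or variance argument enters). The only thing worth flagging is the implicit reliance on $L_F$-smoothness of $F$; if the authors have not already recorded this fact, I would insert a one-sentence remark citing the standard bilevel-smoothness calculation so that the step-size condition $\gamma\eta_t L_F\leq 1/2$ is well-defined in terms of the primitive constants of Assumptions~\ref{ass:1}--\ref{ass:2}.
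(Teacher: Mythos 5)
Your proof is correct and follows essentially the same route as the paper: apply the $L_F$-smoothness descent inequality (which the paper records as a cited fact from Ghadimi--Wang-type analysis), substitute the update, use the polarization identity on the cross term, and absorb the quadratic term via $\gamma\eta_t L_F\leq 1/2$ to get the $-\frac{\gamma\eta_t}{4}\|\z_{t+1}\|^2$ coefficient. Your remark about explicitly recording the $L_F$-smoothness of $F$ matches what the paper does in its ``Useful Lemmas'' subsection.
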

From the above lemma, we can see that the key to the proof of Theorem~\ref{thm:1} is the bounding of $\|\z_{t+1} - \nabla F(\x_t)\|^2$. The lemma below will decompose this error into several terms that can be bounded separately. 
\begin{lemma}\label{lem:2}
For all $t\geq 0$, we have
\begin{align*}
&\|\z_{t+1} - \nabla F(\x_t)\|^2 = \|\u_{t+1} -V_{t+1}[H_{t+1}]^{-1}\v_{t+1}  - \nabla F(\x_t)\|^2\\
&\leq 2C_0\|\y_t - \y^*(\x_t)\|^2 + 2C_1\|\u_{t+1} -  \nabla_x f(\x_t, \y_t) \|^2 \\
& + 2C_2\|V_{t+1} - \nabla^2_{xy}g(\x_t, \y_t)\|_F^2 +2C_3\|H_{t+1} - \nabla_{yy}^2g(\x_t, \y_t)\|_F^2 +2C_4\|\v_{t+1} -\nabla_y f(\x_t, \y_t) \|^2.
\end{align*}
where $C_0=(2L_{fx}^2 +  \frac{6C_{fy}^2L_{gxy}^2}{\lambda^2} + \frac{6C_{fy}^2C^2_{gxy}L^2_{gyy}}{\lambda^4}+ \frac{6L_{fy}^2C^2_{gxy}}{\lambda^2})$, 
 $C_1 = 2$, $C_2 = \frac{6C_{fy}^2}{\lambda^2}$, $C_3 = \frac{6C_{fy}^2C_{gxy}^2}{\lambda^4}$ and $C_4 = \frac{6C_{gxy}^2}{\lambda^2}$. 
\end{lemma}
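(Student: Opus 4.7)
The plan is to prove this bound by introducing an intermediate "target" quantity for the estimator $\z_{t+1}$ and splitting the total error into an estimator error and a bias-from-$\y^*(\x_t)$ error. Define
$$\widetilde\nabla F(\x_t,\y_t) := \nabla_x f(\x_t,\y_t) - \nabla_{xy}^2 g(\x_t,\y_t)[\nabla_{yy}^2 g(\x_t,\y_t)]^{-1}\nabla_y f(\x_t,\y_t),$$
which is what $\z_{t+1} = \u_{t+1} - V_{t+1}[H_{t+1}]^{-1}\v_{t+1}$ is trying to approximate. By Proposition~\ref{prop:1}, $\nabla F(\x_t) = \widetilde\nabla F(\x_t,\y^*(\x_t))$. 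I would first apply $\|a+b\|^2\le 2\|a\|^2+2\|b\|^2$ to get
$$\|\z_{t+1}-\nabla F(\x_t)\|^2 \le 2\|\z_{t+1}-\widetilde\nabla F(\x_t,\y_t)\|^2 + 2\|\widetilde\nabla F(\x_t,\y_t)-\widetilde\nabla F(\x_t,\y^*(\x_t))\|^2.$$
This reduces the problem to handling a purely estimator-side error and a purely $\y$-perturbation error.

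For the second term, I would use the algebraic identity $ABC - A'B'C' = (A{-}A')BC + A'(B{-}B')C + A'B'(C{-}C')$ applied to the triple product $\nabla_{xy}^2 g \cdot [\nabla_{yy}^2 g]^{-1} \cdot \nabla_y f$, together with the matrix perturbation identity $[\nabla_{yy}^2 g(\x_t,\y_t)]^{-1} - [\nabla_{yy}^2 g(\x_t,\y^*(\x_t))]^{-1} = [\nabla_{yy}^2 g(\x_t,\y_t)]^{-1}\bigl(\nabla_{yy}^2 g(\x_t,\y^*(\x_t))-\nabla_{yy}^2 g(\x_t,\y_t)\bigr)[\nabla_{yy}^2 g(\x_t,\y^*(\x_t))]^{-1}$, which converts the inverse difference into a Lipschitz difference with an extra $1/\lambda^2$ factor coming from Assumption~\ref{ass:1}. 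Each of the resulting four pieces (the $\nabla_x f$ piece and the three triple-product pieces) is bounded by a Lipschitz constant from Assumption~\ref{ass:2}(i) times $\|\y_t-\y^*(\x_t)\|$, using the boundedness of the still-evaluated factors (namely $\|\nabla_y f(\x_t,\y^*(\x_t))\|\le C_{fy}$, $\|\nabla_{xy}^2 g(\x_t,\y^*(\x_t))\|\le C_{gxy}$, $\|[\nabla_{yy}^2 g]^{-1}\|\le 1/\lambda$ from Assumption~\ref{ass:2}(iii) and Assumption~\ref{ass:1}). Squaring and applying $(\sum_{i=1}^3 a_i)^2\le 3\sum a_i^2$ on the triple product gives exactly the constants $2L_{fx}^2$, $6L_{gxy}^2 C_{fy}^2/\lambda^2$, $6 C_{gxy}^2 C_{fy}^2 L_{gyy}^2/\lambda^4$, and $6 C_{gxy}^2 L_{fy}^2/\lambda^2$ whose sum is $C_0$.

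For the first term, I would write $\z_{t+1}-\widetilde\nabla F(\x_t,\y_t) = [\u_{t+1}-\nabla_x f(\x_t,\y_t)] - [V_{t+1}[H_{t+1}]^{-1}\v_{t+1} - \nabla_{xy}^2 g(\x_t,\y_t)[\nabla_{yy}^2 g(\x_t,\y_t)]^{-1}\nabla_y f(\x_t,\y_t)]$ and again apply the $A B C - A' B' C'$ decomposition to the bracket. This time boundedness of the estimator factors is what matters, and this is precisely why Algorithm~\ref{alg:1} projects at each step: the projection $\Pi_{C_{fy}}$ guarantees $\|\v_{t+1}\|\le C_{fy}$, $\S^u_{C_{gxy}}$ guarantees $\|V_{t+1}\|\le C_{gxy}$, and $\S^l_\lambda$ guarantees $\|[H_{t+1}]^{-1}\|\le 1/\lambda$. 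Splitting with a factor of $2$ into the $\u$-piece and the triple-product piece, and then the triple piece into three sub-terms with a factor of $3$, yields the coefficients $C_1=2$, $C_2=6C_{fy}^2/\lambda^2$, $C_3=6 C_{fy}^2 C_{gxy}^2/\lambda^4$ (the extra $1/\lambda^2$ coming again from the inverse-perturbation identity), and $C_4=6 C_{gxy}^2/\lambda^2$.

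The main obstacle is purely bookkeeping: keeping the product-rule expansion organized and making sure every factor that is \emph{not} the one being differenced is controlled either by a boundedness assumption from Assumption~\ref{ass:2}(iii), by Assumption~\ref{ass:1}, or by the algorithmic projection steps. Nothing here requires probabilistic arguments --- the inequality is pointwise in the random variables --- so once the decomposition is written cleanly, each term is bounded by a single Lipschitz or boundedness invocation. I would also note that the matrix norm used throughout is the spectral/Frobenius norm (with $\|Ax\|\le\|A\|\|x\|$), which is why the bound for $V_{t+1}$ and $H_{t+1}$ differences appears in Frobenius norm in the statement (Frobenius upper-bounds spectral, so this is the conservative choice).
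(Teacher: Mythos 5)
Your proposal is correct and follows essentially the same route as the paper: the same split into estimator error and $\y$-perturbation error with a factor of $2$, the same telescoping (product-rule) decomposition of the triple product with factors $2$ and $3$, the same matrix inverse-perturbation identity contributing the extra $1/\lambda^2$, and the same reliance on the projections in Algorithm~\ref{alg:1} and on Assumptions~\ref{ass:1} and~\ref{ass:2} to bound the undifferenced factors, yielding identical constants $C_0,\dots,C_4$.
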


Next, we will bound each term in the RHS in the inequality of the above lemma separately. We first bound the first term. 

\begin{lemma}\label{lem:3}
Let $\y_{t+1} =\y_t - \tau_t\tau \w_{t+1}$ with $\tau\leq 2/(3L_g)$, we have
\begin{align*}
&\|\y_{t+1} - \y^*(\x_{t+1})\|^2 \leq  (1 - \frac{\tau_t\tau \lambda}{4})\|\y_t - \y^*(\x_t)\|^2 + \frac{8\tau_t\tau}{\lambda}\|\nabla_y g(\x_t, \y_t) -\w_{t+1}\|^2 \\
& + \frac{8L_y^2\gamma^2\eta_t^2}{\tau_t\tau\lambda}\|\z_{t+1}\|^2  -  \frac{2\tau}{\tau_t}(1+ \frac{\tau_t\tau \lambda}{4})(\frac{1}{2\tau} - \frac{3L_g}{4} )\|\y_t- \y_{t+1}\|^2. 
\end{align*}
\end{lemma}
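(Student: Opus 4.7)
The plan is to decouple $\|\y_{t+1}-\y^*(\x_{t+1})\|^2$ into an ``optimization'' piece measuring progress toward the frozen minimizer $\y^*(\x_t)$ and a ``drift'' piece measuring how much the minimizer itself moved when $\x_t$ changed to $\x_{t+1}$. First I would apply Young's inequality with parameter $a=\tau_t\tau\lambda/4$ to obtain
\[
\|\y_{t+1}-\y^*(\x_{t+1})\|^2 \leq (1+\tau_t\tau\lambda/4)\|\y_{t+1}-\y^*(\x_t)\|^2 + (1+4/(\tau_t\tau\lambda))\|\y^*(\x_{t+1})-\y^*(\x_t)\|^2.
\]
For the drift I would use Lipschitz continuity of the implicit map $\y^*(\cdot)$ with constant $L_y=C_{gxy}/\lambda$, which follows from the implicit function identity $\nabla\y^*(\x)=-[\nabla_{yy}^2 g(\x,\y^*(\x))]^{-1}\nabla_{xy}^2 g(\x,\y^*(\x))$ combined with Assumption~\ref{ass:1} and the bound $\|\nabla_{xy}^2 g\|\leq C_{gxy}$ from Assumption~\ref{ass:2}(iii). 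This gives $\|\y^*(\x_{t+1})-\y^*(\x_t)\|^2\leq L_y^2\|\x_{t+1}-\x_t\|^2 = L_y^2\gamma^2\eta_t^2\|\z_{t+1}\|^2$, and the trivial bound $1+4/(\tau_t\tau\lambda)\leq 8/(\tau_t\tau\lambda)$ for $\tau_t\tau\lambda\leq 4/7$ reproduces the third term of the lemma.

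For the optimization piece, using $\y_{t+1}-\y_t=-\tau_t\tau\w_{t+1}$ and the three-point identity, I would write
\[
\|\y_{t+1}-\y^*(\x_t)\|^2 = \|\y_t-\y^*(\x_t)\|^2 - \|\y_t-\y_{t+1}\|^2 - 2\tau_t\tau\langle\w_{t+1},\,\y_{t+1}-\y^*(\x_t)\rangle,
\]
and split $\w_{t+1}=\nabla_y g(\x_t,\y_t)+e_t$ with $e_t=\w_{t+1}-\nabla_y g(\x_t,\y_t)$. For the $\nabla_y g$ term I would combine $\lambda$-strong convexity of $g(\x_t,\cdot)$ (applied to $\y_t-\y^*(\x_t)$) with $L_g$-smoothness (applied to $\y_{t+1}-\y_t$, giving $\langle\nabla_y g,\y_{t+1}-\y_t\rangle\geq g(\x_t,\y_{t+1})-g(\x_t,\y_t)-\tfrac{L_g}{2}\|\y_t-\y_{t+1}\|^2$). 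After summing, the function-value pieces collapse to the non-negative quantity $g(\x_t,\y_{t+1})-g(\x_t,\y^*(\x_t))$, producing a contraction factor $(1-\tau_t\tau\lambda/2)$ on $\|\y_t-\y^*(\x_t)\|^2$ and a controlled positive slack $\tau_t\tau L_g\|\y_t-\y_{t+1}\|^2$. For the noise inner product I would apply Young's inequality, tuning the parameter so that any $\|\y_{t+1}-\y^*(\x_t)\|^2$ produced on the right-hand side cancels with the corresponding piece from strong convexity and the residual error term matches $\tfrac{2\tau_t\tau}{\lambda}\|e_t\|^2$. Multiplying the resulting contraction by $(1+\tau_t\tau\lambda/4)$ uses $(1+\tau_t\tau\lambda/4)(1-\tau_t\tau\lambda/2)\leq 1-\tau_t\tau\lambda/4$ to give the target first term, and loosens the error constant to $\tfrac{8\tau_t\tau}{\lambda}$.

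The main technical obstacle is packaging the several $\|\y_t-\y_{t+1}\|^2$ contributions---the $-\|\y_t-\y_{t+1}\|^2$ from the three-point identity, the $+\tau_t\tau L_g\|\y_t-\y_{t+1}\|^2$ from smoothness, and an additional slack from the Young's split on the $e_t$ cross term---so that after multiplication by $(1+\tau_t\tau\lambda/4)$ they collapse into exactly $-\tfrac{2\tau}{\tau_t}(1+\tfrac{\tau_t\tau\lambda}{4})(\tfrac{1}{2\tau}-\tfrac{3L_g}{4})\|\y_t-\y_{t+1}\|^2$. The seemingly anomalous $1/\tau_t$ prefactor is natural once one notes $\|\y_t-\y_{t+1}\|^2=\tau_t^2\tau^2\|\w_{t+1}\|^2$, so the negative contribution is really a $\tau_t\tau^2(1-3L_g\tau/2)\|\w_{t+1}\|^2$ descent term; writing it as a multiple of $\|\y_t-\y_{t+1}\|^2$ is the form needed to cancel against the positive $\|\y_t-\y_{t+1}\|^2$ contributions that arise downstream when bounding the four Hessian/gradient estimator errors in Lemma~\ref{lem:2}. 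The step-size restriction $\tau\leq 2/(3L_g)$ is precisely what keeps $(\tfrac{1}{2\tau}-\tfrac{3L_g}{4})\geq 0$, ensuring this term is genuinely non-positive and hence usable as a descent quantity in the full proofs of Theorems~\ref{thm:1} and~\ref{thm:2}.
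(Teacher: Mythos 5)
Your outer decomposition (Young's inequality with parameter $\tau_t\tau\lambda/4$ plus Lipschitzness of $\y^*(\cdot)$ and $\|\x_{t+1}-\x_t\|=\gamma\eta_t\|\z_{t+1}\|$) is exactly the paper's, and that part is fine. The gap is in the optimization piece. Your only source of negativity in $\|\y_t-\y_{t+1}\|^2$ is the $-\|\y_t-\y_{t+1}\|^2$ from the three-point identity at the damped step; smoothness and the Young split on the noise only add back positive slack. So the best coefficient your route can produce is about $(1+\tfrac{\tau_t\tau\lambda}{4})\bigl(1-\tau_t\tau L_g-\text{noise slack}\bigr)$, whereas the lemma requires $\tfrac{2\tau}{\tau_t}(1+\tfrac{\tau_t\tau\lambda}{4})(\tfrac{1}{2\tau}-\tfrac{3L_g}{4})=\tfrac{1}{\tau_t}(1-\tfrac{3L_g\tau}{2})(1+\tfrac{\tau_t\tau\lambda}{4})$. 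Your own accounting shows the mismatch: you correctly note the target is $\approx\tau_t\tau^2(1-\tfrac{3L_g\tau}{2})\|\w_{t+1}\|^2$, but your pieces sum to only $\approx\tau_t^2\tau^2\|\w_{t+1}\|^2$ — a factor $\tau_t$ short, and $\tau_t=\eta_t\to 0$, so this is not a constant-factor loss but a strictly weaker inequality. The weaker bound is also not usable downstream: in the proofs of Theorems~\ref{thm:1} and~\ref{thm:2} the negative term must dominate $\tfrac{2(L_{gy}^2+\cdots)}{\eta_t}\|\y_{t+1}-\y_t\|^2$ with $\tau_t=\eta_t$, which is precisely why the $1/\tau_t$ amplification is in the statement.

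The paper gets that amplification by a device you did not use: it introduces the auxiliary \emph{full} step $\widetilde\y_{t+1}=\y_t-\tau\w_{t+1}$ and writes $\y_{t+1}=(1-\tau_t)\y_t+\tau_t\widetilde\y_{t+1}$. Expanding $\|\y_{t+1}-\y^*(\x_t)\|^2$ then incurs a quadratic penalty of only $\tau_t^2\|\y_t-\widetilde\y_{t+1}\|^2$, while the cross term $2\tau_t(\widetilde\y_{t+1}-\y_t)^{\top}(\y_t-\y^*(\x_t))$ is bounded by running the strong-convexity/smoothness/optimality argument at $\widetilde\y_{t+1}$, where the identity $\w_{t+1}=\tfrac{1}{\tau}(\y_t-\widetilde\y_{t+1})$ (the projection optimality condition in the constrained version) contributes an extra $+\tfrac{1}{\tau}\|\y_t-\widetilde\y_{t+1}\|^2$. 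The net descent is therefore of order $\tau_t\|\y_t-\widetilde\y_{t+1}\|^2=\tfrac{1}{\tau_t}\|\y_t-\y_{t+1}\|^2$, which is what yields the $\tfrac{2\tau}{\tau_t}$ factor after multiplying by $(1+\tfrac{\tau_t\tau\lambda}{4})$. A secondary, smaller imprecision: you claim the $\|\y_{t+1}-\y^*(\x_t)\|^2$ term produced by Young's inequality on the noise cancels against ``the corresponding piece from strong convexity,'' but your strong-convexity step produces contraction in $\|\y_t-\y^*(\x_t)\|^2$, not $\|\y_{t+1}-\y^*(\x_t)\|^2$; you would have to either absorb it into the left-hand side and divide, or split the noise cross term against $\y_t-\y^*(\x_t)$ and $\y_{t+1}-\y_t$ separately. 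That is repairable, but the missing full-step argument is not a detail — without it the stated bound cannot be reached by your route.
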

Based on this lemma, we derive the following lemma. 
\begin{lemma}\label{lem:6}
Let $\delta_{y,t}=\| \y_{t+1} - \y^*(\x_t)\|^2$, we have
\begin{align*}
&\bigg[\sum_{t=0}^T\E(\delta_{y, t+1} - \delta_{y,t})\bigg]\leq \sum_{t=0}^T\frac{8\tau_t\tau}{\lambda}\E\|\nabla_y g(\x_t, \y_t) -\w_{t+1}\|^2  + \sum_{t=0}^T(- \frac{\tau_t\tau\lambda}{4}) \E[\delta_{y,t}]\\
& + \sum_{t=0}^T\frac{8L_y^2\gamma^2\eta_t^2}{\tau_t\tau\lambda}\E\|\z_{t+1}\|^2  - \sum_{t=0}^T  \frac{2\tau}{\tau_t}(1+ \frac{\tau_t\tau \lambda}{4})(\frac{1}{2\tau} - \frac{3L_g}{4} )\E\|\y_t- \y_{t+1}\|^2. 
\end{align*}
\end{lemma}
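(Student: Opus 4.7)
The plan is to obtain Lemma~\ref{lem:6} as a direct consequence of Lemma~\ref{lem:3} by taking expectation and summing over $t$. First I would start from the per-iteration bound in Lemma~\ref{lem:3} (with $\delta_{y,t}$ interpreted as $\|\y_t - \y^*(\x_t)\|^2$ so that $\delta_{y,t+1}$ matches the LHS of Lemma~\ref{lem:3}; the statement as written appears to have a minor indexing typo, and this interpretation is what is compatible with the telescoping structure). Subtracting $\delta_{y,t}$ from both sides of Lemma~\ref{lem:3} converts $(1-\tfrac{\tau_t\tau\lambda}{4})\delta_{y,t}$ into $-\tfrac{\tau_t\tau\lambda}{4}\delta_{y,t}$ and exposes the one-step difference $\delta_{y,t+1} - \delta_{y,t}$ on the LHS.

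Next I would take expectation over all randomness in $\xi_{1:t+1},\zeta_{1:t+1}$ on both sides. Since Lemma~\ref{lem:3} is a pointwise (pathwise) inequality, monotonicity and linearity of $\E[\cdot]$ yield the same inequality with each of the four RHS quantities replaced by their expectations: the variance term $\E\|\nabla_y g(\x_t,\y_t) - \w_{t+1}\|^2$, the contraction term $-\tfrac{\tau_t\tau\lambda}{4}\E[\delta_{y,t}]$, the step-size term $\E\|\z_{t+1}\|^2$, and the negative $\E\|\y_t - \y_{t+1}\|^2$ term used later to cancel smoothness contributions in the master analysis. Finally, summing the resulting inequality over $t = 0, 1, \ldots, T$ collapses the LHS telescopically into $\sum_{t=0}^{T} \E(\delta_{y,t+1} - \delta_{y,t})$ while the RHS becomes exactly the sum of the four quantities in the claim, with matching coefficients $\tfrac{8\tau_t\tau}{\lambda}$, $-\tfrac{\tau_t\tau\lambda}{4}$, $\tfrac{8L_y^2\gamma^2\eta_t^2}{\tau_t\tau\lambda}$, and $-\tfrac{2\tau}{\tau_t}(1+\tfrac{\tau_t\tau\lambda}{4})(\tfrac{1}{2\tau} - \tfrac{3L_g}{4})$.

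The main obstacle here is essentially absent: once Lemma~\ref{lem:3} is in hand, the proof of Lemma~\ref{lem:6} is pure bookkeeping. The only things to be careful about are (i) the indexing convention for $\delta_{y,t}$ noted above, and (ii) making sure the measurability is set up so that taking $\E$ before summing is valid — this is automatic because $\y_t, \x_t, \w_{t+1}, \z_{t+1}$ are all measurable with respect to the filtration generated by the first $t+1$ sampled oracles. Substantively, the role of Lemma~\ref{lem:6} is to package the descent on $\|\y_t - \y^*(\x_t)\|^2$ into a summed form that, when combined with the potential function $F(\x_t) + \text{(estimator errors)} + C \delta_{y,t}$ built from Lemmas~\ref{lem:1}, \ref{lem:2}, and the STORM-based error recursions, lets the $-\|\y_t - \y_{t+1}\|^2$ term cancel residual second-order terms and the $-\tfrac{\tau_t\tau\lambda}{4}\E[\delta_{y,t}]$ term absorb the $\|\y_{t} - \y^*(\x_t)\|^2$ contributions from Lemma~\ref{lem:2}.
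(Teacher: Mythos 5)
Your proposal is correct and matches the paper's (implicit) argument: the paper derives Lemma~\ref{lem:6} directly from Lemma~\ref{lem:3} by subtracting $\|\y_t-\y^*(\x_t)\|^2$, taking expectations, and summing over $t=0,\ldots,T$, using $\x_{t+1}-\x_t=-\eta_t\gamma\z_{t+1}$ to turn the $\|\x_t-\x_{t+1}\|^2$ term into $\gamma^2\eta_t^2\|\z_{t+1}\|^2$. Your reading of $\delta_{y,t}$ as $\|\y_t-\y^*(\x_t)\|^2$ (an indexing typo in the lemma statement) is the intended one and is consistent with how the quantity is used in the proof of Theorem~\ref{thm:1}.
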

For the last four terms in the RHS of the inequality in Lemma~\ref{lem:2}, we can use the following lemma about the variance reduced property of the STROM update. 
\begin{lemma}\label{lem:4}
Suppose $h: \R^d \rightarrow \R^{d'}$ is a $L$-Lipschitz continuous mapping, $\E_{\xi}[h(\e; \xi)] = h(\e)$, and $\E[\|h(\e; \xi) - h(\e)\|^2]\leq \sigma^2$. For any sequence $\{\e_0, \e_1, \ldots, \e_T\}$ where $\e_t\in\R^d$,  let $\hat h_{t+1} = (1-\beta_t)(\hat h_t  - h(\e_{t-1}; \xi_t)) + h(\e_t; \xi_t), t\geq 0$, for an Euclidean norm $\|\cdot\|$,
\begin{align*}
\E_t[\|\hat h_{t+1} - h(\e_t)\|^2]\leq (1-\beta_t) \|\hat h_t - h(\e_{t-1})\|^2 + 2\beta_t^2 \sigma^2 +2 L^2\|\e_{t} - \e_{t-1}\|^2.
\end{align*}
If $h(\e)\in\Omega$ for a convex set $\Omega$, let $\hat h_{t+1} = \prod_{\Omega}[(1-\beta_t)(\hat h_t  - h(\e_{t-1}; \xi_t)) + h(\e_t; \xi_t)], t\geq 0$
\begin{align*}
\E_t[\|\hat h_{t+1} - h(\e_t)\|^2]\leq (1-\beta_t) \|\hat h_t - h(\e_{t-1})\|^2 + 2\beta_t^2 \sigma^2 + 2L^2\|\e_{t} - \e_{t-1}\|^2.
\end{align*}
\end{lemma}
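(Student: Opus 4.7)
The plan is to decompose $\hat h_{t+1} - h(\e_t)$ into a piece that is deterministic given the history before $\xi_t$ and a conditionally zero-mean noise piece, so that after squaring and taking $\E_t[\cdot]$ the cross term vanishes. Starting from the update rule, I would add and subtract $(1-\beta_t)h(\e_{t-1})$ to obtain
$$\hat h_{t+1} - h(\e_t) \;=\; (1-\beta_t)\bigl(\hat h_t - h(\e_{t-1})\bigr) \;+\; E_t,$$
where
$$E_t \;=\; \bigl(h(\e_t;\xi_t) - h(\e_t)\bigr) \;-\; (1-\beta_t)\bigl(h(\e_{t-1};\xi_t) - h(\e_{t-1})\bigr).$$
Since $\hat h_t, \e_{t-1}, \e_t$ depend only on randomness prior to $\xi_t$, the first summand is constant under $\E_t[\cdot]$, and unbiasedness of $h(\cdot;\xi_t)$ gives $\E_t[E_t] = 0$. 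Consequently
$$\E_t\|\hat h_{t+1} - h(\e_t)\|^2 \;=\; (1-\beta_t)^2\,\|\hat h_t - h(\e_{t-1})\|^2 \;+\; \E_t\|E_t\|^2.$$

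To bound $\E_t\|E_t\|^2$, split $E_t$ once more as
$$E_t \;=\; \beta_t\bigl(h(\e_t;\xi_t) - h(\e_t)\bigr) \;+\; (1-\beta_t)\Bigl[\bigl(h(\e_t;\xi_t) - h(\e_{t-1};\xi_t)\bigr) - \bigl(h(\e_t) - h(\e_{t-1})\bigr)\Bigr],$$
and apply $\|a+b\|^2 \leq 2\|a\|^2 + 2\|b\|^2$. The first piece contributes at most $2\beta_t^2\sigma^2$ by the variance bound. For the second piece I would invoke the elementary fact $\E\|X - \E X\|^2 \leq \E\|X\|^2$ with $X = h(\e_t;\xi_t) - h(\e_{t-1};\xi_t)$, and then apply the $L$-Lipschitz property of the per-sample mapping $h(\cdot;\xi_t)$ to obtain $\E_t\|X\|^2 \leq L^2\|\e_t - \e_{t-1}\|^2$. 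Combining, and loosening $(1-\beta_t)^2 \leq 1-\beta_t$ on the deterministic term while using $(1-\beta_t)^2 \leq 1$ on the Lipschitz term, yields exactly the claimed inequality.

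For the projected variant, observe that $h(\e_t) \in \Omega$ by hypothesis and that projection onto the convex set $\Omega$ is nonexpansive toward any point of $\Omega$. Therefore
$$\|\hat h_{t+1} - h(\e_t)\|^2 \;\leq\; \bigl\|(1-\beta_t)(\hat h_t - h(\e_{t-1};\xi_t)) + h(\e_t;\xi_t) - h(\e_t)\bigr\|^2,$$
after which the unprojected analysis carries through verbatim. The main (and essentially only) non-mechanical step is the variance-to-second-moment trick $\E\|X - \E X\|^2 \leq \E\|X\|^2$ combined with the per-sample Lipschitz property of $h(\cdot;\xi)$; without Lipschitzness at the stochastic-oracle level one cannot convert the squared finite-difference into $L^2\|\e_t - \e_{t-1}\|^2$. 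Everything else is direct bookkeeping.
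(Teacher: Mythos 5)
Your proof is correct and is precisely the standard STORM variance-recursion argument that the paper defers to via its citation of Cutkosky and Orabona: the martingale decomposition killing the cross term, the $2\beta_t^2\sigma^2$ variance contribution, the variance-to-second-moment bound combined with per-sample Lipschitzness for the finite-difference term, and nonexpansiveness of the projection for the constrained variant. Your closing observation that the Lipschitz condition must hold at the stochastic-oracle level (as in Assumption 2(i)) is exactly the right point to flag.
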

{\bf Remark:} The above lemma can be easily proved by following the analysis in~\citep{cutkosky2019momentum}. 
Based on this lemma we have the following lemma. 
\begin{lemma}\label{lem:5}
Let $\delta_{t}=\|\hat h_{t+1} - h(\e_t)\|^2$, we have
\begin{align*}
\sum_{t=0}^T\E(\frac{\delta_{t+1}}{\eta_t} - \frac{\delta_{t}}{\eta_{t-1}})\leq \sum_{t=0}^T\frac{2\beta_{t+1}^2\sigma^2}{\eta_t}  + \sum_{t=0}^T(\frac{1}{\eta_t} - \frac{1}{\eta_{t-1}} - \frac{\beta_{t+1}}{\eta_t}) \E[\delta_t] +  \sum_{t=0}^T\frac{2L^2}{\eta_t}\E\|\e_{t+1} - \e_t\|^2.
\end{align*}
\end{lemma}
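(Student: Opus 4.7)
The plan is to derive Lemma~\ref{lem:5} as a direct telescoping corollary of Lemma~\ref{lem:4}. The key observation is that with the shorthand $\delta_t = \|\hat h_{t+1} - h(\e_t)\|^2$, the bound in Lemma~\ref{lem:4} becomes a one-step recursion on $\delta_t$ after an index shift. Specifically, I will apply Lemma~\ref{lem:4} at iteration $t+1$ (so that $\hat h_{t+2}$ is produced from $\hat h_{t+1}$ via the STORM rule with coefficient $\beta_{t+1}$), which yields
\[
\E_{t+1}[\delta_{t+1}] \le (1-\beta_{t+1})\,\delta_t + 2\beta_{t+1}^2\sigma^2 + 2L^2\|\e_{t+1}-\e_t\|^2,
\]
where $\E_{t+1}[\cdot]$ is the conditional expectation given the history through iteration $t$, under which $\delta_t$, $\e_t$ and $\e_{t+1}$ are measurable.

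Next I will divide both sides of this recursion by the deterministic step size $\eta_t>0$, subtract $\delta_t/\eta_{t-1}$, and rewrite the coefficient of $\delta_t$ using the identity $\frac{1-\beta_{t+1}}{\eta_t}-\frac{1}{\eta_{t-1}} = \frac{1}{\eta_t}-\frac{1}{\eta_{t-1}}-\frac{\beta_{t+1}}{\eta_t}$. This produces the per-step inequality
\[
\frac{\E_{t+1}[\delta_{t+1}]}{\eta_t} - \frac{\delta_t}{\eta_{t-1}} \le \Bigl(\frac{1}{\eta_t}-\frac{1}{\eta_{t-1}}-\frac{\beta_{t+1}}{\eta_t}\Bigr)\delta_t + \frac{2\beta_{t+1}^2\sigma^2}{\eta_t} + \frac{2L^2}{\eta_t}\|\e_{t+1}-\e_t\|^2.
\]

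Finally, I will take total expectation (the tower property eliminates the remaining conditional expectation on the left-hand side) and sum over $t=0,\ldots,T$; the left-hand side collapses to $\sum_{t=0}^T \E\bigl(\delta_{t+1}/\eta_t - \delta_t/\eta_{t-1}\bigr)$, matching exactly the statement of Lemma~\ref{lem:5}.

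There is essentially no hard step here: the proof is a one-line algebraic rewriting of Lemma~\ref{lem:4}, engineered so that multiplying by the time-varying weight $1/\eta_t$ lines the $\delta_t$ terms up for telescoping. The only item worth flagging is the boundary convention at $t=0$, where the term $\delta_{-1}/\eta_{-1}$ must be interpreted using the initializations $\hat h_0$ and $\e_{-1}$ provided by the algorithm (or equivalently, by declaring $\eta_{-1}$ via a natural extension such as $\eta_{-1}=\eta_0$); since the lemma is applied downstream only as a bound on the prefix sum that appears in the main convergence analysis, this boundary term is inconsequential.
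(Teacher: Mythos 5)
Your proposal is correct and is exactly the derivation the paper intends (the paper omits an explicit proof of Lemma~\ref{lem:5}, introducing it with ``Based on this lemma we have the following lemma''): apply Lemma~\ref{lem:4} at iteration $t+1$ to get $\E_{t+1}[\delta_{t+1}]\le(1-\beta_{t+1})\delta_t+2\beta_{t+1}^2\sigma^2+2L^2\|\e_{t+1}-\e_t\|^2$, divide by $\eta_t$, subtract $\delta_t/\eta_{t-1}$, take total expectation, and sum. Your handling of the index shift, the coefficient rewriting, and the $t=0$ boundary convention are all consistent with how the lemma is used downstream.
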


\begin{proof}[Proof of Theorem~\ref{thm:1}]
First, we apply Lemma~\ref{lem:5} to $\delta_{gy, t}= \|\w_{t+1} - \nabla_y g(\x_t, \y_t)\|^2$. We have
\begin{align*}
&\E\bigg[\sum_{t=1}^T\frac{\delta_{gy, t+1}}{\eta_t} - \frac{\delta_{gy, t}}{\eta_{t-1}}\bigg]\leq \sum_{t=1}^T\frac{2\beta_{gy, t+1}^2\sigma^2}{\eta_t}  + \sum_{t=1}^T(\frac{1}{\eta_t} - \frac{1}{\eta_{t-1}} - \frac{\beta_{gy, t+1}}{\eta_t}) \E[\delta_{gy, t}] \\
&+  \sum_{t=1}^T\frac{2L_{gy}^2}{\eta_t}\E(\|\x_{t+1} - \x_t\|^2 + \|\y_{t+1} - \y_t\|^2).
\end{align*}
Set $\eta_t = \tau_t = c/(c_0 + t)^{1/3}$.
To ensure $\eta_t \leq \frac{1}{4L_F}$, we need $c_0\geq (4 L_F c)^3$.
Thus,
\begin{equation}
\begin{split}
\frac{1}{\eta_t} - \frac{1}{\eta_{t-1}} &= \frac{(c_0+t)^{1/3}}{c} - \frac{(c_0+t-1)^{1/3}}{c} \\
&\leq \frac{1}{3c(c_0+t-1)^{2/3}} \leq  \frac{1}{3c(c_0/2+t)^{2/3}}  \\
&\leq \frac{2^{2/3}}{3c(c_0+t)^{2/3}} \leq \frac{2^{2/3}}{3c^3} \eta_t^2 \leq \frac{1}{7L_F c^3} \eta_t, \\
\end{split}
\end{equation}
where the first inequality holds by the concavity of the function $f(x) = x^{1/3}$, i.e., $(x+y)^{1/3} \leq x^{1/3} + \frac{y}{3x^{2/3}}$, the second inequality is because $c_0 \geq 2$.
With $\beta_{gy, t+1} = (\frac{1}{7L_Fc^3} + \frac{8C\gamma\tau_t\tau}{\lambda\eta_t} )\eta_t^2$ ($\beta_{gy, t+1}< 1$ for large enough constant $c_0\geq \max\{\left(\frac{2}{7L_Fc}\right)^{3/2}, \left(\frac{c^2\lambda}{16C\gamma\tau}\right)^{3/2}\}$. By combining the recursion of $\delta_{y, t+1}$ with $\delta_{gy, t+1}$, we have 
\begin{align*}
&\E\bigg[\sum_{t=0}^TC\gamma(\delta_{y,t+1} - \delta_{y,t})\bigg] + \E\bigg[\sum_{t=0}^T(\frac{\delta_{gy, t+1}}{\eta_t} - \frac{\delta_{gy, t}}{\eta_{t-1}})\bigg]\\
&\leq \sum_{t=0}^T\frac{2\beta_{gy, t+1}^2\sigma^2}{\eta_t} +  \sum_{t=0}^T(-  \frac{C\gamma\tau_t\tau\lambda}{4}) \E[\delta_{y,t}]\\
& + \sum_{t=0}^T\frac{8C\gamma^3L_y^2\eta_t^2}{\tau_t\tau\lambda}\E\|\z_{t+1}\|^2  - \sum_{t=0}^T \frac{2C\gamma\tau}{\tau_t}(1+ \frac{\tau_t\tau \lambda}{4})(\frac{1}{2\tau} - \frac{3L_g}{4} )\E\|\y_t- \y_{t+1}\|^2\\ 
&+  \sum_{t=0}^T\frac{2L_{gy}^2}{\eta_t}\E(\|\x_{t+1} - \x_t\|^2 + \|\y_{t+1} - \y_t\|^2),
\end{align*}
where $C$ is given below. 

To continue, we add the above recursion with the recursions for $\delta_{fx,t}=\|\u_{t+1} - \nabla_x f(\x_t, \y_t)\|^2$, $\delta_{fy,t}=\|\v_{t+1} - \nabla_y f(\x_t; \y_t)\|^2$, $\delta_{gxy, t}= \|V_{t+1} - \nabla_{xy}^2 g(\x_t, \y_t)\|_F^2$, $\delta_{gyy,t} = \|H_{t+1} - \nabla_{yy}^2 g(\x_t, \y_t)\|^2$, we have
\begin{align*}
&\E\bigg[\sum_{t=0}^TC\gamma(\delta_{y,t+1} - \delta_{y,t})\bigg] + \E\bigg[\sum_{t=0}^T(\frac{\delta_{gy, t+1}}{\eta_t} - \frac{\delta_{gy, t}}{\eta_{t-1}})\bigg] + \E\bigg[\sum_{t=0}^T(\frac{\delta_{fx, t+1}}{\eta_t} - \frac{\delta_{fx, t}}{\eta_{t-1}})\bigg]\\
& +  \E\bigg[\sum_{t=0}^T(\frac{\delta_{fy, t+1}}{\eta_t} - \frac{\delta_{fy, t}}{\eta_{t-1}})\bigg]+ \E\bigg[\sum_{t=0}^T(\frac{\delta_{gxy, t+1}}{\eta_t} - \frac{\delta_{gxy, t}}{\eta_{t-1}})\bigg]  + \E\bigg[\sum_{t=0}^T(\frac{\delta_{gyy, t+1}}{\eta_t} - \frac{\delta_{gyy, t}}{\eta_{t-1}})\bigg]\\
&\leq \sum_{t=0}^T\frac{2(\beta_{gy, t+1}^2+ \beta_{fx,t+1}^2 + \beta_{fy,t+1}^2 + \beta_{gxy, t+1}^2 + \beta_{gyy,t+1}^2)\sigma^2}{\eta_t} +  \sum_{t=0}^T(- \frac{C\gamma\tau_t\tau\lambda}{4}) \E[\delta_{y,t}]\\
& ~~~+ \sum_{t=0}^T(\frac{1}{\eta_t} - \frac{1}{\eta_{t-1}} - \frac{\beta_{fx,t+1}}{\eta_t}) \E[\delta_{fx,t}] +  \sum_{t=0}^T(\frac{1}{\eta_t} - \frac{1}{\eta_{t-1}} - \frac{\beta_{fy, t+1}}{\eta_t}) \E[\delta_{fy,t}]  \\
&~~~+ \sum_{t=0}^T(\frac{1}{\eta_t} - \frac{1}{\eta_{t-1}} - \frac{\beta_{gxy,t+1}}{\eta_t}) \E[\delta_{gxy,t}] +  \sum_{t=0}^T(\frac{1}{\eta_t} - \frac{1}{\eta_{t-1}} - \frac{\beta_{gyy, t+1}}{\eta_t}) \E[\delta_{gyy,t}]  \\
&~~~ + \sum_{t=0}^T\frac{8C\gamma^3\eta_t^2L_y^2}{\tau_t\tau\lambda}\E\|\z_{t+1}\|^2  - \sum_{t=1}^T \frac{2C\gamma\tau}{\tau_t}(1+ \frac{\tau_t\tau \lambda}{4})(\frac{1}{2\tau} - \frac{3L_g}{4} )\E\|\y_t- \y_{t+1}\|^2\\
&~~~ +  \sum_{t=0}^T\frac{2(L_{gy}^2+L_{fx}^2 + L_{fy}^2 + L_{gxy}^2 + L_{gyy}^2)}{\eta_t}\E(\|\x_{t+1} - \x_t\|^2 + \|\y_{t+1} - \y_t\|^2).
\end{align*}
With large enough constant $c_0\geq \max\{\left(\frac{2}{7L_Fc}\right)^{3/2},(2C_1\gamma c^2)^{3/2},(2C_2\gamma c^2)^{3/2},(2C_3\gamma c^2)^{3/2},(2C_4\gamma c^2)^{3/2}\}$, by setting $\beta_{fx, t+1} = (\frac{1}{7L_Fc^3}  + C_1\gamma)\eta_t^2<1$, $\beta_{fy, t+1} = (\frac{1}{7L_Fc^3}  + C_4\gamma)\eta_t^2<1$,  $\beta_{gxy, t+1} = (\frac{1}{7L_Fc^3}  + C_2\gamma)\eta_t^2<1$, $\beta_{gyy, t+1} = (\frac{1}{7L_Fc^3}  + C_3\gamma)\eta_t^2<1$, $ \frac{C\tau_t\tau\lambda}{4} \geq C_0\eta_t$ (which means the constant $C\geq \frac{4C_0}{\tau\lambda}$), we have 
\begin{align*}
&\E\bigg[\sum_{t=0}^TC\gamma(\delta_{y,t+1} - \delta_{y,t})\bigg] + \E\bigg[\sum_{t=0}^T(\frac{\delta_{gy, t+1}}{\eta_t} - \frac{\delta_{gy, t}}{\eta_{t-1}})\bigg] + 
\E\left[\sum\limits_{t=0}^{T} (\frac{\delta_{fx,t+1}}{\eta_t} - \frac{\delta_{fx,t}}{\eta_{t-1}}) \right] \\ 
&+ \E\bigg[\sum_{t=0}^T(\frac{\delta_{fy, t+1}}{\eta_t} - \frac{\delta_{fy, t}}{\eta_{t-1}})\bigg]
+ \E\bigg[\sum_{t=0}^T(\frac{\delta_{gxy, t+1}}{\eta_t} - \frac{\delta_{gxy, t}}{\eta_{t-1}})\bigg]  + \E\bigg[\sum_{t=0}^T(\frac{\delta_{gyy, t+1}}{\eta_t} - \frac{\delta_{gyy, t}}{\eta_{t-1}})\bigg]\\
&\leq \sum_{t=0}^T\frac{2(\beta_{gy, t+1}^2+ \beta_{fx,t+1}^2 + \beta_{fy,t+1}^2 + \beta_{gxy, t+1}^2 + \beta_{gyy,t+1}^2)\sigma^2}{\eta_t}\\
& -  \sum_{t=0}^TC_0\gamma \eta_t\E[\delta_{y,t}]-  \sum_{t=0}^TC_1\gamma\eta_t\E[\delta_{fx,t}] -  \sum_{t=0}^TC_4\gamma\eta_t \E[\delta_{fy,t}]-  \sum_{t=0}^TC_2\gamma\eta_t \E[\delta_{gxy,t}] -   \sum_{t=0}^TC_3\gamma\eta_t \E[\delta_{gyy,t}]  \\
& + \sum_{t=0}^T(\frac{8C\gamma L_y^2}{\tau_t\tau\lambda} + \frac{2(L_{gy}^2+L_{fx}^2 + L_{fy}^2 + L_{gxy}^2 + L_{gyy}^2)}{\eta_t})\eta_t^2\gamma^2\E[\|\z_{t+1}\|^2] \\
& - \sum_{t=0}^T \frac{2C\gamma\tau}{\tau_t}(1+ \frac{\tau_t\tau \lambda}{4})(\frac{1}{2\tau} - \frac{3L_g}{4} )\E[\|\y_t- \y_{t+1}\|^2]\\
&+  \sum_{t=0}^T\frac{2(L_{gy}^2+L_{fx}^2 + L_{fy}^2 + L_{gxy}^2 + L_{gyy}^2)}{\eta_t}(\E[\|\y_{t+1} - \y_t\|^2]).
\end{align*}
With $\tau\leq 1/{(3L_g)}$ and $C \geq  \frac{4(L^2_{gy} + L^2_{fx} + L^2_{fy} + L^2_{gxy} + L^2_{gyy})}{\gamma}$, we have $\frac{1}{2\tau} - \frac{3L_g}{4}\geq \frac{1}{4\tau}$, $\frac{C\gamma}{2\tau_t}\geq \frac{2(L_{gy}^2+L_{fx}^2 +  L_{fy}^2 + L_{gxy}^2 + L_{gyy}^2)}{\eta_t}$. As a result, 
\begin{align*}
&\E\bigg[\sum_{t=0}^TC\gamma(\delta_{y,t+1} - \delta_{y,t})\bigg] + \E\bigg[\sum_{t=0}^T(\frac{\delta_{gy, t+1}}{\eta_t} - \frac{\delta_{gy, t}}{\eta_{t-1}})\bigg] + 
\E\left[\sum\limits_{t=0}^{T} (\frac{\delta_{fx,t+1}}{\eta_t} - \frac{\delta_{fx,t}}{\eta_{t-1}}) \right] \\ 
&+ \E\bigg[\sum_{t=0}^T(\frac{\delta_{fy, t+1}}{\eta_t} - \frac{\delta_{fy, t}}{\eta_{t-1}})\bigg]
+ \E\bigg[\sum_{t=0}^T(\frac{\delta_{gxy, t+1}}{\eta_t} - \frac{\delta_{gxy, t}}{\eta_{t-1}})\bigg]  +  \E\bigg[\sum_{t=0}^T(\frac{\delta_{gyy, t+1}}{\eta_t} - \frac{\delta_{gyy, t}}{\eta_{t-1}})\bigg]\\
&\leq \sum_{t=0}^T\frac{2(\beta_{gy, t+1}^2+ \beta_{fx,t+1}^2 + \beta_{fy,t+1}^2 + \beta_{gxy, t+1}^2 + \beta_{gyy,t+1}^2)\sigma^2}{\eta_t}\\
& - \sum_{t=0}^TC_0 \gamma\eta_t\E[\delta_{y,t}]-  \sum_{t=0}^TC_1\gamma\eta_t\E[\delta_{fx,t}] -  \sum_{t=0}^TC_4\gamma\eta_t \E[\delta_{fy,t}]-  \sum_{t=0}^TC_2\gamma\eta_t \E[\delta_{gxy,t}] -   \sum_{t=0}^TC_3\gamma\eta_t \E[\delta_{gyy,t}]  \\
& + \sum_{t=0}^T(\frac{8C \gamma L_y^2}{\tau_t \tau \lambda} + \frac{2(L_{gy}^2+L_{fx}^2 + L_{fy}^2 + L_{gxy}^2 + L_{gyy}^2)}{\eta_t})\gamma^2\eta_t^2\E[\|\z_{t+1}\|^2].   
\end{align*} 
Adding the above inequality with that in Lemma~\ref{lem:1}, we have
\begin{align*}
&\E\bigg[\sum_{t=0}^T \frac{\gamma\eta_t}{2}\|\nabla F(\x_t)\|^2 \bigg]+ \E\bigg[\sum_{t=0}^TC\gamma (\delta_{y,t+1} - \delta_{y,t})\bigg] + \E\bigg[\sum_{t=0}^T(\frac{\delta_{gy, t+1}}{\eta_t} - \frac{\delta_{gy, t}}{\eta_{t-1}})\bigg] + \E\bigg[\sum_{t=0}^T(\frac{\delta_{fy, t+1}}{\eta_t} - \frac{\delta_{fy, t}}{\eta_{t-1}})\bigg]\\
&+\E\bigg[\sum_{t=0}^T(\frac{\delta_{fx, t+1}}{\eta_t} - \frac{\delta_{fx, t}}{\eta_{t-1}})\bigg]+ \E\bigg[\sum_{t=0}^T(\frac{\delta_{gxy, t+1}}{\eta_t} - \frac{\delta_{gxy, t}}{\eta_{t-1}})\bigg]  + \E\bigg[\sum_{t=0}^T(\frac{\delta_{gyy, t+1}}{\eta_t} - \frac{\delta_{gyy, t}}{\eta_{t-1}})\bigg] \\ 
&  \leq \E\bigg[F(\x_0)  - F(\x_*)  -\sum_{t=0}^T \frac{\gamma\eta_t}{4}\|\z_{t+1}\|^2\bigg]\\
&  +\E\bigg[ \sum_{t=0}^T(\frac{8 C \gamma L_y^2}{\tau \lambda} + 2(L_{gy}^2+L_{fx}^2 + L_{fy}^2 + L_{gxy}^2 + L_{gyy}^2))\gamma^2\eta_t\|\z_{t+1}\|^2 \bigg] +  O(1)\sum_{t=0}^T\eta_t^3. 
\end{align*}
With $\gamma (\frac{8 C \gamma L_y^2}{\tau \lambda} + 2(L_{gy}^2+L_{fx}^2 + L_{fy}^2 + L_{gxy}^2 + L_{gyy}^2))\leq 1/4$ ($\gamma \leq \min\{\frac{\sqrt{\tau \lambda}}{8\sqrt{C}L_y}, \frac{1}{16(L_{gy}^2+L^2_{fx} + L^2_{fy} + L^2_{gxy} +  L^2_{gyy})}\}$), we have 
\begin{align*}
&\E\bigg[\sum_{t=0}^T \frac{\gamma\eta_t}{2}\|\nabla F(\x_t)\|^2\bigg] + \E\bigg[\sum_{t=0}^TC\gamma(\delta_{y,t+1} - \delta_{y,t})\bigg] + \E\bigg[\sum_{t=0}^T(\frac{\delta_{gy, t+1}}{\eta_t} - \frac{\delta_{gy, t}}{\eta_{t-1}})\bigg] + \E\bigg[\sum_{t=0}^T(\frac{\delta_{fy, t+1}}{\eta_t} - \frac{\delta_{fy, t}}{\eta_{t-1}})\bigg]\\
&+ \E\bigg[\sum_{t=0}^T(\frac{\delta_{gxy, t+1}}{\eta_t} - \frac{\delta_{gxy, t}}{\eta_{t-1}})\bigg]  + \E\bigg[\sum_{t=0}^T(\frac{\delta_{gyy, t+1}}{\eta_t} - \frac{\delta_{gyy, t}}{\eta_{t-1}})\bigg]\\
&  \leq F(\x_0)  - F(\x_*)   + O(\log (T+2)).
\end{align*}
As a result, 
\begin{align*}
\E\bigg[\sum_{t=0}^T \frac{\gamma\eta_t}{2}\|\nabla F(\x_t)\|^2\bigg] \leq \Delta + C\gamma \E[\delta_{y, 0}] + \frac{\E[\delta_{gy,0} +\delta_{fx,0} +  \delta_{fy,0} + \delta_{gxy,0} + \delta_{gyy,0}]}{\eta_0}  +O(\log (T+2)). 
\end{align*}
Thus, 
\begin{align*}
\E\bigg[\sum_{t=0}^T \frac{\gamma}{2(T+1)}\|\nabla F(\x_t)\|^2\bigg] \leq \frac{\Delta}{\eta_T T} + \frac{C\gamma \E[\delta_{y, 0}]}{\eta_T T} + \frac{\E[\delta_{gy,0} +\delta_{fx,0} +  \delta_{fy,0} + \delta_{gxy,0} + \delta_{gyy,0}]}{\eta_0 \eta_T T}  +\frac{O(\log (T+2))}{\eta_T T}. 
\end{align*}
According to the analysis in~\citep{cutkosky2019momentum}, we have
\begin{align*}
\E\bigg[\frac{1}{T+1}\sum_{t=0}^T\|\nabla F(\x_t)\|^2\bigg]\leq \widetilde O(\frac{1}{T^{2/3}}). 
\end{align*}
\end{proof}

\section{Convergence Analysis of Theorem~\ref{thm:2}}\label{sec:thm2}
In this section, we present the convergence analysis of Theorem~\ref{thm:2}. In the following, we let $\y_t = (\y_{1,t}, \ldots, \y_{m,t})$, $\u_t = (\u_{1,t}, \ldots, \u_{m,t})$, $\v_t = (\v_{1,t}, \ldots, \v_{m,t})$, $V_t = (V_{1,t}, \ldots, V_{m,t})$, $H_t = (H_{1,t}, \ldots, H_{m,t})$. By defining  $g(\x, \y)=\sum_i g_i(\x, \y_i)$,  we have $g(\x, \y)$ is $\lambda$ strongly convex in terms of $\y$. Hence Lemma~\ref{lem:3}, Lemma~\ref{lem:6} still hold. The Lemma~\ref{lem:1} also holds. The lemma below will decompose  $\|\d_{t+1} - \nabla F(\x_t)\|^2$ into several terms that can be bounded separately. 
\begin{lemma}\label{lem:2-2}
For all $t\geq 0$, we have
\begin{align*}
&\|\d_{t+1} - \nabla F(\x_t)\|^2\leq 2\|\d_{t+1} - \frac{1}{m}\sum_{i=1}^m\z_{i,t+1}\|^2+\frac{1}{m}\sum_{i=1}^m4C_0\|\y_{i,t} - \y^*(\x_{i,t})\|^2 \\
&+  \frac{1}{m}\sum_{i=1}^m(4C_1\|\u_{i,t+1} -  \nabla_x f_i(\x_t, \y_{i,t}) \|^2 +4C_2\|V_{i,t+1} - \nabla^2_{xy}g_i(\x_t, \y_{i,t})\|^2) \\
& +\frac{1}{m}\sum_{i=1}^m (4C_3\|H_{i,t+1} - \nabla_{yy}^2g_i(\x_t, \y_{i,t})\|^2  +4C_4\|\v_{i,t+1} -\nabla_y f_i(\x_t, \y_{i,t}) \|^2),
\end{align*}
where $C_0=\max\limits_{1\leq i \leq m} (2L_{fx,i}^2 +  \frac{6C_{fy,i}^2L_{gxy,i}^2}{\lambda^2} + \frac{6C_{fy,i}^2C^2_{gxy,i}L^2_{gyy,i}}{\lambda^4}+ \frac{6L_{fy,i}^2C^2_{gxy,i}}{\lambda^2})$, $C_1 = 2$, $C_2 =\max\limits_{1\leq i \leq m} ( \frac{6C_{fy,i}^2}{\lambda^2})$, $C_3 = \max\limits_{1\leq i \leq m} (\frac{6C_{fy,i}^2C_{gxy,i}^2}{\lambda^4})$ and $C_4 = \max\limits_{1\leq i \leq m} (\frac{6C_{gxy,i}^2}{\lambda^2})$. 
\end{lemma}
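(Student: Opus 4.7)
The plan is to reduce the bound on $\|\d_{t+1} - \nabla F(\x_t)\|^2$ to a per-task application of Lemma~\ref{lem:2}. Introduce the shorthand $\bar{\z}_{t+1} = \frac{1}{m}\sum_{i=1}^m \z_{i,t+1}$, and apply the elementary inequality $\|a+b\|^2 \le 2\|a\|^2 + 2\|b\|^2$ to obtain
\begin{align*}
\|\d_{t+1} - \nabla F(\x_t)\|^2 \;\le\; 2\bigl\|\d_{t+1} - \bar{\z}_{t+1}\bigr\|^2 + 2\bigl\|\bar{\z}_{t+1} - \nabla F(\x_t)\bigr\|^2.
\end{align*}
The first piece matches the first term on the right-hand side of the lemma verbatim, so the remaining work is to control $\|\bar{\z}_{t+1}-\nabla F(\x_t)\|^2$.

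Next, since $\nabla F(\x_t) = \frac{1}{m}\sum_{i=1}^m \nabla F_i(\x_t)$, where $\nabla F_i(\x_t) = \nabla_x f_i(\x_t,\y_i^*(\x_t)) - \nabla_{xy}^2 g_i(\x_t,\y_i^*(\x_t)) [\nabla_{yy}^2 g_i(\x_t,\y_i^*(\x_t))]^{-1}\nabla_y f_i(\x_t,\y_i^*(\x_t))$ by Proposition~2, I would invoke Jensen's inequality (convexity of the squared Euclidean norm):
\begin{align*}
\bigl\|\bar{\z}_{t+1} - \nabla F(\x_t)\bigr\|^2
= \Bigl\|\tfrac{1}{m}\sum_{i=1}^m (\z_{i,t+1} - \nabla F_i(\x_t))\Bigr\|^2
\;\le\; \tfrac{1}{m}\sum_{i=1}^m \bigl\|\z_{i,t+1} - \nabla F_i(\x_t)\bigr\|^2.
\end{align*}
This is the key step that turns a single aggregate error into an average of per-task errors, each of which has exactly the structure handled in Lemma~\ref{lem:2}.

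Finally, for each fixed $i$ I would apply Lemma~\ref{lem:2} to the $i$-th task: since $\z_{i,t+1} = \u_{i,t+1} - V_{i,t+1}[H_{i,t+1}]^{-1}\v_{i,t+1}$ has the same functional form as $\z_{t+1}$ in Algorithm~\ref{alg:1}, and since Assumption~\ref{ass:4} supplies the Lipschitz and boundedness constants $L_{fx,i}, L_{fy,i}, L_{gxy,i}, L_{gyy,i}, C_{fy,i}, C_{gxy,i}$ needed to run that proof, Lemma~\ref{lem:2} yields
\begin{align*}
\|\z_{i,t+1} - \nabla F_i(\x_t)\|^2
&\le 2C_{0,i}\|\y_{i,t}-\y_i^*(\x_t)\|^2 + 2C_{1,i}\|\u_{i,t+1}-\nabla_x f_i(\x_t,\y_{i,t})\|^2 \\
&\quad + 2C_{2,i}\|V_{i,t+1}-\nabla_{xy}^2 g_i(\x_t,\y_{i,t})\|^2 + 2C_{3,i}\|H_{i,t+1}-\nabla_{yy}^2 g_i(\x_t,\y_{i,t})\|^2 \\
&\quad + 2C_{4,i}\|\v_{i,t+1}-\nabla_y f_i(\x_t,\y_{i,t})\|^2,
\end{align*}
where the constants $C_{0,i},\ldots,C_{4,i}$ are the per-task analogues of those in Lemma~\ref{lem:2}. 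Taking $C_k := \max_{1\le i\le m} C_{k,i}$ (for $k=0,1,2,3,4$) and combining the factor $2$ from Jensen, the factor $2$ from the initial splitting, and the factor $2$ from Lemma~\ref{lem:2} produces the stated coefficient $4$ in front of each error term.

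The main obstacle is really only the bookkeeping: one must verify that Lemma~\ref{lem:2}'s derivation, which relied on bounds like $\|\nabla_y f(\x,\y)\|\le C_{fy}$ and $\|\nabla_{xy}^2 g(\x,\y)\|\le C_{gxy}$ via the projections $\Pi_{C_{fy}}$ and $\S^u_{C_{gxy}}$, carries over task-by-task under the $i$-indexed projections $\Pi_{C_{fy,i}}$ and $\S^u_{C_{gxy,i}}$ appearing in Algorithm~\ref{alg:2}. Since those projections are applied coordinate-wise in $i$ and the strong-convexity-plus-smoothness structure of $g_i$ is preserved by Assumption~\ref{ass:3}, this step is mechanical; no new inequality beyond Jensen and the $(a+b)^2$ split is required.
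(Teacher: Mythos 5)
Your proof is correct and follows essentially the same route as the paper: split off $\|\d_{t+1}-\frac{1}{m}\sum_i\z_{i,t+1}\|^2$ with the $\|a+b\|^2\le 2\|a\|^2+2\|b\|^2$ inequality, apply Jensen to average the per-task errors $\|\z_{i,t+1}-\nabla F_i(\x_t)\|^2$, and invoke the argument of Lemma~\ref{lem:2} task-by-task with the constants replaced by their maxima over $i$. One tiny remark on the narration: Jensen contributes no factor of $2$ (your displayed inequalities are right), so the coefficient $4$ comes only from the initial split times the $2C_{k,i}$ of Lemma~\ref{lem:2}, exactly as in the paper.
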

Next, we will bound each term in the RHS in the inequality of the above lemma separately. We first bound the first term. 

\begin{lemma} \label{lem:11}
Assume $\E_i[\|\z_i - \frac{1}{m}\sum_i\z_i \|^2]\leq \sigma_z^2$
\begin{align*}
&\E_{j_t}[\|\d_{t+1} - \frac{1}{m} \sum_{i=1}^m \z_{i,t+1}\|^2]\leq (1-\beta_t) \|\d_{t} - \frac{1}{m} \sum_{i=1}^m \z_{i,t}\|^2 + 2\beta_t^2\sigma^2 \\
& + C'_1\frac{1}{m}\sum_{i=1}^m[\|\u_{i, t+1} - \u_{i, t}\|^2] + C'_2\frac{1}{m}\sum_{i=1}^m[\|V_{i, t+1} - V_{i, t}\|^2] \\
&+  C'_3\frac{1}{m}\sum_{i=1}^m[\|H_{i, t+1} - H_{i, t}\|^2]   + C_4'\frac{1}{m}\sum_{i=1}^m[\|\v_{i, t+1} - \v_{i, t}\|^2]
\end{align*}
where $C_1' = 8$, $C_2'=\max\limits_{1\leq i\leq m}\frac{24C^2_{fy,i}}{\lambda}$, $C_3' = \max\limits_{1\leq i\leq m} \frac{24 C^2_{gxy,i}C^2_{fy,i}}{\lambda^4} $, $C_4'=\max\limits_{1\leq i\leq m} \frac{24C^2_{gxy,i}}{\lambda^2}$.  
\end{lemma}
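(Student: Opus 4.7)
The plan is to treat this as a STORM-type variance-reduction bound adapted to random sampling from $\{1,\ldots,m\}$. Denoting $\bar{\z}_t := \frac{1}{m}\sum_{i=1}^m \z_{i,t}$, I would first rewrite the update as $\d_{t+1} - \bar{\z}_{t+1} = (1-\beta_t)(\d_t - \bar{\z}_t) + E_t$, where $E_t := (1-\beta_t)(\bar{\z}_t - \z_{j_t,t}) + (\z_{j_t,t+1} - \bar{\z}_{t+1})$. Since $j_t$ is drawn uniformly and is independent of the history, $\E_{j_t}[E_t] = 0$, so expanding the square eliminates the cross term and yields $\E_{j_t}\|\d_{t+1}-\bar{\z}_{t+1}\|^2 \leq (1-\beta_t)^2\|\d_t-\bar{\z}_t\|^2 + \E_{j_t}\|E_t\|^2$; the prefactor is then further bounded by $(1-\beta_t)$ to match the stated recursion.

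Next I would regroup $E_t$ into the standard STORM form $E_t = \beta_t(\z_{j_t,t+1}-\bar{\z}_{t+1}) + (1-\beta_t)\bigl[(\z_{j_t,t+1}-\z_{j_t,t}) - (\bar{\z}_{t+1}-\bar{\z}_t)\bigr]$. Applying $\|a+b\|^2 \leq 2\|a\|^2 + 2\|b\|^2$, the first term contributes at most $2\beta_t^2 \sigma_z^2$ via the assumed variance bound $\E_i\|\z_i - \bar{\z}\|^2 \leq \sigma_z^2$, and the second is a mean-subtracted quantity whose second moment is dominated by $\frac{1}{m}\sum_{i=1}^m\|\z_{i,t+1}-\z_{i,t}\|^2$.

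The remaining work is to bound each per-task increment of $\z_{i,\cdot} = \u_{i,\cdot} - V_{i,\cdot}H_{i,\cdot}^{-1}\v_{i,\cdot}$. I would use the three-term add-and-subtract decomposition of the product together with the resolvent identity $H_{i,t+1}^{-1}-H_{i,t}^{-1} = H_{i,t+1}^{-1}(H_{i,t}-H_{i,t+1})H_{i,t}^{-1}$, and exploit the uniform bounds $\|V_{i,\cdot}\|\leq C_{gxy,i}$, $\|H_{i,\cdot}^{-1}\|\leq 1/\lambda$, $\|\v_{i,\cdot}\|\leq C_{fy,i}$ that hold deterministically thanks to the projections $\S^u_{C_{gxy,i}}$, $\S^l_\lambda$, $\Pi_{C_{fy,i}}$ in Algorithm~\ref{alg:2}. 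A four-term Young's inequality then produces $\|\z_{i,t+1}-\z_{i,t}\|^2$ bounded by a constant multiple of $\|\u_{i,t+1}-\u_{i,t}\|^2 + (C_{fy,i}^2/\lambda^2)\|V_{i,t+1}-V_{i,t}\|^2 + (C_{gxy,i}^2 C_{fy,i}^2/\lambda^4)\|H_{i,t+1}-H_{i,t}\|^2 + (C_{gxy,i}^2/\lambda^2)\|\v_{i,t+1}-\v_{i,t}\|^2$. Averaging over $i$ and absorbing the numerical factors into $C_1',\ldots,C_4'$ yields the stated inequality.

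The main obstacle is the last step: cleanly controlling the perturbation of the inverted Hessian while keeping the dependence on $1/\lambda, C_{fy,i}, C_{gxy,i}$ in the claimed product form, without inflating constants. The projections of Algorithm~\ref{alg:2} are precisely what make the per-task $\z$-increments a Lipschitz-type expression in the differences of $\u,\v,V,H$; without them, $\|H_{i,\cdot}^{-1}\|$ would not be uniformly controlled along the trajectory and this clean per-coordinate bound would break down.
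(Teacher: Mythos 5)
Your proposal matches the paper's own proof essentially step for step: the same zero-mean STORM decomposition of $\d_{t+1}-\frac{1}{m}\sum_{i=1}^m\z_{i,t+1}$ into the $(1-\beta_t)$-recursion plus $\beta_t(\z_{j_t,t+1}-\bar\z_{t+1})$ and a mean-subtracted increment term, the same use of the assumed variance bound for the $\beta_t^2$-term and of variance-at-most-second-moment for the increment term, and the same add-and-subtract bound on $\|\z_{i,t+1}-\z_{i,t}\|^2$ via the projection-induced bounds $\|\v_{i,\cdot}\|\leq C_{fy,i}$, $\|V_{i,\cdot}\|\leq C_{gxy,i}$, $\|H_{i,\cdot}^{-1}\|\leq 1/\lambda$. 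Your derived factor $C_{fy,i}^2/\lambda^2$ on the $V$-increment is in fact the correct scaling; the paper's stated $C_2'$ with a single $\lambda$ in the denominator appears to be a typo.
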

The following lemma bounds the last four terms in RHS in the inequalities of the above two lemmas. 

\begin{lemma}\label{lem:12}Let $\u = (\u_1, \u_2, \ldots, \u_m)$. We apply RC-STROM update to $\u_{t+1}$ with unbiased stochastic estimators $\widetilde h(\e_t; \xi_t, i_t)= (0, \ldots, \frac{h_{i_t}(\e_t; \xi_t)}{p_{i_t}}, 0)$ and $\widetilde h(\e_{t-1}; \xi_t, i_t)= (0, \ldots, \frac{h_{i_t}(\e_{t-1}; \xi_t)}{p_{i_t}}, 0)$, i.e., $\u_{t+1} = \Pi_{\Omega}[(1-\beta_t)(\u_t - \widetilde h(\e_{t-1}; \xi_t, i_t)) + \widetilde h(\e_t; \xi_t, i_t)]$. Assume $\E[\|h_{i}(\e_t; \xi_t)\|^2]\leq C^2_i$ and $h_i$ is $L_i$-Lipschitz continuous. Define $C(\p) = \sum_{i=1}^m C^2_i/p_i$ and $L(\p) = \sum_{i=1}^mL^2_i/p_i$ and $\delta_t = \|\u_{t+1} - h(\e_t)\|^2$. Then we have
\begin{align*}
&\E_{\xi_t, i_t}[\|\u_{t+1}-\u_t\|^2]  \leq  4\beta_t^2\delta_{t-1} + 2\beta_t^2C(\p)+ 4(L(\p)+ \beta_t^2 \sum_iL^2_i)\|\e_t - \e_{t-1}\|^2
\end{align*}
and 
\begin{align*}
\E_{\xi_t, i_t}[\delta_t]\leq (1-\beta_t)\delta_{t-1} + 2\beta_t^2 C(\p) + 2L(\p)\|\e_{t} - \e_{t-1}\|^2.
\end{align*}
\end{lemma}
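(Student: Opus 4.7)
The plan is to prove both inequalities of Lemma~\ref{lem:12} via direct manipulation of the RC-STORM recursion, relying on two key importance-sampling identities that follow immediately from the definition of $\widetilde h$ with importance weight $1/p_{i_t}$: first, $\E_{\xi_t,i_t}[\|\widetilde h(\e;\xi_t,i_t)\|^2]=\sum_i \E_\xi[\|h_i(\e_i;\xi)\|^2]/p_i\leq C(\p)$, which controls the variance of a single evaluation; second, $\E_{\xi_t,i_t}[\|\widetilde h(\e_t;\xi_t,i_t)-\widetilde h(\e_{t-1};\xi_t,i_t)\|^2]=\sum_i \E_\xi[\|h_i(\e_{i,t};\xi)-h_i(\e_{i,t-1};\xi)\|^2]/p_i\leq L(\p)\|\e_t-\e_{t-1}\|^2$, which controls the Lipschitz-type variation across consecutive iterates. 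Both identities exploit the fact that only the $i_t$-th coordinate of the randomized estimator is non-zero, so squared norms reduce to single sums over coordinates weighted by $1/p_i$.

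For the second (recursive variance-reduction) inequality, I would first drop the projection using non-expansiveness centered at $h(\e_t)\in\Omega$, then write
\begin{equation*}
\u_{t+1}-h(\e_t)=(1-\beta_t)(\u_t-h(\e_{t-1}))+A_t,
\end{equation*}
where $A_t=(1-\beta_t)(h(\e_{t-1})-\widetilde h(\e_{t-1};\xi_t,i_t))+(\widetilde h(\e_t;\xi_t,i_t)-h(\e_t))$ has conditional mean zero. Expanding the square makes the cross term vanish under $\E_{\xi_t,i_t}$, yielding $(1-\beta_t)^2\delta_{t-1}+\E\|A_t\|^2$. Rewriting $A_t=(1-\beta_t)B_t+\beta_t C_t$ with $B_t=(\widetilde h(\e_t;\xi_t,i_t)-\widetilde h(\e_{t-1};\xi_t,i_t))-(h(\e_t)-h(\e_{t-1}))$ and $C_t=\widetilde h(\e_t;\xi_t,i_t)-h(\e_t)$ and applying $\|a+b\|^2\leq 2\|a\|^2+2\|b\|^2$, the two identities above bound $\E\|B_t\|^2\leq L(\p)\|\e_t-\e_{t-1}\|^2$ (using that $B_t$ is a centered version of the Lipschitz-controlled difference) and $\E\|C_t\|^2\leq C(\p)$. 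Collapsing $(1-\beta_t)^2\leq 1-\beta_t$ finishes the second claim.

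For the first inequality, I would start from the identity
\begin{equation*}
\u_{t+1}-\u_t = -\beta_t\bigl(\u_t-\widetilde h(\e_{t-1};\xi_t,i_t)\bigr)+\bigl(\widetilde h(\e_t;\xi_t,i_t)-\widetilde h(\e_{t-1};\xi_t,i_t)\bigr),
\end{equation*}
again obtained after dropping the projection by non-expansiveness centered at $\u_t\in\Omega$. I would then add and subtract $h(\e_{t-1})$ inside the first parenthesis, and also add/subtract $h(\e_t)-h(\e_{t-1})$ inside the second, to reach a four-term decomposition isolating (i) the error $\u_t-h(\e_{t-1})$, (ii) the deterministic Lipschitz increment $h(\e_t)-h(\e_{t-1})$, (iii) the noise $\widetilde h(\e_{t-1};\xi_t,i_t)-h(\e_{t-1})$, and (iv) the centered stochastic increment $B_t$. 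Applying Young's inequality with appropriately tuned weights (e.g.\ $\|a+b+c+d\|^2\leq 4\|a\|^2+4\|b\|^2+2\|c\|^2+4\|d\|^2$), together with the importance-sampling identities and the deterministic bound $\|h(\e_t)-h(\e_{t-1})\|^2\leq(\sum_i L_i^2)\|\e_t-\e_{t-1}\|^2$ (without importance weights, since no randomness is involved), should reproduce the claimed coefficients $4\beta_t^2\delta_{t-1}$, $2\beta_t^2 C(\p)$, $4L(\p)\|\e_t-\e_{t-1}\|^2$, and the extra $4\beta_t^2\sum_i L_i^2\|\e_t-\e_{t-1}\|^2$.

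The main obstacle I anticipate is calibrating Young's inequality to match the exact constants in the first bound, especially the appearance of $\beta_t^2\sum_i L_i^2$ (as opposed to $L(\p)$) in front of $\|\e_t-\e_{t-1}\|^2$. This term must arise from a deterministic piece that is multiplied by $\beta_t$ in the decomposition (so that squaring produces $\beta_t^2$), which forces a specific choice of add/subtract terms; the other pieces must absorb the remaining Lipschitz variation through the importance-weighted bound. The algebraic bookkeeping is delicate but routine once this decomposition is in place.
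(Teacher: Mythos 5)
Your overall strategy---expand the RC-STORM recursion, use the importance-weighted second-moment bounds $\E_{\xi_t,i_t}\|\widetilde h(\e;\xi_t,i_t)\|^2\le C(\p)$ and $\E_{\xi_t,i_t}\|\widetilde h(\e_t;\xi_t,i_t)-\widetilde h(\e_{t-1};\xi_t,i_t)\|^2\le L(\p)\|\e_t-\e_{t-1}\|^2$, and drop the projections by non-expansiveness---is the same as the paper's. Your argument for the second inequality (writing the error recursion with the conditionally zero-mean $A_t=(1-\beta_t)B_t+\beta_t C_t$, killing the cross term, and using variance $\le$ second moment) is correct and complete; in fact the paper's displayed proof of this lemma only covers the first inequality and leaves the second to exactly the standard STORM argument you spell out.

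For the first inequality, however, the plan as written has a concrete gap. The auxiliary inequality you propose, $\|a+b+c+d\|^2\le 4\|a\|^2+4\|b\|^2+2\|c\|^2+4\|d\|^2$, is false: for such a bound to hold for all vectors the reciprocals of the coefficients must sum to at most one, and here $1/4+1/4+1/2+1/4=5/4$ (take all four vectors proportional to a common direction with lengths $1/c_j$ to see it fail). The coefficient $2$ on the $C(\p)$ term cannot come from Young's inequality at all; in the paper it comes from conditional unbiasedness, i.e.\ $\E_{\xi_t,i_t}[h(\e_t)-\widetilde h(\e_t;\xi_t,i_t)]=0$ makes the cross terms vanish. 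Moreover, you correctly sense the second difficulty yourself: with your grouping $-\beta_t(\u_t-\widetilde h(\e_{t-1};\xi_t,i_t))+(\widetilde h(\e_t;\xi_t,i_t)-\widetilde h(\e_{t-1};\xi_t,i_t))$ the deterministic increment $h(\e_t)-h(\e_{t-1})$ is not damped by $\beta_t$, so the stated coefficient $4\beta_t^2\sum_iL_i^2$ cannot arise. The paper's fix is to group the other way, $\u_{t+1}-\u_t=-\beta_t(\u_t-\widetilde h(\e_t;\xi_t,i_t))+(1-\beta_t)(\widetilde h(\e_t;\xi_t,i_t)-\widetilde h(\e_{t-1};\xi_t,i_t))$, apply Young with factor $2$, and then inside the $\beta_t$-bracket insert $h(\e_{t-1})$ and $h(\e_t)$: the zero-mean piece $h(\e_t)-\widetilde h(\e_t;\xi_t,i_t)$ contributes $2\beta_t^2C(\p)$, and the Lipschitz increment now carries the factor $\beta_t$, giving $4\beta_t^2\|h(\e_t)-h(\e_{t-1})\|^2\le 4\beta_t^2\sum_iL_i^2\|\e_t-\e_{t-1}\|^2$, while the second bracket contributes $2(1-\beta_t)^2L(\p)\|\e_t-\e_{t-1}\|^2\le 4L(\p)\|\e_t-\e_{t-1}\|^2$. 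Your grouping can in fact be salvaged: using conditional unbiasedness of the combined noise and $\sum_iL_i^2\le L(\p)$ it yields $2\beta_t^2\delta_{t-1}+2\beta_t^2C(\p)+(2L(\p)+2\sum_iL_i^2)\|\e_t-\e_{t-1}\|^2$, which implies the stated bound; but the decomposition plus the invalid Young step, as written, does not close the argument.
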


\begin{proof}[Proof of Theorem~\ref{thm:2}]  
Define $C_{fx}(\p)$, $C_{fy}(\p)$, $C_{gy}(\p)$,  $C_{gxy}(\p)$, $C_{gyy}(\p)$, $L_{fx}(\p)$,  $L_{fy}(\p)$, $L_{gy}(\p)$, $L_{gxy}(\p)$,  $L_{gyy}(\p)$ according to that in Lemma~\ref{lem:12} using Assumption~\ref{ass:4}. Let $L_{fx} =\sum_i L^2_{fx,i}$,  $L_{fy} =\sum_i L^2_{fy,i}$,  $L_{gy} =\sum_i L^2_{gy,i}$,  $L_{gxy} =\sum_i L^2_{gxy,i}$,  $L_{gyy} =\sum_i L^2_{gyy,i}$. We can see that $L_{*}\leq L_*(\p)$. Define $C(\p) = C_{fx}(\p)+C_{fy}(\p)+C_{gy}(\p)+C_{gxy}(\p)+C_{gyy}(\p)$ and $L(\p) = L_{fx}(\p) + L_{fy}(\p) + L_{gy}(\p) + L_{gxy}(\p) + L_{gyy}(\p)$.

Let $C'_{max} = 8\max\{C_1', C_2', C_3', C_4'\}$.
By the setting of $\eta_t = \eta$, we have $1/\eta_t - 1/\eta_{t-1}=0$, with $\beta_{gy, t+1} = \frac{8\tau}{\lambda}\frac{\tau_tC\gamma}{\eta}\eta^2$ (where $\beta_{gy, t+1}< 1$ for small enough  constant $\gamma$ and large enough constant $c_0$,  where $C$ is given below).  
Set $\beta_{fx, t+1} =  3C_1 \gamma \eta^2$, 
$\beta_{fy, t+1} =  3C_4 \gamma \eta^2$, 
$\beta_{gxy, t+1} =  3C_2 \gamma \eta^2$, 
$\beta_{gyy, t+1} =  3C_3 \gamma \eta^2$, 
$\beta_{t+1} = 2\gamma C'_{max} \eta^2$,  
$\frac{C\tau_t\tau\lambda}{4} \geq C_0\eta$  and $\frac{C\gamma}{4\tau_t}\geq \frac{6L(\p)}{\eta}$. 
With $\eta\leq \min \{\frac{1}{\sqrt{3C_1 \gamma}},\frac{1}{\sqrt{3C_2 \gamma}},\frac{1}{\sqrt{3C_3 \gamma}}, \frac{1}{\sqrt{3C_4 \gamma}}, \frac{1}{\sqrt{\gamma C_{max}'}}\}$, all $\beta_{*, t+1} < 1$.

Adding all recursions for $\delta_{y, t} = \|\y_{t+1} - \y*(\x_t)\|^2, \delta_{gy, t}=\|\w_{t+1} - \nabla_y g(\x_t, \y_t)\|^2, \delta_{fx,t}=\|\u_{t+1} - \nabla_x f(\x_t, \y_t)\|^2$, $\delta_{fy,t}=\|\v_{t+1} - \nabla_y f(\x_t; \y_t)\|^2$, $\delta_{gxy, t}= \|V_{t+1} - \nabla_{xy}^2 g(\x_t, \y_t)\|_F^2$, $\delta_{gyy,t} = \|H_{t+1} - \nabla_{yy}^2 g(\x_t, \y_t)\|^2$ and $\delta_{d,t} = \|\d_{t+1} - \frac{1}{m}\sum_{i=1}^m\z_{i,t+1}\|^2$, we have
\begin{align*} 
&\E\bigg[\sum_{t=0}^T\frac{C\gamma}{m}(\delta_{y,t+1} - \delta_{y,t})\bigg] + \E\bigg[\sum_{t=0}^T(\frac{\delta_{gy, t+1}}{m\eta} - \frac{\delta_{gy, t}}{m\eta_{t-1}})\bigg] + \E\bigg[\sum_{t=0}^T(\frac{\delta_{fx, t+1}}{m\eta} -  \frac{\delta_{fx, t}}{m\eta_{t-1}})\bigg] \\
&+  \E\bigg[\sum_{t=0}^T(\frac{\delta_{d, t+1}}{C_{\max}'\eta} - \frac{\delta_{d, t}}{C'_{\max}\eta_{t-1}})\bigg] +  \E\bigg[\sum_{t=0}^T(\frac{\delta_{fy, t+1}}{m\eta} - \frac{\delta_{fy, t}}{m\eta_{t-1}})\bigg] \\
& + \E\bigg[\sum_{t=0}^T(\frac{\delta_{gxy, t+1}}{m\eta} - \frac{\delta_{gxy, t}}{m\eta_{t-1}})\bigg]  +  \E\bigg[\sum_{t=0}^T(\frac{\delta_{gyy, t+1}}{m\eta} - \frac{\delta_{gyy, t}}{m\eta_{t-1}})\bigg] \\
&\leq \sum_{t=0}^T\frac{2(\beta_{gy, t+1}^2+ \beta_{fx,t+1}^2 + \beta_{fy,t+1}^2 + \beta_{gxy, t+1}^2 + \beta_{gyy,t+1}^2 )C(\p)}{m\eta}  + \frac{2\beta_{t+1}^2\sigma^2}{C'_{\max}\eta}+  \sum_{t=0}^T(- \frac{C\gamma\tau_t\tau\lambda}{4m}) \E[\delta_{y,t}]\\
&- \sum_{t=0}^T 3C_1\gamma\eta \frac{\E[\delta_{fx,t}]}{m} -  \sum_{t=0}^T 3C_4 \gamma \eta \frac{\delta_{fy,t}}{m} 
- \sum_{t=0}^T 3C_2\gamma \eta \frac{\E[\delta_{gxy,t}]}{m} -\sum_{t=0}^T 3C_3 \gamma\eta \frac{\E[\delta_{gyy,t}]}{m}  
- \sum_{t=0}^T 2\gamma\eta \E[\delta_{d,t}]\\ 
& + \sum_{t=0}^T\frac{8C\gamma^3\eta^2L_y^2}{\tau_t\tau\lambda m}\E[\|\d_{t+1}\|^2]  - \sum_{t=0}^T \frac{2C\gamma\tau}{\tau_t m}(1+ \frac{\tau_t\tau \lambda}{4})(\frac{1}{2\tau} - \frac{3L_g}{4} )\E[\|\y_t- \y_{t+1}\|^2]\\  
&+  \sum_{t=0}^T\frac{2L(\p)}{m\eta}\E(\|\x_{t+1} - \x_t\|^2 + \|\y_{t+1} - \y_t\|^2)\\
& + \frac{1}{8m}\sum_{t=0}^{T}\left[\frac{1}{\eta}\E[\|\u_{t+2} - \u_{t+1}\|^2] + \frac{1}{\eta}\E[\|V_{t+2} - V_{t+1}\|_F^2]\right] \\
&+  \frac{1}{8m}\sum_{t=0}^{T}\left[\frac{1}{\eta}[\E\|H_{t+2} - H_{t+1}\|_F^2]   +  \frac{1}{\eta}[\E\|\v_{t+2} - \v_{t+1}\|^2]\right]
\end{align*} 

For the last four terms, we can bound them by
\begin{equation}
\begin{split}
&\frac{1}{m}\sum\limits_{t=0}^{T}(\frac{\beta^2_{fx,t+1}}{2\eta}\delta_{fx,t} + \frac{\beta^2_{gxy,t+1}}{2\eta}\delta_{gxy,t} + \frac{\beta^2_{gyy,t+1}}{2\eta}\delta_{gyy,t} + \frac{\beta^2_{fy,t+1}}{2\eta}\delta_{fy,t} ) \\
&+\frac{1}{m}\sum\limits_{t=0}^{T}(\frac{\beta^2_{fx,t+1}C_{fx}(\p)}{4\eta} + \frac{\beta^2_{gxy,t+1} C_{gxy}(\p)}{4\eta} + \frac{\beta^2_{gyy,t+1}C_{gyy}(\p)}{4\eta} + \frac{\beta^2_{fy,t+1} C_{fy}(\p)}{4\eta} ) \\
& +\frac{1}{m}\sum\limits_{t=0}^{T}\frac{(L_{fx}(\p)+L_{fx}+L_{fy}(\p)+L_{fy}+ L_{gxy}(\p) + L_{gxy} + L_{gyy}(\p)+L_{gyy})} {2\eta}\E(\|\x_{t+1}-\x_t\|^2\\
& + \|\y_{t+1}-\y_t\|^2).
\end{split}
\end{equation}

With $\gamma (\frac{8C \gamma L_y^2}{\tau\lambda m} + 3\frac{L(\p)}{m})\leq 1/4$ (by $\gamma\leq \min\{\frac{\sqrt{\tau\lambda m}}{8\sqrt{C}L_y},  \frac{m}{24L(\p)}\}$ ) and $C\geq\max\{\frac{6L(\p)}{\gamma \tau}, 12C_0 \tau \lambda\}$, we have  
\begin{align*} 
&\E\bigg[\sum_{t=0}^T\frac{C\gamma}{m}(\delta_{y,t+1} - \delta_{y,t})\bigg] + \E\bigg[\sum_{t=0}^T(\frac{\delta_{gy, t+1}}{m\eta} - \frac{\delta_{gy, t}}{m\eta})\bigg] + \E\bigg[\sum_{t=0}^T(\frac{\delta_{fx, t+1}}{m\eta} - \frac{\delta_{fx, t}}{m\eta})\bigg] \\
&+  \E\bigg[\sum_{t=0}^T(\frac{\delta_{d, t+1}}{C'_{\max}\eta} - \frac{\delta_{d, t}}{C'_{\max}\eta})\bigg]
+  \E\bigg[\sum_{t=0}^T(\frac{\delta_{fy, t+1}}{m\eta} - \frac{\delta_{fy, t}}{m\eta})\bigg]\\
& + \E\bigg[\sum_{t=0}^T(\frac{\delta_{gxy, t+1}}{m\eta} - \frac{\delta_{gxy, t}}{m\eta})\bigg]  + \E\bigg[\sum_{t=0}^T(\frac{\delta_{gyy, t+1}}{m\eta} - \frac{\delta_{gyy, t}}{m\eta})\bigg] \\ 
&\leq \sum_{t=0}^T\frac{3(\beta_{gy, t+1}^2+ \beta_{fx,t+1}^2 + \beta_{fy,t+1}^2 + \beta_{gxy, t+1}^2 + \beta_{gyy,t+1}^2)C(\p)}{m\eta} + \frac{2\beta_{t+1}^2\sigma^2}{C'_{\max}\eta} 
+ \frac{1}{4}\gamma \eta \E\|\d_{t+1}\|^2\\
& -  \sum_{t=0}^T3C_0\gamma \eta\frac{\E[\delta_{y,t}]}{m}-  \sum_{t=0}^T3C_1\gamma\eta\frac{\E[\delta_{fx,t}]}{m} -  \sum_{t=0}^T3C_4\gamma\eta \frac{\E[\delta_{fy,t}]}{m}-  \sum_{t=0}^T3C_2\gamma\eta \frac{\E[\delta_{gxy,t}]}{m} \\
& -   \sum_{t=0}^T3C_3\gamma\eta  \frac{\E[\delta_{gyy,t}]}{m} - \sum_{t=0}^T2\gamma\eta \E[\delta_{d,t}].  
\end{align*}
Adding the above inequality with that in  Lemma~\ref{lem:1} (noting that notation $\d$ in this subsection is corresponding to $\z$ in Lemma~\ref{lem:1}), we have 
\begin{equation}
\begin{split}
&\E\bigg[\sum_{t=0}^T \frac{\gamma\eta}{2}\|\nabla F(\x_t)\|^2 \bigg]+\E\bigg[\sum_{t=0}^T\frac{C\gamma}{m}(\delta_{y,t+1} - \delta_{y,t})\bigg] + \E\bigg[\sum_{t=0}^T\frac{\delta_{gy, t+1}}{m\eta} - \frac{\delta_{gy, t}}{m\eta}\bigg]\\
&+ \E\bigg[\sum_{t=0}^T\frac{\delta_{fx, t+1}}{m\eta} - \frac{\delta_{fx, t}}{m\eta}\bigg] +  \E\bigg[\sum_{t=0}^T\frac{\delta_{d, t+1}}{C'_{\max}\eta} - \frac{\delta_{d, t}}{C'_{\max}\eta}\bigg]\\
& +  \E\bigg[\sum_{t=0}^T\frac{\delta_{fy, t+1}}{m\eta} - \frac{\delta_{fy, t}}{m\eta}\bigg]+ \E\bigg[\sum_{t=0}^T\frac{\delta_{gxy, t+1}}{m\eta} - \frac{\delta_{gxy, t}}{m\eta}\bigg]  + \E\bigg[\sum_{t=0}^T\frac{\delta_{gyy, t+1}}{m\eta} - \frac{\delta_{gyy, t}}{m\eta}\bigg] \\
&  \leq \E\bigg[F(\x_0)  - F(\x_*) \bigg] + \gamma^2\frac{C(\p)}{m}O(1)\sum_{t=0}^T\eta^3 -  \sum_{t=0}^TC_0\gamma \eta\frac{\E[\delta_{y,t}]}{m}-  \sum_{t=0}^TC_1\gamma\eta\frac{\E[\delta_{fx,t}]}{m} \\
&~~~ -  \sum_{t=0}^TC_4\gamma\eta \frac{\E[\delta_{fy,t}]}{m}-  \sum_{t=0}^TC_2\gamma\eta \frac{\E[\delta_{gxy,t}]}{m} -   \sum_{t=0}^T C_3\gamma\eta  \frac{\E[\delta_{gyy,t}]}{m} - \sum_{t=0}^T \gamma\eta \E[\delta_{d,t}].  
\end{split}
\end{equation}

As a result, 
\begin{equation}
\label{equ:one_stage}
\begin{split}
&\E\bigg[\sum_{t=0}^T \frac{\gamma\eta}{2}\|\nabla F(\x_t)\|^2\bigg] +  \sum_{t=0}^TC_0\gamma \eta\frac{\E[\delta_{y,t}]}{m}+ \sum_{t=0}^TC_1\gamma\eta\frac{\E[\delta_{fx,t}]}{m} +  \sum_{t=0}^TC_4\gamma\eta \frac{\E[\delta_{fy,t}]}{m}+  \sum_{t=0}^TC_2\gamma\eta \frac{\E[\delta_{gxy,t}]}{m} \\
&+ \sum_{t=0}^T C_3\gamma\eta  \frac{\E[\delta_{gyy,t}]}{m} + \sum_{t=0}^T \gamma\eta \E[\delta_{d,t}] \\
& \leq \Delta + \frac{C\gamma \E[\delta_{y, 0}]}{m} + \frac{\E[\delta_{gy,0} +\delta_{fx,0} +  \delta_{fy,0} + \delta_{gxy,0} + \delta_{gyy,0}]}{\eta m}   +  \frac{\E[\delta_{d,0}]}{C'_{\max}\eta} 
+ \gamma^2\frac{C(\p)}{m}C_5\eta^3(T+1), 
\end{split} 
\end{equation}
where $\Delta > \E[F(\x_0) - F(\x_*)$ and 
$C_5 = 3(9C_1^2 + 9C_4^2 + 9C_2^2+9C_3^2) +  \frac{8C'_{max} \sigma^2 m}{C(\p)} \in O(1)$.  
Dividing both sides by $\gamma \eta (T+1)/2$, we have
\begin{align*}
\E\bigg[\frac{1}{T+1}\sum_{t=0}^T\|\nabla F(\x_t)\|^2\bigg]\leq& \frac{2\Delta}{\gamma \eta T} + \frac{2\E(\delta_{gy,0} +\delta_{fx,0} +  \delta_{fy,0} + \delta_{gxy,0} + \delta_{gyy,0})}{\eta  m \gamma \eta T}   + \frac{\E[\delta_{d,0}]}{C'_{\max}\eta \gamma\eta T} \\
& + \gamma\frac{C(\p)}{m}C_5 \eta^2 + \frac{2C \E[\delta_{y, 0}]}{m\eta T}. 
\end{align*}

Note that $\gamma/m = O(1/L(\p)) = O(1/C(\p))$.
By choosing $\eta = \frac{\epsilon}{2}\sqrt{\frac{m}{\gamma C(\p) C_5}}=O(\epsilon)$, $T = O(C(\p)/(m\epsilon^3))$, $\u_0, \v_0, \w_0, V_0, H_0, d_0$ such that the maximum of $\E[\delta_{gy,0}], \E[\delta_{fx,0}],  \E[\delta_{fy,0}],$ $\E[\delta_{gxy,0}], \E[\delta_{gyy,0}] \leq \eta m = O(m\epsilon)$, which can be achieved by using a large mini-batch at the beginning $T_0 = 1/(\eta m) = O(\frac{1}{m\epsilon})$, and $\d_0 = 0$, which can be achieved by processing all lower problems at the beginning, $\delta_{y,0}\leq m/\gamma = O(L(\p))$, which can be achieved by computing the finding a good initial solution $\y_0$ with accuracy $O(L(\p))$ with a complexity of $O(1/(L(\p)))$. In the uniform sampling case, $C(\p)/m = O(m), L(\p)/ m = O(m)$. The worse case iteration complexity is $T + T_0=O(m/\epsilon^3)$. 

\end{proof}

\section{Proof of Theorem \ref{thm:main}}
\begin{proof}
Applying (\ref{equ:one_stage}) in Appendix \ref{sec:thm2}, and defining a constant 
$C_6 = \min\{2, C_0, C_1, C_2, C_3, C_4\}$, we obtain that for any of the $K$ epochs, 
\begin{equation}
\begin{split}
&\E\bigg[\sum_{t=0}^T C_6 \gamma\eta \|\nabla F(\x_t)\|^2 +  \sum_{t=0}^TC_6\gamma \eta\frac{\E[\delta_{y,t}]}{m}+  \sum_{t=0}^TC_6\gamma\eta\frac{\E[\delta_{fx,t}]}{m} +  \sum_{t=0}^TC_6\gamma\eta \frac{\E[\delta_{fy,t}]}{m}\\
&~~~ +  \sum_{t=0}^TC_6\gamma\eta \frac{\E[\delta_{gxy,t}]}{m} 
+ \sum_{t=0}^T C_6\gamma\eta  \frac{\E[\delta_{gyy,t}]}{m} + \sum_{t=0}^T C_6 \gamma\eta \E[\delta_{d,t}] \bigg]\\ 
& \leq \Delta + \frac{C\gamma \E[\delta_{y, 0}]}{m} + \frac{\E[\delta_{gy,0} +\delta_{fx,0} +  \delta_{fy,0} + \delta_{gxy,0} + \delta_{gyy,0}]}{\eta m}   +  \frac{\E[\delta_{d,0}]}{C'_{\max}\eta} \\
&~~~ + \gamma^2\frac{C(\p)}{m}C_5\eta^3(T+1). 
\end{split} 
\label{equ:one_stage_use}
\end{equation}

From Theorem \ref{thm:2}, we know that it is required that $\eta_k\leq \min\{\frac{1}{\sqrt{3(C_1+C_2+C_3+C_4+C'_{max})\gamma}}, \frac{1}{2L_F \gamma}\}:=\eta_0$. 
Without loss of generality, let us assume that $\epsilon_0=\Delta > \frac{5\eta_0^2\gamma C(\p) C_5}{C_6 m \mu}$, i.e.,  $\sqrt{\frac{C_6m\mu\epsilon_0}{5\gamma C(\p) C_5}} > \eta_0$. The case that $\Delta \leq  \frac{5\eta_0^2\gamma C(\p) C_5}{C_6 m \mu}$can be simply covered by our proof. Then denote $\epsilon_1 =  \frac{5\eta_0^2\gamma C(\p) C_5}{C_6 m \mu}$ and $\epsilon_k=\epsilon_1/2^{k-1}$.  

In the first epoch ($k=1$), we have initialization such that $F(\x_0)-F(\x_*)\leq \Delta$. In the following, we let the last subscript denote the epoch index. Setting $\eta_1 = \eta_0$ and $T_1=\max\{\frac{5\Delta}{\mu C_6 \eta_1 \gamma}, \frac{5C\gamma\E[\delta_{y,0}]}{\mu C_6 m \eta_1 \gamma}, \frac{5\E[\delta_{gy,0}+\delta_{fx,0}+\delta_{fy,0}+\delta_{gxy,0}+\delta_{gyy,0}]}{\mu C_6},\frac{5\E[\delta_{d,0}]}{\mu C_6 C_{max}' \eta_1^2 \gamma}\}$, we bound the error of the first stage's output as follows,
\begin{equation} 
\begin{split}    
&\E\bigg[\|\nabla F(\x_1)\|^2\bigg] +  \frac{\E[\delta_{y,1}]}{m}+   \frac{\E[\delta_{fx,1}]}{m} +   \frac{\E[\delta_{fy,1}]}{m}+  \frac{\E[\delta_{gxy,1}]}{m} +   \frac{\E[\delta_{gyy,1}]}{m} + \E[\delta_{d,1}] \\ 
& \leq \frac{\Delta}{C_6 \eta_1\gamma T_1} + \frac{C\gamma \E[\delta_{y, 0}]}{C_6 m \eta_1\gamma T_1} + \frac{\E[\delta_{gy,0} +\delta_{fx,0} +  \delta_{fy,0} + \delta_{gxy,0} + \delta_{gyy,0}]}{C_6 m \eta_1^2\gamma T_1}  \\  
&~~~ +  \frac{\E[\delta_{d,0}]}{C_6 C'_{\max}\eta_1^2 \gamma T_1}  
+ \gamma\frac{C(\p)}{C_6 m}C_5\eta_1^2\\  
&\leq \frac{5\eta_0^2 \gamma C(\p)C_5}{C_6 m} = \mu \epsilon_1,
\end{split} 
\end{equation} 
where the first inequality uses  (\ref{equ:one_stage_use}) and the fact that the output of each epoch is randomly sampled from all iterations, and the last line uses the choice of $\eta_1, T_1, \epsilon_1$.  
It follows that
\begin{equation}
\begin{split}
\E[F(\x_1) - F(\x_*)] \leq \frac{1}{2\mu} \E[\|\nabla F(\x_1)\|^2] \leq \frac{\epsilon_1}{2}. 
\end{split}    
\end{equation}

Starting from the second stage, we  will prove by induction. Suppose we are at $k$-th stage. Assuming that the output of $(k-1)$-th stage satisfies that  $\E[F(\x_{k-1}) - F(\x_*)] \leq \epsilon_{k-1}$ and $\E[\frac{\delta_{y,k-1}}{m} +  \frac{\E[\delta_{gy,k-1}+\delta_{fx,k-1}+\delta_{fy,k-1}+\delta_{gxy,k-1} + \delta_{gyy,k-1}}{m} +\delta_{d,k-1}] \leq \mu\epsilon_{k-1}$. 
We have 
\begin{equation}
\begin{split}
&\E[\|\nabla F(\x_k)\|^2] +  \frac{\E[\delta_{y,k}]}{m}+   \frac{\E[\delta_{fx,k}]}{m} +   \frac{\E[\delta_{fy,k}]}{m}+  \frac{\E[\delta_{gxy,k}]}{m} +   \frac{\E[\delta_{gyy,k}]}{m} + \E[\delta_{d,1}] \\ 
& \leq \frac{\E[F(\x_{k-1})-F(\x_*)]}{C_6 \eta_k\gamma T_k} + \frac{C\gamma \E[\delta_{y, k-1}]}{C_6 m \eta_k\gamma T_k} +  \frac{\E[\delta_{d,k-1}]}{C_6 C'_{\max}\eta_1^2\gamma T_k} \\
&+ \frac{\E[\delta_{gy,k-1} +\delta_{fx,k-1} +  \delta_{fy,k-1} + \delta_{gxy,k-1} + \delta_{gyy,k-1}]}{C_6 m \eta_k^2\gamma T_k} 
+ \gamma\frac{C(\p)}{C_6 m}C_5\eta_k^2\\
&\leq \frac{\epsilon_{k-1}}{C_6\eta_k\gamma T_k}
+ \frac{C\gamma \mu\epsilon_{k-1}}{C_6 m \eta_k\gamma T_k} + \frac{\mu\epsilon_{k-1}}{C_6 m \eta_k^2\gamma T_k}  +  \frac{\mu\epsilon_{k-1}}{C_6 C'_{\max}\eta_k^2\gamma T_k} 
+ \gamma\frac{C(\p)}{C_6 m}C_5\eta_k^2\\ 
&\leq \mu \epsilon_k, 
\end{split}
\end{equation}
where the last inequality holds by the setting $\eta_k = \sqrt{\frac{C_6 m \mu\epsilon_k}{5\gamma C(\p) C_5}}$, $T_k=\max\{\frac{10}{\mu C_6\eta_k\gamma}, \frac{10C\gamma}{C_6m\eta_k\gamma}, \frac{10}{C_6 m \eta_k^2 \gamma}, \frac{10}{C_6 C_{max}'\eta_k^2 \gamma}\}$. 

It follows that
\begin{equation}
\begin{split}
\E[F(\x_k) - F(\x_*)] \leq \frac{1}{2\mu} \E[\|\nabla F(\x_k)\|^2] \leq \frac{\epsilon_k}{2}. 
\end{split}    
\end{equation}

Thus, after $K = 1+\log_2(\epsilon_1/\epsilon) \leq \log_2(\epsilon_0/\epsilon)$ stages, $\E[F(\x_k) - F(\x_*)] \leq \epsilon$.

\end{proof}

\section{Proof of Lemmas}

\subsection{Useful Lemmas}
\begin{lemma}(\citep{99401}[Lemma 2.2])Under Assumptions~\ref{ass:1}, ~\ref{ass:2} or Assumptions~\ref{ass:3}, ~\ref{ass:4}, we have
\begin{align*}
&\|\nabla F(\x) - \nabla F(\x')\|\leq L_F\|\x- \x'\|\\
&\|\y^*(\x) - \y^*(\x')\|\leq L_y \|\x - \x'\|
\end{align*}
where $L_y$ and $L_F$ are appropriate constants. 
\end{lemma}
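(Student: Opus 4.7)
The plan is to establish the two Lipschitz bounds in order, since the bound on $\nabla F$ will reuse the bound on $\y^*$. Both arguments rest on the optimality condition $\nabla_y g(\x, \y^*(\x)) = 0$ (or $\nabla_{y_i} g_i(\x, \y_i^*(\x)) = 0$ in the multi-task case), the strong convexity in Assumption~\ref{ass:1}/\ref{ass:3}, and the Lipschitz/boundedness conditions in Assumption~\ref{ass:2}/\ref{ass:4}.

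For $\y^*$: Fix $\x, \x'$ and start from $\lambda\|\y^*(\x)-\y^*(\x')\| \leq \|\nabla_y g(\x,\y^*(\x)) - \nabla_y g(\x, \y^*(\x'))\|$, which is the standard consequence of $\lambda$-strong convexity of $g(\x,\cdot)$. Using $\nabla_y g(\x,\y^*(\x)) = 0$, the right-hand side becomes $\|\nabla_y g(\x,\y^*(\x'))\|$, and then by inserting $\nabla_y g(\x',\y^*(\x')) = 0$ I would bound it by $\|\nabla_y g(\x,\y^*(\x'))-\nabla_y g(\x',\y^*(\x'))\|$. The Lipschitz continuity of $\nabla_y g(\cdot,\y)$ in $\x$ (which is a consequence of $\|\nabla_{xy}^2 g\| \leq C_{gxy}$ from Assumption~\ref{ass:2}(iii)) yields $L_y = C_{gxy}/\lambda$.

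For $\nabla F$: Using Proposition~\ref{prop:1}, write $\nabla F(\x) = \nabla_x f(\x,\y^*(\x)) - \nabla_{xy}^2 g(\x,\y^*(\x))\,[\nabla_{yy}^2 g(\x,\y^*(\x))]^{-1}\nabla_y f(\x,\y^*(\x))$. I would split $\|\nabla F(\x)-\nabla F(\x')\|$ into four terms via the triangle inequality, varying one of the factors $\nabla_x f$, $\nabla_{xy}^2 g$, $[\nabla_{yy}^2 g]^{-1}$, $\nabla_y f$ at a time. Each term is a product of three bounded quantities and one Lipschitz difference: boundedness of the Jacobian factor is $\|\nabla_{xy}^2 g\|\le C_{gxy}$, boundedness of the inverse Hessian is $\|[\nabla_{yy}^2 g]^{-1}\| \le 1/\lambda$ (by Assumption~\ref{ass:1}), boundedness of $\nabla_y f$ is $C_{fy}$, and the Lipschitz differences are controlled by $L_{fx}, L_{fy}, L_{gxy}, L_{gyy}$ applied to $\|(\x,\y^*(\x)) - (\x',\y^*(\x'))\| \leq (1+L_y)\|\x-\x'\|$ using the first part of the lemma.

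The main obstacle will be the term involving the difference of inverse Hessians, $[\nabla_{yy}^2 g(\x,\y^*(\x))]^{-1} - [\nabla_{yy}^2 g(\x',\y^*(\x'))]^{-1}$. I would handle it via the identity $A^{-1}-B^{-1} = A^{-1}(B-A)B^{-1}$, so that $\|A^{-1}-B^{-1}\| \leq \lambda^{-2}\|\nabla_{yy}^2 g(\x,\y^*(\x)) - \nabla_{yy}^2 g(\x',\y^*(\x'))\|$, and then apply the $L_{gyy}$ Lipschitz continuity combined with the bound $\|\y^*(\x)-\y^*(\x')\|\le L_y\|\x-\x'\|$ from the first step. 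Collecting all four contributions and simplifying yields a single Lipschitz constant $L_F$ expressible in terms of $C_{fy}, C_{gxy}, \lambda, L_{fx}, L_{fy}, L_{gxy}, L_{gyy}, L_y$. The multi-task case in Assumptions~\ref{ass:3}--\ref{ass:4} is handled identically, taking the constants task-wise and then averaging over $i$.
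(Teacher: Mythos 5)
Your proof is correct and is essentially the standard argument for this lemma: the paper itself gives no proof, citing Lemma 2.2 of \citep{99401}, and your two steps (strong monotonicity plus the first-order condition $\nabla_y g(\x,\y^*(\x))=0$ to get $L_y=C_{gxy}/\lambda$, then a four-term telescoping of the implicit-gradient formula with $A^{-1}-B^{-1}=A^{-1}(B-A)B^{-1}$ for the inverse-Hessian factor) reproduce exactly that reference's derivation under the assumptions available here. Nothing is missing.
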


\subsection{Proof of Lemma~\ref{lem:1}}
\begin{proof}
Due the smoothness of $F$, we can prove that under $\gamma\eta_t L_F\leq 1/2$
\begin{align*}
&F(\x_{t+1}) \leq F(\x_t) + \nabla F(\x_t)^{\top} (\x_{t+1} - \x_t) + \frac{L_F}{2}\|\x_{t+1} - \x_t\|^2\\
&= F(\x_t) - \gamma\eta_t\nabla F(\x_t)^{\top} \z_{t+1} + \frac{L_F\gamma^2\eta_t^2}{2}\|\z_{t+1}\|^2\\
&  = F(\x_t) +   \frac{ \gamma\eta_t}{2}\|\nabla F(\x_t) - \z_{t+1}\|^2- \frac{\gamma\eta_t}{2}\|\nabla F(\x_t)\|^2 + (\frac{L_F\gamma^2\eta_t^2}{2} - \frac{\gamma\eta_t}{2})\|\z_{t+1}\|^2\\
&  \leq F(\x_t) +   \frac{ \gamma\eta_t}{2}\|\nabla F(\x_t) - \z_{t+1}\|^2- \frac{\gamma\eta_t}{2}\|\nabla F(\x_t)\|^2  - \frac{\gamma\eta_t}{4}\|\z_{t+1}\|^2
\end{align*}
\end{proof}

\subsection{Proof of Lemma~\ref{lem:2}}
\begin{proof}
First, we have 
\begin{align*}
&\|\nabla F(\x_t, \y_t) - \nabla F(\x_t)\|^2 \\
&\leq \|\nabla_x f(\x_t, \y_t) - \nabla^2_{xy}g(\x_t, \y_t)[\nabla_{yy}^2g(\x_t, \y_t)]^{-1}\nabla_y f(\x_t, \y_t) \\
&- \nabla_x f(\x_t, \y^*(\x_t)) - \nabla^2_{xy}g(\x_t, \y^*(\x_t))[\nabla_{yy}^2g(\x_t, \y^*(\x_t))]^{-1}\nabla_y f(\x_t, \y^*(\x_t)) \|^2\\
& \leq 2\|\nabla_x f(\x_t, \y_t) -   \nabla_x f(\x_t, \y^*(\x_t))\|^2 \\
&+ 2\| \nabla^2_{xy}g(\x_t, \y_t)[\nabla_{yy}^2g(\x_t, \y_t)]^{-1}\nabla_y f(\x_t, \y_t) -  \nabla^2_{xy}g(\x_t, \y^*(\x_t))[\nabla_{yy}^2g(\x_t, \y^*(\x_t))]^{-1}\nabla_y f(\x_t, \y^*(\x_t))\|^2\\
&\overset{(a)}{\leq}2\|\nabla_x f(\x_t, \y_t) -   \nabla_x f(\x_t, \y^*(\x_t))\|^2\\
&  + 6\| \nabla^2_{xy}g(\x_t, \y_t)[\nabla_{yy}^2g(\x_t, \y_t)]^{-1}\nabla_y f(\x_t, \y_t) -  \nabla^2_{xy}g(\x_t, \y^*(\x_t))[\nabla_{yy}^2g(\x_t, \y_t)]^{-1}\nabla_y f(\x_t, \y_t)\|^2\\
& + 6\| \nabla^2_{xy}g(\x_t, \y^*(\x_t))[\nabla_{yy}^2g(\x_t, \y_t)]^{-1}\nabla_y f(\x_t, \y_t)-  \nabla^2_{xy}g(\x_t, \y^*(\x_t))[\nabla_{yy}^2g(\x_t, \y^*(\x_t))]^{-1}\nabla_y f(\x_t, \y_t)\|^2\\
& + 6\| \nabla^2_{xy}g(\x_t, \y^*(\x_t))[\nabla_{yy}^2g(\x_t, \y^*(\x_t))]^{-1}\nabla_y f(\x_t, \y_t)
-  \nabla^2_{xy}g(\x_t, \y^*(\x_t))[\nabla_{yy}^2g(\x_t,  \y^*(\x_t))]^{-1}\nabla_y f(\x_t, \y^*(\x_t))\|^2\\
&\overset{(a)}{\leq}2L_{fx}^2\|\y_t - \y^*(\x_t)\|^2 + \frac{6C_{fy}^2L_{gxy}^2}{\lambda^2}\|\y_t - \y^*(\x_t)\|^2 + \frac{6C_{fy}^2C^2_{gxy}L^2_{gyy}}{\lambda^4}\|\y_t - \y^*(\x_t)\|^2 + \frac{6L_{fy}^2C^2_{gxy}}{\lambda^2}\|\y_t - \y^*(\x_t)\|^2\\
& = \underbrace{ (2L_{fx}^2 +  \frac{6C_{fy}^2L_{gxy}^2}{\lambda^2} + \frac{6C_{fy}^2C^2_{gxy}L^2_{gyy}}{\lambda^4}+ \frac{6L_{fy}^2C^2_{gxy}}{\lambda^2})}\limits_{C_0}\|\y_t - \y^*(\x_t)\|^2, 
\end{align*}
where (a) uses the Lipschitz continuity of $\nabla_x f(\x, \cdot)$, $\nabla_y f(\x, \cdot)$, $\nabla_{xy}^2 g(\x, \cdot)$, $\nabla_{yy}^2 g(\x, \cdot)$, and upper bound of $\|\nabla_y f(\x, \y)\|$, $\|\nabla_{xy}^2 g(\x, \y)\|$, and $\|\nabla_{yy}^2 g(\x, \y)\|\succeq \lambda I $.  Next, 
\begin{align*}
&\|\u_{t+1} -V_{t+1}[H_{t+1}]^{-1}\v_{t+1}  - \nabla F(\x_t, \y_t)\|^2\\
& = \| \u_{t+1} -V_{t+1}[H_{t+1}]^{-1}\v_{t+1}-  \nabla_x f(\x_t, \y_t) - \nabla^2_{xy}g(\x_t, \y_t)[\nabla_{yy}^2g(\x_t, \y_t)]^{-1}\nabla_y f(\x_t, \y_t)\|^2\\
&\leq 2\|\u_{t+1} -  \nabla_x f(\x_t, \y_t) \|^2 + 2\|V_{t+1}[H_{t+1}]^{-1}\v_{t+1} - \nabla^2_{xy}g(\x_t, \y_t)[\nabla_{yy}^2g(\x_t, \y_t)]^{-1}\nabla_y f(\x_t, \y_t)\|^2\\
&\leq 2\|\u_{t+1} -  \nabla_x f(\x_t, \y_t) \|^2 \\
&+ 6\|V_{t+1}[H_{t+1}]^{-1}\v_{t+1} - \nabla^2_{xy}g(\x_t, \y_t)[H_{t+1}]^{-1}\v_{t+1}\|^2\\
&+ 6\| \nabla^2_{xy}g(\x_t, \y_t)[H_{t+1}]^{-1}\v_{t+1} - \nabla^2_{xy}g(\x_t, \y_t)[\nabla_{yy}^2g(\x_t, \y_t)]^{-1}\v_{t+1}\|^2\\
&+ 6\| \nabla^2_{xy}g(\x_t, \y_t)[\nabla_{yy}^2g(\x_t, \y_t)]^{-1}\v_{t+1} -  \nabla^2_{xy}g(\x_t, \y_t)[\nabla_{yy}^2g(\x_t, \y_t)]^{-1}\nabla_y f(\x_t, \y_{t})\|^2\\
&\leq 2\|\u_{t+1} -  \nabla_x f(\x_t, \y_t) \|^2 
+ \frac{6C_{fy}^2}{\lambda^2}\|V_{t+1} - \nabla^2_{xy}g(\x_t, \y_t)\|^2  \\
&~~~
+ \frac{6C_{fy}^2C_{gxy}^2}{\lambda^4}\|H_{t+1} - \nabla_{yy}^2g(\x_t, \y_t)\|^2  +  \frac{6C_{gxy}^2}{\lambda^2}\|\v_{t+1} -\nabla_y f(\x_t, \y_t) \|^2 \\ 
&\leq C_1\|\u_{t+1} -  \nabla_x f(\x_t, \y_t) \|^2 
+ C_2\|V_{t+1} - \nabla^2_{xy}g(\x_t, \y_t)\|^2 \\ 
&~~~ +C_3\|H_{t+1} - \nabla_{yy}^2g(\x_t, \y_t)\|^2 +C_4\|\v_{t+1} -\nabla_y f(\x_t, \y_t) \|^2,
\end{align*}
where $C_1 = 2$, $C_2 = \frac{6C_{fy}^2}{\lambda^2}$, $C_3 = \frac{6C_{fy}^2C_{gxy}^2}{\lambda^4}$ and $C_4 = \frac{6C_{gxy}^2}{\lambda^2}$. 
As a result, 
\begin{align*}
&\|\u_{t+1} -V_{t+1}[H_{t+1}]^{-1}\v_{t+1}  - \nabla F(\x_t)\|^2\leq 2\|\u_{t+1} -V_{t+1}[H_{t+1}]^{-1}\v_{t+1}  - \nabla F(\x_t, \y_t)\|^2 \\
&+ 2\| \nabla F(\x_t, \y_t) - \nabla F(\x_t)\|^2\leq 2\|\u_{t+1} -V_{t+1}[H_{t+1}]^{-1}\v_{t+1}  - \nabla F(\x_t, \y_t)\|^2  + 2C_1\|\y^*(\x_t) - \y_t\|^2\\
&\leq 2C_0\|\y_t - \y^*(\x_t)\|^2 + 2C_1\|\u_{t+1} -  \nabla_x f(\x_t, \y_t) \|^2  + 2C_2\|V_{t+1} - \nabla^2_{xy}g(\x_t, \y_t)\|^2 \\
&~~~ +2C_3\|H_{t+1} - \nabla_{yy}^2g(\x_t, \y_t)\|^2 +2C_4\|\v_{t+1} -\nabla_y f(\x_t, \y_t) \|^2. 
\end{align*} 
\end{proof}

\subsection{Proof of Lemma~\ref{lem:3}}
\begin{proof}
Define $\widetilde\y_{t+1}=\y_t -\tau \w_{t+1}$. $\y_{t+1}  = \y_t + \tau_t (\widetilde\y_{t+1} - \y_t)$
\begin{equation*}
\begin{aligned}
&\|\y_{t+1}- \y^*(\x_t) \|^2\leq \|\y_t - \tau_t(\y_t -  \widetilde\y_{t+1}) - \y^*(\x_t)\|^2 \\
& = \|\y_t - \y^*(\x_t)\|^2 + \tau_t^2 \|\y_t -  \widetilde\y_{t+1}\|^2 +  2\tau_t(\widetilde\y_{t+1} - \y_t)^{\top}(\y_t -  \y^*(\x_t)). 
\end{aligned}
\end{equation*}
As a result, 
\begin{align}\label{eqn:yy}
(\widetilde\y_{t+1} - \y_t)^{\top}(\y_t -  \y^*(\x_t))\!\geq\! \frac{1}{2\tau_t}(\|\y_{t+1}- \y^*(\x_t) \|^2 \!-\!  \|\y_t - \y^*(\x_t)\|^2 \!-\! \tau_t^2 \|\y_t -  \widetilde\y_{t+1}\|^2).
\end{align} 

Due to smoothness of $g$, we have
\begin{align*}
g(\x_t, \widetilde\y_{t+1})\leq g(\x_t, \y_t)+ \nabla_y g(\x_t, \y_t)^{\top}(\widetilde\y_{t+1}- \y_t) + \frac{L_g}{2}\|\widetilde\y_{t+1} - \y_t\|^2
\end{align*}
Hence 
\begin{align*}
g(\x_t, \widetilde\y_{t+1}) -  \nabla_y g(\x_t, \y_t)^{\top}(\widetilde\y_{t+1}- \y_t) -  \frac{L_g}{2}\|\widetilde\y_{t+1} - \y_t\|^2
\leq g(\x_t, \y_t)\end{align*}
Due to strong convexity of $g$, we have
\begin{align*}
&g(\x_t, \y)\geq g(\x_t, \y_t) + \nabla_y g(\x_t, \y_t)^{\top}(\y - \y_t) + \frac{\lambda}{2}\|\y - \y_t\|^2\\
& = g(\x_t, \y_t) + \nabla_y g(\x_t, \y_t)^{\top}(\y -\widetilde\y_{t+1}) + \nabla_y g(\x_t, \y_t)^{\top}(\widetilde\y_{t+1} - \y_t) + \frac{\lambda}{2}\|\y - \y_t\|^2\\
&= g(\x_t, \y_t) + \w_{t+1}^{\top}(\y - \widetilde\y_{t+1})+ (\nabla_y g(\x_t, \y_t) -\w_{t+1})^{\top}(\y - \widetilde\y_{t+1})\\
&  + \nabla_y g(\x_t, \y_t)^{\top}(\widetilde\y_{t+1} - \y_t)  + \frac{\lambda}{2}\|\y - \y_t\|^2
\end{align*}
Combining the above inequalities we have
\begin{equation}
\begin{split} 
&g(\x_t, \y)\geq g(\x_t, \y_t) + \w_{t+1}^{\top}(\y -  \widetilde\y_{t+1})+ (\nabla_y g(\x_t, \y_t) -\w_{t+1})^{\top}(\y -  \widetilde\y_{t+1}) \\
&+  \nabla_y g(\x_t, \y_t)^{\top}(\widetilde\y_{t+1} - \y_t) + \frac{\lambda}{2}\|\y - \y_t\|^2\\
&= g(\x_t, \widetilde\y_{t+1})  + \w_{t+1}^{\top}(\y - \widetilde\y_{t+1}) + (\nabla_y g(\x_t, \y_t) -\w_{t+1})^{\top}(\y -  \widetilde\y_{t+1}) \\
&~~~ + \frac{\lambda}{2}\|\y - \y_t\|^2 -    \frac{L_g}{2}\| \widetilde\y_{t+1} - \y_t\|^2. 
\label{equ:local:proj_pause} 
\end{split}
\end{equation}

Note that $\tilde{\y}_{t+1} = \Pi_{\mathcal{Y}} (\y_t - \tau \w_{t+1}) = \arg\min\limits_{\y\in\mathcal{Y}} \frac{1}{2}\|\y-\y_t + \tau\w_{t+1}\|^2$. Since $\mathcal{Y}$ is a convex set and the function $\frac{1}{2}\|\y - \y_t + \tau \w_{t+1}\|^2$ is convex in $\y$, according to the first order optimality condition of convex function, we have
\begin{equation}
\begin{split}
(\tilde{\y}_{t+1} - \y_t + \tau\w_{t+1})^\top (\y-\widetilde{\y}_{t+1}) \geq 0, \forall \y\in \mathcal{Y}.
\end{split} 
\end{equation} 

Then we obtain
\begin{equation}
\begin{split}
\w_{t+1}^\top (\y-\widetilde{\y}_{t+1}) 
&\geq \frac{1}{\tau}(\y_t-\widetilde{\y}_{t+1})^\top (\y-\widetilde{\y}_{t+1}) \\
&=\frac{1}{\tau} (\y_t-\widetilde{\y}_{t+1} )^\top (\y_t - \widetilde{\y}_{t+1}) + \frac{1}{\tau} (\y_t-\widetilde{\y}_{t+1})^\top (\y - \y_t)  \\    
&= \frac{1}{\tau} \|\y_t - \widetilde{\y}_{t+1}\|^2 + \frac{1}{\tau} (\y_t-\widetilde{\y}_{t+1})^\top (\y-\y_t).
\end{split} 
\label{equ:local:proj_handle}
\end{equation} 

Combining (\ref{equ:local:proj_pause}) and (\ref{equ:local:proj_handle}),
\begin{align*}
g(\x_t, \y)\geq & g(\x_t,  \widetilde\y_{t+1})  + \frac{1}{\tau} \|\y_t - \widetilde\y_{t+1}\|^2 + \frac{1}{\tau}( \y_t -  \widetilde\y_{t+1})^{\top}(\y - \y_{t}) \\
&+ (\nabla_y g(\x_t, \y_t) -\w_{t+1})^{\top}(\y -  \widetilde\y_{t+1})
+ \frac{\lambda}{2}\|\y - \y_t\|^2   -  \frac{L_g}{2}\| \widetilde\y_{t+1}- \y_t\|^2. 
\end{align*}

Thus,
\begin{align*}
&g(\x_t ,  \widetilde\y_{t+1})\geq g(\x_t, \y^*(\x_t))\\
&\geq g(\x_t,  \widetilde\y_{t+1})  + \frac{1}{\tau} (\y_t -  \widetilde \y_{t+1})^{\top}(\y^*(\x_t) - \y_{t}) + (\nabla_y g(\x_t, \y_t) -\w_{t+1})^{\top}(\y^*(\x_t)-  \widetilde\y_{t+1}) \\
&~~~ + \frac{\lambda}{2}\|\y^*(\x_t) - \y_t\|^2   -  \frac{L_g}{2}\|\widetilde\y_{t+1}- \y_t\|^2 + \frac{1}{\tau}\|\y_t -  \widetilde\y_{t+1}\|^2\\
&\geq g(\x_t,  \widetilde\y_{t+1}) +\frac{ 1}{\tau} (\y_t -  \widetilde\y_{t+1})^{\top}( \y^*(\x_t) - \y_t)\\
&~~~ -  \frac{2}{\lambda}\|\nabla_y g(\x_t, \y_t) -\w_{t+1}\|^2 - \frac{\lambda}{4}\| \y^*(\x_t) - \y_t\|^2  - \frac{\lambda}{4}\|  \widetilde\y_{t+1}- \y_t\|^2\\
&~~~ + \frac{\lambda}{2}\| \y^*(\x_t) - \y_t\|^2 -  \frac{L_g}{2}\|\widetilde\y_{t+1} - \y_t\|^2+ \frac{1}{\tau}\|\y_t - \widetilde\y_{t+1}\|^2\\
&\geq g(\x_t,  \widetilde\y_{t+1}) +\frac{ 1}{2\tau_t\tau}(\|\y_{t+1}- \y^*(\x_t) \|^2 - \|\y_t - \y^*(\x_t)\|^2 -\tau_t^2 \|\y_t -  \widetilde\y_{t+1}\|^2 ) -  \frac{2}{\lambda}\|\nabla_y g(\x_t, \y_t) -\w_{t+1}\|^2\\
&~~~ + \frac{\lambda}{4}\| \y^*(\x_t) - \y_t\|^2 -  \frac{L_g}{2}\|\widetilde\y_{t+1} - \y_t\|^2+ \frac{1}{\tau}\|\y_t - \widetilde\y_{t+1}\|^2 - \frac{\lambda}{4}\|  \widetilde\y_{t+1}- \y_t\|^2.
\end{align*}
Hence we have
\begin{align*}
&\|\y_{t+1}- \y^*(\x_t) \|^2 \leq (1 - \frac{\tau_t\tau \lambda}{2})\|\y_t - \y^*(\x_t)\|^2 + \frac{4\tau_t\tau}{\lambda}\|\nabla_y g(\x_t, \y_t) -\w_{t+1}\|^2 \\ 
&  - 2\tau_t \tau(\frac{1}{\tau} - \frac{\tau_t}{\tau} - \frac{L_g}{2} - \frac{\lambda}{4 })\|\y_t-\widetilde\y_{t+1}\|^2\\
& \leq (1 - \frac{\tau_t\tau \lambda}{2})\|\y_t - \y^*(\x_t)\|^2 + \frac{4\tau_t\tau}{\lambda}\|\nabla_y g(\x_t, \y_t) -\w_{t+1}\|^2 \\
&  - 2\tau_t \tau(\frac{1}{2\tau} - \frac{3L_g}{4} )\|\y_t- \widetilde\y_{t+1}\|^2
\end{align*}
where we use $\tau_t\leq 1/2, \lambda\leq L_g$ and $\tau\leq 2/(3L_g)$. 
As a result, 
\begin{align*}
&\|\y_{t+1} - \y^*(\x_{t+1})\|^2 = \|\y_{t+1}- \y^*(\x_t) + \y^*(\x_t) - \y^*(\x_{t+1})\|^2\\
& \leq (1+ \frac{\tau_t\tau \lambda}{4})\|\y_{t+1}- \y^*(\x_t) \|^2 + (1+ \frac{4}{\tau_t\tau \lambda})\|\y^*(\x_t) - \y^*(\x_{t+1})\|^2\\
& \leq (1+ \frac{\tau_t\tau \lambda}{4})\|\y_{t+1}- \y^*(\x_t) \|^2 + (1+\frac{4}{\tau_t\tau \lambda})L_y^2\|\x_t - \x_{t+1}\|^2\\ 
& \leq  (1 - \frac{\tau_t\tau \lambda}{4})\|\y_t - \y^*(\x_t)\|^2 + \frac{8\tau_t\tau}{\lambda}\|\nabla_y g(\x_t, \y_t) -\w_{t+1}\|^2  +  \frac{8L_y^2}{\tau_t\tau\lambda}\|\x_t - \x_{t+1}\|^2\\
&~~~ -  (1+ \frac{\tau_t\tau \lambda}{4})2\tau_t \tau(\frac{1}{2\tau} - \frac{3L_g}{4} )\|\y_t- \widetilde\y_{t+1}\|^2\\
& \leq  (1 - \frac{\tau_t\tau \lambda}{4})\|\y_t - \y^*(\x_t)\|^2 + \frac{8\tau_t\tau}{\lambda}\|\nabla_y g(\x_t, \y_t) -\w_{t+1}\|^2  + \frac{8L_y^2}{\tau_t\tau\lambda}\|\x_t - \x_{t+1}\|^2\\
&~~~ -  (1+ \frac{\tau_t\tau \lambda}{4})\frac{2\tau}{\tau_t}(\frac{1}{2\tau} - \frac{3L_g}{4} )\|\y_t- \y_{t+1}\|^2
\end{align*}
where we use $(1+\epsilon/2)(1-\epsilon)\leq (1 - \epsilon/2 -  \epsilon^2/2)\leq 1 - \epsilon/2$.

\end{proof}

\subsection{Proof of Lemma~\ref{lem:2-2}}
\begin{align*}
&\|\d_{t+1}  - \nabla F(\x_t)\|^2\leq \|\d_{t+1} - \frac{1}{m}\sum_{i=1}^m\z_{i,t+1} + \frac{1}{m}\sum_{i=1}^m\z_{i,t+1}   - \nabla F(\x_t)\|^2 \\
&\leq 2\|\d_{t+1} - \frac{1}{m}\sum_{i=1}^m\z_{i,t+1}\|^2 + 2\| \frac{1}{m}\sum_{i=1}^m(\z_{i,t+1}   - \nabla F_i(\x_t))\|^2\\
&\leq 2\|\d_{t+1} - \frac{1}{m}\sum_{i=1}^m\z_{i,t+1}\|^2 + 2\frac{1}{m}\sum_{i=1}^m\| \z_{i,t+1}   - \nabla F_i(\x_t)\|^2\\ 
&\leq 2\|\d_{t+1} -  \frac{1}{m}\sum_{i=1}^m\z_{i,t+1}\|^2 
+\frac{1}{m}\sum_{i=1}^m4C_0\|\y_{i,t} - \y_i^*(\x_{t})\|^2\\ 
&~~~+  \frac{1}{m}\sum_{i=1}^m(4C_1\|\u_{i,t+1} -  \nabla_x f_i(\x_t, \y_{i,t}) \|^2+4C_2\|V_{i,t+1} - \nabla^2_{xy}g_i(\x_t, \y_{i,t})\|^2 ) \\
& ~~~ +\frac{1}{m}\sum_{i=1}^m (4C_3\|H_{i,t+1} - \nabla_{yy}^2g_i(\x_t, \y_{i,t})\|^2 +4C_4\|\v_{i,t+1} -\nabla_y f_i(\x_t, \y_{i,t}) \|^2), 
\end{align*} 
where the last inequality can be proven as in Lemma \ref{lem:2} with 
$C_0=\max\limits_{1\leq i \leq m} (2L_{fx,i}^2 +  \frac{6C_{fy,i}^2L_{gxy,i}^2}{\lambda^2} + \frac{6C_{fy,i}^2C^2_{gxy,i}L^2_{gyy,i}}{\lambda^4}+ \frac{6L_{fy,i}^2C^2_{gxy,i}}{\lambda^2})$, $C_1 = 2$, $C_2 =\max\limits_{1\leq i \leq m} ( \frac{6C_{fy,i}^2}{\lambda^2})$, $C_3 = \max\limits_{1\leq i \leq m} (\frac{6C_{fy,i}^2C_{gxy,i}^2}{\lambda^4})$ and $C_4 = \max\limits_{1\leq i \leq m} (\frac{6C_{gxy,i}^2}{\lambda^2})$. 

\subsection{Proof of Lemma~\ref{lem:11}}
\begin{proof}
\begin{align*}
&\E_{j_t}[\|\d_{t+1} - \frac{1}{m} \sum_{i=1}^m \z_{i,t+1})\|^2 ]\\
&=\E_{j_t}[\bigg\| (1-\beta) (\d_t -  \frac{1}{m} \sum_{i=1}^m \z_{i,t}) + \beta(\z_{j_t,t+1} - \frac{1}{m} \sum_{i=1}^m \z_{i,t+1}) )\\
& + (1-\beta)(\z_{j_t, t+1} - \z_{j_t, t} - \frac{1}{m} \sum_{i=1}^m \z_{i,t+1} + \frac{1}{m} \sum_{i=1}^m \z_{i,t}) \bigg\|^2]\\
& \leq (1-\beta)^2 \|\d_{t} - \frac{1}{m} \sum_{i=1}^m \z_{i,t}\|^2  + 2\beta^2\sigma_z^2 + 4 (1-\beta)^2\E_{i_t}[\|\z_{j_t, t+1} - \z_{j_t, t}\|^2]\\
& \leq (1-\beta)^2 \|\d_{t} - \frac{1}{m} \sum_{i=1}^m \z_{i,t}\|^2  + 2\beta^2\sigma_z^2\\
&~~~ + 4(1-\beta)^2\E_{j_t}[\|\u_{j_t, t+1} - V_{j_t, t+1}H^{-1}_{j_t, t+1}\v_{j_t, t+1} - \u_{j_t, t} + V_{j_t, t}H^{-1}_{j_t, t}\v_{j_t, t}\|^2]\\
& \leq (1-\beta)^2 \|\d_{t} - \frac{1}{m} \sum_{i=1}^m \z_{i,t}\|^2  + 2\beta^2\sigma_z^2\\
&~~~ + C'_1\E_{j_t}[\|\u_{j_t, t+1} - \u_{j_t, t}\|^2]+ C'_2\E_{j_t}[\|V_{j_t, t+1} - V_{j_t, t}\|^2] \\
&~~~ + C'_3\E_{j_t}[\|H_{j_t, t+1} - H_{j_t, t}\|^2] +  C_4'\E_{j_t}[\|\v_{j_t, t+1} - \v_{j_t, t}\|^2],
\end{align*}
where $C_1' = 8$, $C_2'=\max\limits_{1\leq i\leq m}\frac{24C^2_{fy,i}}{\lambda}$, $C_3' = \max\limits_{1\leq i\leq m} \frac{24 C^2_{gxy,i}C^2_{fy,i}}{\lambda^4} $, $C_4'=\max\limits_{1\leq i\leq m} \frac{24C^2_{gxy,i}}{\lambda^2}$.
\end{proof}

\subsection{Proof of Lemma~\ref{lem:12}}
\begin{proof}
\begin{align*}
&\E_{\xi_t, i_t}[\|\u_{t+1}-\u_t\|^2]   \leq  \E_{\xi_t, i_t}\|-\beta_t(\u_t - \widetilde h(\e_t; \xi_t; i_t)) + (1-\beta)(\widetilde h(\e_t; \xi_t; i_t) - \widetilde h(\e_{t-1}; \xi_t; i_t))\|^2\\
& \leq  2\beta_t^2 \E_{\xi_t, i_t} \|\u_t - \widetilde h(\e_t; \xi_t; i_t)\|^2 + 2(1-\beta_t)^2L(\p)\|\e_t - \e_{t-1}\|^2\\
& \leq  2\beta_t^2 \E_{\xi_t, i_t} \|\u_t  - h(\e_{t-1}) + h(\e_{t-1})- h(\e_t) + h(\e_t)- \widetilde h(\e_t; \xi_t; i_t)\|^2 + 2(1-\beta_t)^2L(\p)\|\e_t - \e_{t-1}\|^2\\
& \leq  4\beta_t^2\|\u_t  - h(\e_{t-1})\|^2 + 4\beta_t^2\|h(\e_{t-1})- h(\e_t)\|^2 + 2\beta_t^2C(\p)+ 2(1-\beta_t)^2L(\p)\|\e_t - \e_{t-1}\|^2\\
& \leq  4\beta_t^2\delta_{t-1}^2+ 4\beta_t^2\sum_{i}L_i\|\e_{t-1} - \e_t\|^2 + 2\beta_t^2G(\p)+  2(1-\beta_t)^2L(\p)\|\e_t - \e_{t-1}\|^2\\
& \leq  4\beta_t^2\delta_{t-1}^2 + 2\beta_t^2G(\p)+ 4(L(\p)+\beta_t^2 \sum_iL_i)\|\e_t - \e_{t-1}\|^2. 
\end{align*}
\end{proof}

\section{Preliminary Experiments}
In this section, we present some preliminary experiments to verify the effectiveness of the proposed algorithms. 
\subsection{One Lower-level Problem}
We consider an application of hyper-parameter optimization in machine learning (ML). A potential problem in ML is that the distribution of training examples might not be the same as the testing data. In this case, it could be beneficial to assign non-uniform weights to training examples to account for the difference between training set and testing set. The non-uniform weights will be learned to minimize the error on a separate validation data. 
In particular, we consider the following formulation: 
\begin{equation*}
\min_{\p}F(\p):=f(\p,\w^*(\p)):=\frac{1}{\left|\D_{va} \right|}\sum_{\xi_j\in \D_{va}} \ell(\w^*(\p);\xi_j)
\end{equation*}
\begin{equation*}
\w^*(\p)\in \arg \min_{\w} g(\p,\w):= \frac{1}{\left|\D_{tr} \right|} \sum_{\zeta_j \in \D_{tr}} \p_j \ell(\w;\zeta_j)+\frac{\lambda}{2}\|\w\|^2
\end{equation*}
where $\D_{tr}$ and $\D_{va}$ are the training and validation dataset, and $\ell(\w;\xi)$ is the logistic loss for clasification. To evaluate the algorithms, we consider two UCI Adult benchmark datasets (a1a, a8a) and two web page classification datasets (w1a, w8a) ~\citep{Dua2019,Platt99}. The prediction task of the Adult datasets is to determine whether a person makes over 50K a year based on $123$ binary attributes. Here a1a and a8a stand for different training and testing separation of the same data set. The webpage classification task is a text categorization problem, of which each input contains $300$ sparse binary keyword attributes extracted from each web page. Note that w1a and w8a differ in the similar way to a1a and a8a.

For our experiment, we use the testing test of each datas as the validation data $\D_{va}$ for evaluating the convergence speed of different algorithms. For all algorithms, we use two step sizes, one  step size $\eta_t$ updating the main variable $\x$ and another step size $\tau_t$ for updating the variable $\y$ for the lower problem. For all baselines except for STABLE, we set $\eta_t = (t+c_0)^{-1/2}$ and $\tau_t = \tau_0\eta_t^2$,  for STABLE, we use $\tau_t = \tau_0\eta_t$. The reason is that STABLE is the single time-scale algorithm.  For SVRB, we use step sizes $\eta_t =\tau_t = (t+c_0)^{-1/3}$ and use the same parameter $\beta_t = \beta\eta_t^2$ for updating all gradient estimators. The constants $c_0$, $\tau_0$ and $\beta$ are tuned in a range. For SVRB and STABLE, the Hessian inverse times a vector is implemented by the conjugated gradient algorithm. 

We report the results in Figure~\ref{fig:onelevel}, where the top panel shows the validation loss (objective) vs the number of samples  and the bottom panel shows validation loss (objective) vs running time. We can see tht the proposed SVRB achieves the fastest convergence rate among all algorithms in terms of running time and sample complexity.

\begin{figure}
    \centering
    \begin{subfigure}[b]{0.24\textwidth}
        \includegraphics[width=\textwidth]{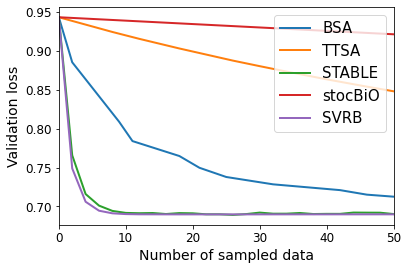}
        \includegraphics[width=\textwidth]{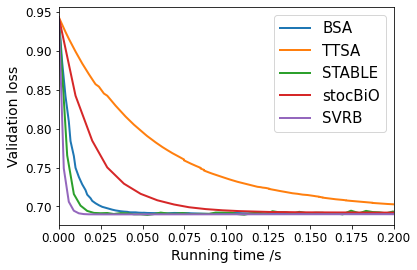}
        \caption{Dataset: a1a}
    \end{subfigure}
    \begin{subfigure}[b]{0.24\textwidth}
        \includegraphics[width=\textwidth]{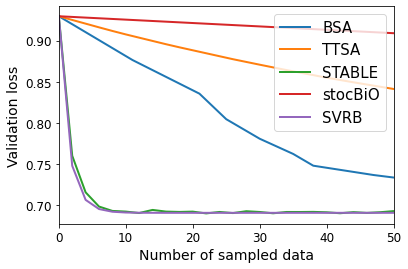}
        \includegraphics[width=\textwidth]{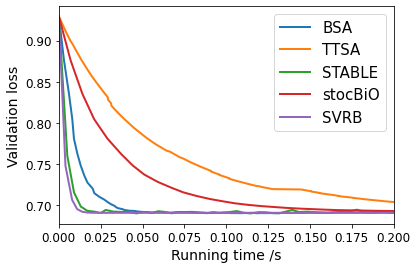}
        \caption{Dataset: a8a}
    \end{subfigure}
    \begin{subfigure}[b]{0.24\textwidth}
        \includegraphics[width=\textwidth]{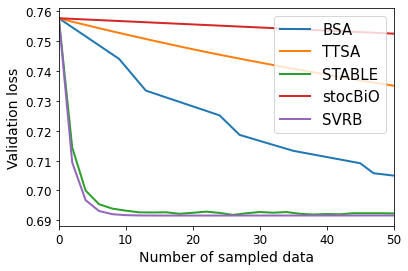}
        \includegraphics[width=\textwidth]{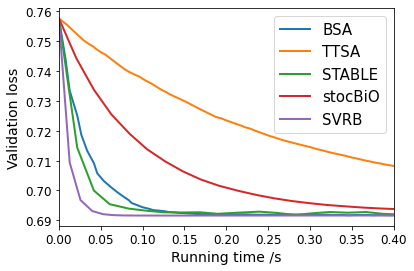}
        \caption{Dataset: w1a}
    \end{subfigure}
    \begin{subfigure}[b]{0.24\textwidth}
        \includegraphics[width=\textwidth]{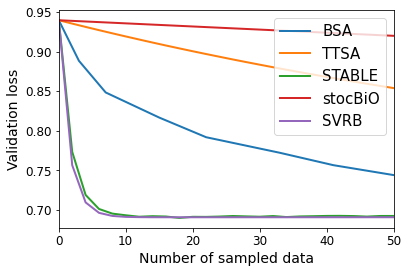}
        \includegraphics[width=\textwidth]{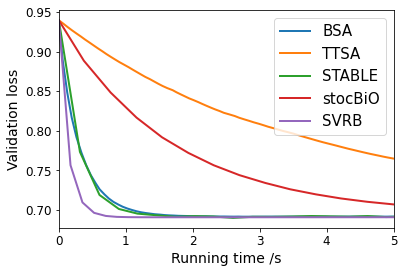}
        \caption{Dataset: w8a}
    \end{subfigure}
     \caption{Comparison of various algorithms on logistic loss problem over UCI Adult datasets and webpage classification task. For each data set, an objective function value v.s. running time plot and an objective function value v.s. sample complexity plot are given.}
     \label{fig:onelevel}
\end{figure}

\subsection{Many Lower-level Problems}
To conduct an experiment on the proposed algorithm RE-RSVRB for SBO with many lower-level problems, we consider an extension of hyper-parameter optimization with many different loss functions: 
\begin{equation*}
\min_{\p}F(\p):=\sum_{i=1}^m f(\p,\w_i^*(\p)):=\frac{1}{\left|\D_{va} \right|}\sum_{i=1}^m\sum_{\xi_j\in \D_{va}} \mathcal{L}_i(\w_i^*(\p);\xi_j)
\end{equation*}
\begin{equation*}
\w_i^*(\p)\in \arg \min_\w g_i(\p,\w):= \frac{1}{\left|\D_{tr} \right|} \sum_{\zeta_j \in \D_{tr}} \p_j \mathcal{L}_i(\w;\zeta_j)+\frac{\lambda}{2}\|\w\|^2,\quad i=1,\dots,m
\end{equation*}
where the loss function is defined as
\begin{equation*}
    \mathcal{L}_i(\w;x,y):=\log\left(1+\exp\left(\frac{-y (\w^Tx+w_0)}{\sigma_i}\right)\right).
\end{equation*}
where $\sigma_i$ is a temperature parameter. 
The step sizes are chosen in the similar way to the experiment for SBO with one lower-level problem. 

For RE-RSVRB, we sample multiple lower problems (10 tasks) at each iteration for updating the model. 
We test the baselines and RE-RSVRB on two datasets, namely a1a and w1a. To control the time spent on the objective function value evaluations, we only choose $100$ data points from the testing part of each dataset as the validation data. Note that the size of validation data has minimal impact on the performance of the algorithms. The starting step size is tuned between $1/1000$ and $1/10$, and the constant $\tau$ in the update of $\y_i$ is chosen from $\{0.1, 0.2,0.3,0.4,0.5,0.6,0.7,0.8,0.9,1\}$. The constants $\sigma_i$ in the loss function $\mathcal{L}_i$ are randomly generated from the interval $[1,11]$ and fixed. We use D-m as a shorthand of data set D with $m$ lower-level problems. It can be seen from Figure~\ref{fig:manylevel} that RE-RSVRB outperforms the baselines in most cases.
\begin{figure}
    \centering
    \begin{subfigure}[b]{0.24\textwidth}
        \includegraphics[width=\textwidth]{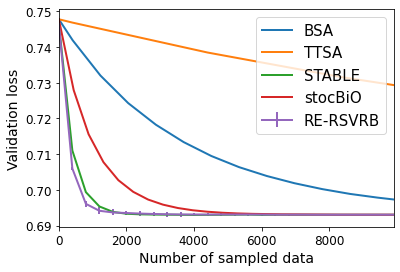}
        \includegraphics[width=\textwidth]{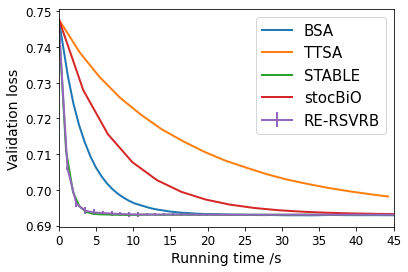}
        \caption{a1a-200}
    \end{subfigure} 
    \begin{subfigure}[b]{0.24\textwidth}
        \includegraphics[width=\textwidth]{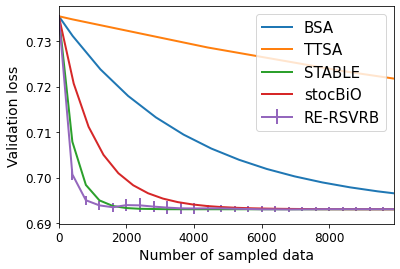}
        \includegraphics[width=\textwidth]{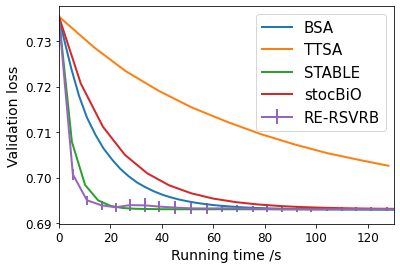}
        \caption{w1a-200}
    \end{subfigure}
    \begin{subfigure}[b]{0.24\textwidth}
        \includegraphics[width=\textwidth]{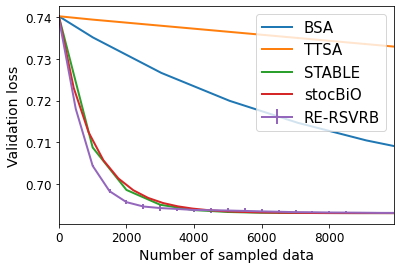}
        \includegraphics[width=\textwidth]{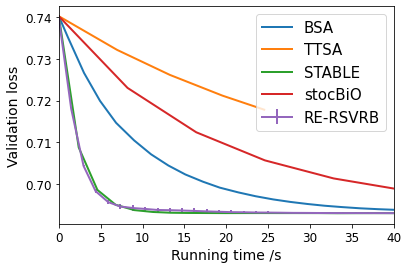}
        \caption{a1a-500}
    \end{subfigure}
    \begin{subfigure}[b]{0.24\textwidth}
        \includegraphics[width=\textwidth]{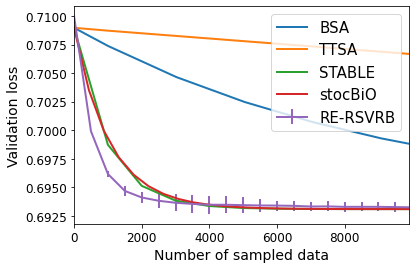}
        \includegraphics[width=\textwidth]{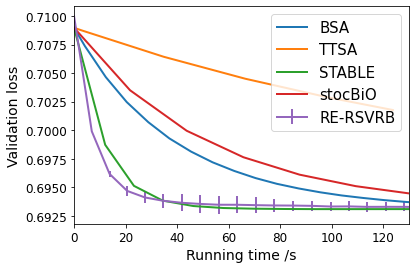}
        \caption{w1a-500}
    \end{subfigure}
    \caption{Comparison of various algorithms on SBO with $200$ or $500$ lower-level problems over UCI Adult data set and web classification task. For each setting, an objective function value v.s. running time plot and an objective function value v.s. sample complexity plot are given.}
    \label{fig:manylevel}
\end{figure}
\end{document}